\frenchspacing  \linespread{1.1}
\newtheorem{remark}{Remark}[section]
\theoremstyle{plain}
\newtheorem{lemma}{Lemma}[section]
\newtheorem{proposition}{Proposition}[section]
\newtheorem{theorem}{Theorem}[section]
\newtheorem{definition}{Definition}[section]
\newtheorem{corollary}{Corollary}[section]
\newtheorem{example}{Example}[section]
\newcommand{\s}{\mathfrak s}
\newcommand{\beqn}{\begin{eqnarray}}
\newcommand{\eeqn}{\end{eqnarray}}
\newcommand{\beq}{\begin{eqnarray}}
\newcommand{\eeq}{\end{eqnarray}}
\newcommand{\bpro}{\begin{proposition}}
\newcommand{\epro}{\end{proposition}}
\newcommand{\blem}{\begin{lemma}}
\newcommand{\elem}{\end{lemma}}
\newcommand{\bdfn}{\begin{definition}}
\newcommand{\edfn}{\end{definition}}
\newcommand{\bcor}{\begin{corollary}}
\newcommand{\ecor}{\end{corollary}}
\newcommand{\bthm}{\begin{theorem}}
\newcommand{\ethm}{\end{theorem}}
\newcommand{\bex}{\begin{example}}
\newcommand{\eex}{\end{example}}
\newcommand{\brmq}{\begin{remark}}
\newcommand{\ermq}{\end{remark}}
\newcommand{\benum}{\begin{enumerate}}
\newcommand{\eenum}{\end{enumerate}}
\newcommand{\bitem}{\begin{itemize}}
\newcommand{\eitem}{\end{itemize}}
\theoremstyle{plain}
\title{On systems of commuting matrices, Frobenius Lie algebras and Gerstenhaber's Theorem}
  \author{ Andr\'e Diatta$^{( 1)}$ 
;  Bakary Manga$^{( 2)}$
 and  
Ameth Mbaye$^{( 2)}$  \footnote{ \footnotesize \noindent  (1) Aix-Marseille Univ, CNRS, Centrale Marseille, Institut Fresnel, 13013 Marseille, France.
\newline Email: andre.diatta@fresnel.fr; andrediatta@gmail.com.
\newline
 (2) D\'epartement de Math\'ematiques et Informatique,
Universit\'e Cheikh Anta Diop de Dakar,
BP 5005 Dakar-Fann, Dakar, S\'en\'egal. Email: bakary.manga@ucad.edu.sn; ameth3.mbaye@ucad.edu.sn; sombaye100@yahoo.fr
} 
  }  
\begin{document}

\maketitle

\begin{abstract} This work relates to three problems, the classification of maximal Abelian subalgebras (MASAs) of  the  Lie algebra of square matrices, the classification of $2$-step solvable Frobenius Lie algebras and the Gerstenhaber's Theorem.
Let $M$ and $N$ be two commuting square matrices of order $n$ 
with entries in an algebraically closed field $\mathbb K$. Then the associative commutative
$\mathbb K$-algebra, they generate, is of dimension at most $n.$ 
This result was proved by Murray Gerstenhaber in 1961.
The analog of this property for three commuting matrices is still an open problem, 
its version for a higher number of commuting matrices is not true in general.
In the present paper, we give a sufficient condition for this property to be satisfied,
for any number of commuting matrices and arbitrary field $\mathbb K$. Such a result is derived from a discussion on the structure of 2-step solvable Frobenius Lie algebras and a complete characterization of their associated left symmetric algebra structure.
We discuss the classification of $2$-step solvable Frobenius Lie algebras and show that it is equivalent to that of n-dimensional MASAs of the  Lie algebra of square matrices, admitting an open orbit for the contragradient action associated to the multiplication of matrices and vectors.  Numerous examples are discussed in any dimension and a  complete classification list is supplied in low dimensions. Furthermore, in any finite dimension, we give a full classification of all $2$-step solvable Frobenius Lie algebras corresponding to nonderogatory matrices. 
\end{abstract}

\section{Introduction}
Let  $\mathbb K$ be a field, considered here to have characteristic zero. In the $\mathbb K$-vector space $\mathcal M(n,\mathbb K)$ of all $n\times n$ matrices with entries in $\mathbb K,$ we consider the following two algebraic structures: (1) the $\mathbb K$-algebra structure given by the ordinary multiplication and addition of matrices and (2) the Lie algebra structure corresponding to the Lie bracket $[,]$ given by the commutator $[M,N]:=MN-NM$ of matrices $M,N\in \mathcal M(n,\mathbb K)$. We will let $\mathfrak{gl}(n,\mathbb K)$ stand for $\mathcal M(n,\mathbb K)$ together with the latter Lie algebra structure.
Let $\{M_1,\dots, M_k\}$ be a set of $k$ pairwise commuting elements of  $\mathcal M(n,\mathbb K)$. On the one hand, we consider the commutative associative $\mathbb K$-subalgebra $\mathbb K[M_1,\dots, M_k]$  of  $\mathcal M(n,\mathbb K)$ made of all matrices of the form $p_k(M_1,\dots,M_k)$, for all polynomials $p_k(X_1,\dots,X_k)$ in $k$ variables. On the other hand, we denote by $\mathfrak{B}(M_1,\dots,M_k)$  the  vector subspace of   $\mathcal M(n,\mathbb K)$ spanned by $\{M_1,\dots, M_k\}$. If the $M_i$'s are linearly independent, it obviously simply reads $\mathfrak{B}(M_1,\dots,M_k)=\mathbb KM_1\oplus\dots\oplus \mathbb KM_k$, where $\mathbb K M_i$ is the line of matrices of the form $\lambda M_i$, with $\lambda\in\mathbb K$.
 Of course, $\mathfrak{B}(M_1,\dots,M_k)$ is an Abelian Lie subalgebra of $\mathfrak{gl}(n,\mathbb K)$, but in general it is not a $\mathbb K$-subalgebra of 
$\mathcal M(n,\mathbb K)$, although it is a vector subspace of $\mathbb K[M_1,\dots, M_k]$.
In \cite{gerstenhaber-commuting-matrices}, Gerstenhaber proved the following result, hereafter named the Gerstenhaber's theorem. 
If  $M$ and $N$ are two commuting elements of  $\mathcal M(n,\mathbb K)$,  the vector space underlying  $\mathbb K[M,\;N]$ is of dimension at most $n,$ when $\mathbb K$ is algebraically closed.   
The version for  four or a higher number of commuting matrices is not true in general, as shown by Gerstenhaber himself in \cite{gerstenhaber-commuting-matrices}  using the following example. Let $E_{i,j}$ be the square matrix whose coefficients are all zero except the $(i,j)$ entry which is equal to $1.$ One sees that for $n=4$, the system $(E_{1,3},\;  E_{1,4}, \; E_{2,3},\; E_{2,4})$ generates a $5$-dimensional  $\mathbb K$-algebra with $(\mathbb I_{\mathbb K^4},\; E_{1,3},\;  E_{1,4}, \; E_{2,3},\; E_{2,4})$ as a basis, where $\mathbb I_{\mathbb K^n}$  hereafter stands for the identity matrix of order $n$.
The analog of this property for three commuting matrices is still an open problem. It has been proved true up to order  $n=10$, in
 \cite{guralnick-sethuraman,guralnick92,han,holbrook2001,sivic3,sivic2,sivic1}. In \cite{neubauer-sethuraman,commuting-3-matrices}, amongst other results, the authors supply cases where the analog of Gerstenhaber's theorem  for  three commuting matrices holds true. A lot of other works have been dedicated to the subject and its applications amongst which \cite{barria-halmos,kreuzer-robbiano,holbrook-omeara,kandic-sivic,Wadsworth}.  In \cite{decidable}, it is suggested that the problem might have a negative answer in general. But no counterexample has been found so far.
A comprehensive account of the state of the art, up to the year 2011, can be found in \cite{Sethuraman-survey}.

In the present work, we supply a general sufficient condition for the  $k$-matrix analog of Gerstenhaber's theorem to hold true, for any $k\ge 1$.  
Note that in \cite{gerstenhaber-commuting-matrices}, Gerstenhaber further showed that, if  $M$ and $N$ are two commuting elements of  $\mathcal M(n,\mathbb K)$, then  $\mathbb K[M,\;N]$ is always contained in an $n$-dimensional Abelian Lie subalgebra of  $\mathcal M(n,\mathbb K)$. 
 In general   the maximal dimension of a maximal Abelian  subalgebra (MASA, for short) of $\mathcal M(n,\mathbb K)$ is $[\frac{n^2}{4}]+1$, as shown by I. Schur in \cite{schur} for any algebraically closed field $\mathbb K$ and further generalized to any arbitrary field by N. Jacobson \cite{jacobson-schur}.

Our result can be summarized as follows. 
Let $\{M_1,\dots, M_k\}$ be a set of $k$ pairwise commuting elements of  $\mathcal M(n,\mathbb K)$. If one  can complete this set into a set  $\{M_1,\dots, M_k, M_{k+1},\dots, M_{n}\}$ of $n$ linearly independent pairwise commuting matrices, such that the Abelian subalgebra $\mathfrak{B}(M_1,\dots,M_n)$  of $\mathcal M(n,\mathbb K)$, has an open orbit in $(\mathbb K^n)^*$ for the contragredient action  corresponding to the ordinary action of matrices on vectors of $\mathbb K^n,$ then $\dim\mathbb K[M_1,\dots,M_k]\leq n.$ In fact, under such an assumption, the equality $\mathbb K[M_1,\dots,M_n] = \mathfrak{B}(M_1,\dots,M_n)$ holds true, hence any subset of $p$ matrices satisty the Gerstenhaber's theorem, for any $p=1,\dots, n.$ Let us remind here that an action $\rho$ of a Lie algebra $\mathcal G$ on a vector space, say $\mathbb K^n,$ is naturally lifted into an action $\rho^*$ of $\mathcal G$ on the dual space $(\mathbb K^n)^*$ of $\mathbb K^n$, called the corresponding contragredient action given by $\rho^*(a)f = - f \circ\rho(a)$, for any $a\in \mathcal G$ and $f\in(\mathbb K^n)^*$.
The approach so far used is mainly based on the study of the variety of 
$k$-tuples of commuting matrices of $\mathcal M(n,\mathbb K)$. In the case of $3$-tuples of commuting matrices, such a variety is no longer irreducible, so the methods used so far do not apply any longer.
Our approach is different and heavily relies on techniques akin to left invariant affine geometry on Lie groups with a left invariant symplectic structure. 
 It can be summarized as follows. 
We embed $\mathfrak{B}(M_1,\dots,M_n)$ in the Lie algebra of  a $2$-step solvable Lie group with a left invariant exact symplectic structure (Theorem \ref{thm:structure-2-step-Frobenius}) and show that the induced left symmetric algebra (LSA, for short) preserves  $\mathfrak{B}(M_1,\dots,M_n)$ and furthermore it coincides with the ordinary product of matrices (up to a sign) (Theorem \ref{thm:LSA-2-solvable-Frobenius}), hence implying the equality $\mathbb K[M_1,\dots,M_n] = \mathfrak{B}(M_1,\dots,M_n)$.  See Theorem \ref{maintheorem}.
We show that if an $n$-dimensional Abelian subalgebra $\mathfrak{B}$  of  $\mathfrak{gl}(n,\mathbb K)$, has an open orbit on   $(\mathbb K^n)^*,$ for the contragrediente action $\mathfrak{B}\times (\mathbb K^n)^* \to (\mathbb K^n)^*$, $(a,f)\mapsto - f\circ a,$ then $\mathfrak{B}$  is a maximal Abelian subalgebra (MASA) of  $\mathfrak{gl}(n,\mathbb K).$ 
We also discuss the classification of $2$-step solvable Frobenius Lie algebras and further show that it is equivalent to
that of n-dimensional MASAs  of $\mathfrak{gl}(n,\mathbb K)$ admitting an open orbit for the above contragrediente action on  $ (\mathbb K^n)^*.$ See Theorem \ref{theorem:MASAs}.
As applications, several examples are discussed in any dimension, see Examples \ref{example-general-nD}, \ref{example2a}, \ref{example23}. 
The sudy of MASAs has been a vibrant  and vivid subject these last decades, in relation with several subjects in mathematics and physics such as the classification of Lie algebras, the study of dynamical systems, especially the symmetry breaking, complete sets of commuting operators in a 
quantum-mechanical system, the generalized cusps in singularity theory and their applications to computer graphics, ...\cite{dixmier, gerstenhaber-commuting-matrices, holbrook2001, jacobson-schur, schur, sugiura, winternitz}.
A full classification list of all non-isomorphic 2-step solvable Frobenius Lie algebras up to dimension $6,$ is supplied (Theorem \ref{thm:classification-D6}). We fully classify all 2-step solvable Frobenius Lie algebras given by nonderogatory matrices in Theorem \ref{thm:classification}. A new characterization of Cartan subalgebras of $\mathfrak{sl}(n,\mathbb R)$ is given in Theorem \ref{thm:cartansubalgebras}.

The paper is organized as follows. Section \ref{sect:preliminaries},  is devoted to some preliminaries and a few notions.  In Section \ref{sect:2-step-solvable-Frobenius}, 
we give a construction of all 2-step solvable Frobenius Lie algebras and thereby a description of their algebraic structure.  A complete characterization of the corresponding left symmetric algebra (LSA) structure is given in Section \ref{sect:LSA}.  Section \ref{sect:Gerstenhaber-s-theorem} discusses our extension  of  Gerstenhaber's theorem and its proof, together with several examples ain any dimension. In Section \ref{sect:classification}, we  discuss the classification of $2$-step solvable Frobenius Lie algebras.
 The paper ends by 
some concluding remarks in Section \ref{sect:remarks}.

\section{Preliminaries}\label{sect:preliminaries}
Throughout this work, if $\mathfrak{F}$ is a vector space, we will denote its  (linear) dual by $\mathfrak{F}^*$. The symmetric bilinear form $\langle,\rangle$ stands for the 
duality pairing between vectors and linear forms. 
More precisely, for any $u\in\mathfrak{F}$ and $f\in\mathfrak{F}^*$, we define $\langle u,f \rangle$ as $\langle u,f \rangle: = f(u).$
Let $\mathfrak{B}$ be a Lie subalgebra of $\mathfrak{gl}(n,\mathbb K)$. We consider the action $ \mathfrak{B} \times \mathbb K^n \to \mathbb K^n,$ \; $(a,x)\mapsto \rho(a)x:= ax$ of  $\mathfrak{B}$ on $\mathbb K^n$ via the ordinary action of square matrices on vectors.  For any $(a,f)\in  \mathfrak{B} \times (\mathbb K^n)^*$, let  $\rho^*(a)f$ be the element of the dual $(\mathbb K^n)^*$  defined on elements $x$ of $\mathbb K^n$ by
 $\langle\rho^*(a)f, x\rangle:=-\langle f, \rho(a)(x)\rangle= -f(ax).$ We recall that $\rho^*$ is called the contragredient representation (or just action) of $\rho$. 
\begin{definition}  
We say that the orbit of $\alpha\in(\mathbb K^n)^*$, under the action $\rho^*$, is open if it is equal to the whole 
$(\mathbb K^n)^*$. That is $(\mathbb K^n)^*=\{\rho^*(a)\alpha, \;\; a\in \mathfrak{B}\}$. We say that $\alpha$ has a trivial isotropy  if there is no nonzero $a\in\mathfrak{B}$ satisfying $\rho^*(a)\alpha=0.$  
\end{definition}
The vector space $\mathfrak{B}\oplus \mathbb K^n$ endowed with the Lie bracket defined, for every $a,b\in\mathfrak{B}$ and $x,y\in\mathbb K^n$, as 
$[a,b]=[x,y] =0, \;\; \text{ and } \;\; [a,x]=\rho(a)x=ax,$
will be termed the semidirect sum of  $\mathfrak{B}$  and  $\mathbb K^n$  via $\rho$ and denoted by $\mathfrak{B}\ltimes \mathbb K^n$.
A symplectic Lie algebra (also called a quasi Frobenius Lie algebra)  is a Lie algebra $\mathcal G$ together with a nondegenerate skew-symmetric bilinear form $\omega$, which is closed, that is,
 \begin{equation} \label{co-cyle}
 \omega( [u,v],w)+\omega ([v,w],u) +\omega ([w,u],v)=0
, \hskip 2truemm \forall u,v,w\in \mathcal G,
 \end{equation}
where $[,]$ is the Lie bracket of $\mathcal G.$
 Exact symplectic (also termed Frobenius) structures  are special examples of symplectic structures on Lie algebras. They are given by the Chevalley-Eilenberg coboundary $\omega=\partial \alpha$ of  a linear form $\alpha \in\mathcal G^*$, for the adjoint action of $\mathcal G.$  More precisely,  $(\mathcal G,\omega)$ is a Frobenius Lie algebra if there exists $\alpha\in\mathcal G^*$, called a Frobenius functional, such that the skew-symmetric bilinear form $\omega$ defined, for any $u,v\in\mathcal G,$   by
\begin{eqnarray}\label{eq:Frobenius}\omega(u,v)=\partial\alpha(u,v)=-\langle \alpha, [u,v]\rangle,\end{eqnarray}
 is nondegenerate. Every Frobenius Lie algebra is a codimension 1 subalgebra of some contact Lie algebra and could be used to construct the latter.  The converse is true under some conditions  \cite{Diatta-Contact}, \cite{Diatta-Contact-Riem},  \cite{delgado-contact-frobenius-construct}. 

Recall that a left symmetric algebra (LSA) structure on a vector space $\mathfrak{F}$ is a product $\mathfrak{F}\times  \mathfrak{F} \to \mathfrak{F}$, $(u,v)\mapsto u\star v$ such that the corresponding associator $\mathfrak{a}(u,v,w):= (u\star v)\star w-u\star (v\star w)$ is left symmetric, that is,  $\mathfrak{a}(u,v,w) = \mathfrak{a}(v,u,w)$, $\forall u,v,w\in \mathfrak{F}$. Any symplectic structure, say given by a symplectic form $\omega$, on a Lie algebra $\mathcal G$, induces an LSA, denoted hereafter by $\star$, on $\mathcal G$. It is defined,  for any $u,v\in\mathcal G,$  by 
\begin{eqnarray}\label{LSA}
 \omega(u\star v, w):=- \omega(v, [u,w]) . 
\end{eqnarray}
 Moreover, the LSA $\star$ is compatible with the Lie bracket of $\mathcal G$, which is equivalent to the following equality:  for any $u,v\in\mathcal G,$ 
\begin{eqnarray}\label{LSA-torsionfree}
 u\star v-v\star u =[u,v]. 
\end{eqnarray}
Consider the vector space isomorphism $q:\mathcal G\to \mathcal G^*$, $\; u\mapsto i_{u}\omega =\omega(u,\cdot)$. In the case where $\omega=\partial \alpha$, the vector $v_0\in\mathcal G$ such that  $q(v_0) = \alpha$ is called the principal element, corresponding to $\alpha$.

Any Lie group $G$ with Lie algebra $\mathcal G$, has a left invariant  symplectic form $\omega^+$ whose value at the neutral element $\varepsilon$ of $G,$ is precisely $\omega$. Here $\mathcal G$ is identified with the tangent space to $G$ at $\varepsilon.$ 
There  is also, in  $G$, a left invariant affine structure induced by a left invariant  locally flat torsion free connection $\nabla$,  defined  on left invariant vector fields $u^+$, $v^+$  as
\begin{eqnarray}\nabla_{u^+}v^+:=(u\star v)^+, \text{ where } u_\varepsilon^+=u \text{  and  } v_\varepsilon^+=v\end{eqnarray} 
and naturally extended to all  smooth vector fields on $G.$
See for example \cite{diatta-manga}, \cite{diatta-medina}. 

\begin{definition}A Lie algebra $\mathcal G$  is said to be $2$-step solvable if its derived ideal $[\mathcal
G,\mathcal G]$ is Abelian, where $[,]$ stands for its Lie bracket, in other words, 
$[[u,v],[u',v']]=0$  for any $u,v,u',v'$ in $\mathcal G.$ A Lie algebra is said to be indecomposable if it cannot be written 
as the direct sum of two of its ideals. 
\end{definition}

\begin{definition}An Abelian  subalgebra $\mathcal A$ of a  Lie algebra $\mathcal G$  is called  a maximal Abelian subalgebra (MASA) of $\mathcal{G}$  if it is contained in no bigger Abelian subalgebra of $\mathcal{G}$, or equivalently, if $\mathcal A$ coincides with its centralizer $\{b\in \mathcal{G}, [b,a] =0, \; \forall a\in \mathcal A\}$ in $\mathcal G.$ Recall that an $n\times n$ matrix $M$ (resp. linear map) is said to be nonderogatory (or cyclic), if its characteristic and minimal polynomials coincide. \end{definition}
For   $M\in \mathcal M(n,\mathbb K),$ let $\mathbb
K[M]$  and $C(M)$  respectively stand for the ring of polynomials in $M$ with
coefficients in $\mathbb K$ and the space of $n\times n$ matrices (with
coefficients in $\mathbb K$) that commute with $M$.  For a linear map $\psi:\mathbb K^n\to \mathbb K^n,$ we will use the convention $\psi^0=\mathbb I_{ \mathbb K^n}$ and $\psi^{p+1} (x)=\psi( \psi^{p}(x)),$ for every $x\in\mathbb K^n$ and $p$ an integer.
\begin{lemma}[see e.g. \cite{bordemann-medina}, \cite{lucon}] \label{nonderogatorymatrices}
Let $E$ be a vector space of dimension $n$ over a field $\mathbb K$ with
       characteristic zero  and $M$ the matrix of a linear transformation of
$E$, in a given basis of $E$.  Denote $E^*$ the dual space of $E$.
 The following assertions are equivalent:
  \item[1)]   there exists $\bar \alpha\in E^*$ such that $(\bar\alpha,\bar\alpha\circ M,...,
\bar\alpha\circ M^{n-1})$ is a basis of $E^*$,
\item[2)]  there exists $\bar x\in E$ such that $(\bar x,M(\bar x),..., M^{n-1}(\bar x))$ is a basis of
$E$,
\item[3)] $\dim(C(M))=n$ and $C(M)$ is commutative,
\item[4)] $C(M)=\mathbb K[M]$,
\item[5)] the characteristic and the minimal polynomials of $M$
are the same,
\item[6)] In every extension $\bar{ \mathbb K}$  of $\mathbb K$
where $M$ admits a Jordan form, this latter has only one Jordan bloc for
each eigenvalue.
\end{lemma}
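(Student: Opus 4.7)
\medskip

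\noindent\textbf{Proof plan.} The plan is to take condition (5), the coincidence of the minimal and characteristic polynomials, as the pivot, and to establish each of the other five conditions as equivalent to it, so that the whole cycle of implications closes up. The underlying ingredients needed along the way are standard: the fact that $\dim_{\mathbb K} \mathbb K[M]$ equals the degree of the minimal polynomial $\mu_M$; the fact that $\mathbb K[M] \subseteq C(M)$ always; the a~priori inequality $\dim_{\mathbb K} C(M) \geq n$ (which can be read off from the Jordan or rational canonical form, after passing to an algebraic closure if necessary since $C(M)$ has the same dimension over any field extension); and the formula $\deg \mu_M = \sum_\lambda k_\lambda$, where $k_\lambda$ is the size of the \emph{largest} Jordan block associated with the eigenvalue $\lambda$, whereas $\deg \chi_M = \sum_\lambda (\text{sum of sizes of blocks for }\lambda)$.

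First I would handle $(2)\Leftrightarrow (5)$. For $(2)\Rightarrow (5)$: if $(\bar x, M\bar x, \dots, M^{n-1}\bar x)$ is a basis, any polynomial $p$ of degree $< n$ with $p(M)=0$ would give $p(M)\bar x = 0$, forcing all its coefficients to vanish; hence $\deg \mu_M \geq n$, and since $\mu_M \mid \chi_M$ and $\deg \chi_M = n$, equality $\mu_M = \chi_M$ holds. For $(5)\Rightarrow (2)$: the invariant factor / rational canonical form decomposition of $E$ as a $\mathbb K[X]$-module reads $E \simeq \mathbb K[X]/(f_1)\oplus\cdots\oplus \mathbb K[X]/(f_r)$ with $f_1\mid\cdots\mid f_r = \mu_M$; the hypothesis forces $r=1$, and the generator of $\mathbb K[X]/(\mu_M)$ provides the required cyclic vector. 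The equivalence $(1)\Leftrightarrow (2)$ then follows by duality: if $M^\top$ denotes the transpose of $M$ acting on $E^*$, then $\bar\alpha\circ M^i$ is, up to the canonical isomorphism, $(M^\top)^i\bar\alpha$, and since $M$ and $M^\top$ have identical characteristic and minimal polynomials, $(2)$ applied to $M^\top$ is exactly $(1)$ for $M$.

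Next, the equivalence $(5)\Leftrightarrow (6)$ is immediate from the formulas recalled above: the equality $\deg \mu_M = \deg \chi_M = n$ is equivalent to the largest block size coinciding with the total block size for every eigenvalue $\lambda$, i.e.\ to there being a single Jordan block per eigenvalue in any splitting field. For $(4)\Leftrightarrow (5)$: we always have $\dim_{\mathbb K}\mathbb K[M] = \deg \mu_M \leq n \leq \dim_{\mathbb K} C(M)$ and $\mathbb K[M]\subseteq C(M)$, so the equality $C(M)=\mathbb K[M]$ forces $\deg\mu_M = n$, giving $(5)$; conversely, if $\mu_M=\chi_M$, then $\dim \mathbb K[M] = n$, and combining with $\dim C(M)\geq n$ and $\mathbb K[M]\subseteq C(M)$ yields $C(M) = \mathbb K[M]$ by dimension count (once one knows that $\dim C(M) \leq n$ in the nonderogatory case, which is a standard Jordan computation).

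Finally, $(3)\Leftrightarrow (4)$ follows from the same dimension argument. The implication $(4)\Rightarrow (3)$ is clear since $\mathbb K[M]$ is commutative and, as just noted, has dimension $n$ when $C(M)=\mathbb K[M]$. Conversely, if $\dim C(M)=n$ and $C(M)$ is commutative, then the inclusion $\mathbb K[M]\subseteq C(M)$ combined with $\dim\mathbb K[M]=\deg\mu_M\leq n=\dim C(M)$ shows that equality $C(M)=\mathbb K[M]$ is equivalent to $\deg \mu_M = n$, which we have already shown to be equivalent to $(5)$; and the commutativity hypothesis is what prevents $C(M)$ from being strictly larger than $\mathbb K[M]$ when $M$ is derogatory (since any derogatory $M$ admits two non-commuting centralizing matrices built from its Jordan decomposition). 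The main technical point, and the one I would handle most carefully, is the dimension inequality $\dim C(M)\geq n$ together with its sharpening to equality precisely in the nonderogatory case; all other steps reduce to linear algebra over $\mathbb K[X]$-modules that one can carry out after base change to the algebraic closure, since both characteristic and minimal polynomials, as well as $\dim C(M)$ and $\dim \mathbb K[M]$, are invariant under field extension.
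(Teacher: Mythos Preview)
The paper does not supply its own proof of this lemma; it is stated with citations to \cite{bordemann-medina} and \cite{lucon} and then used as a standing fact throughout. So there is no in-paper argument to compare your proposal against.

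Your proof is correct and follows a standard route, taking (5) as the hub. The arguments for $(2)\Leftrightarrow(5)$ via cyclic vectors and the invariant-factor decomposition, $(1)\Leftrightarrow(2)$ via transpose duality, $(5)\Leftrightarrow(6)$ via the block-size formula for $\deg\mu_M$, and $(4)\Leftrightarrow(5)$ via the inclusion $\mathbb K[M]\subseteq C(M)$ together with $\dim C(M)\geq n$, are all clean and complete. One small redundancy worth noting: in your treatment of $(3)\Rightarrow(4)$ you invoke the commutativity of $C(M)$ to rule out the derogatory case, but in fact the condition $\dim C(M)=n$ alone already forces $M$ to be nonderogatory --- this is exactly the ``sharpening to equality precisely in the nonderogatory case'' that you yourself flag as the main technical point at the end. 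So the commutativity hypothesis in (3) plays no logical role in the implication $(3)\Rightarrow(4)$; it is an automatic consequence once $\dim C(M)=n$ is assumed. Your argument remains valid as written, just slightly less direct at that one step than it could be.
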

 
\section{On the structure of 2-step solvable Frobenius Lie algebras}\label{sect:2-step-solvable-Frobenius}

For simplicity, if  $\{a_1,\dots,a_n\}$ is a set of linearly independent pairwise commuting $n\times n$ matrices, we denote by $\mathfrak{B}$ the vector space 
\begin{eqnarray} \mathfrak{B}=\mathfrak{B}(a_1,\dots,a_n):=\mathbb K a_1\oplus\dots\oplus \mathbb K a_n\end{eqnarray}
 endowed with the Abelian Lie algebra structure $[a_i,a_j]=0$, for any $i,j=1,\dots,n$.
We consider the natural action 
$\mathfrak{B}\times \mathbb K^n\to \mathbb K^n $, $(a,x) \mapsto  \rho(a)x$, given by the ordinary multiplication $\rho(a)x:=ax$ of matrices $a\in\mathfrak{B}$ and vectors $x\in\mathbb K^n$, together with  the corresponding contragredient action
$\mathfrak{B}\times (\mathbb K^n)^*\to (\mathbb K^n)^* $, $(a,f) \mapsto -f\circ a$.

\begin{theorem}\label{thm:structure-2-step-Frobenius}
(A) Let $\{a_1,\dots,a_n\}$ be a set of $n$ linearly independent mutually commuting $n\times n$ matrices with entries in a field $\mathbb K$. Let  $\mathfrak{B}$ stand for  the vector space over $\mathbb K$ spanned by $a_1,\dots,a_n$, looked at as an Abelian Lie subalgebra of $\mathfrak{gl}(n,\mathbb K).$ 
Suppose the action $\mathfrak{B}\times (\mathbb K^n)^*\to (\mathbb K^n)^* $, $(a,f) \mapsto -f\circ a$, has an open orbit in $(\mathbb K^n)^*$. 
 Consider the Lie algebra $\mathcal G$ obtained by performing the semi-direct sum
 $\mathcal G:=\mathfrak{B} {\ltimes} _\rho \mathbb K^n$.  That is, $\mathcal G$ is the vector space $\mathfrak{B}\oplus \mathbb K^n$ endowed with the Lie bracket defined, for every $a,b\in\mathfrak{B}$ and $x,y\in\mathbb K^n$, as 
\begin{eqnarray}[a,b]=[x,y] =0, \;\; \text{ and } \;\; [a,x]=\rho(a)x=ax.\end{eqnarray} 
Then $\mathcal G$ is a 2-step solvable Frobenius Lie algebra.

(B) Conversely, any 2-step solvable Frobenius Lie algebra is isomorphic to a Lie algebra constructed as in (A).
\end{theorem}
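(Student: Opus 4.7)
For part (A), the verification is essentially direct. Since $[\mathcal{G},\mathcal{G}]\subseteq\rho(\mathfrak{B})(\mathbb{K}^n)\subseteq\mathbb{K}^n$ (which is abelian), $\mathcal{G}$ is $2$-step solvable. Picking $f_0\in(\mathbb{K}^n)^*$ on the open orbit makes the linear map $\mathfrak{B}\to(\mathbb{K}^n)^*$, $a\mapsto -f_0\circ a$, a surjection, hence a bijection since both spaces have dimension $n$. Define $\alpha\in\mathcal{G}^*$ by $\alpha|_{\mathfrak{B}}=0$ and $\alpha|_{\mathbb{K}^n}=f_0$; a quick computation shows that $\omega:=\partial\alpha$ vanishes on both diagonal blocks and equals $\omega(a,x)=-f_0(ax)$ on the mixed block. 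Nondegeneracy of $\omega$ then reduces to the pairing $(a,x)\mapsto -f_0(ax)$ being nondegenerate in each slot, which is precisely the bijectivity already established.

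For part (B), let $(\mathcal{G},\omega=\partial\alpha)$ be a $2$-step solvable Frobenius Lie algebra. Put $\mathfrak{n}=[\mathcal{G},\mathcal{G}]$, $\mathfrak{a}=\mathcal{G}/\mathfrak{n}$, $\beta=\alpha|_{\mathfrak{n}}$, and let $\bar\rho:\mathfrak{a}\to\mathrm{End}(\mathfrak{n})$ be the induced action. The main obstacle is to prove that $\mathfrak{n}$ is Lagrangian. Nondegeneracy of $\omega$ at once forces $\bar\ell:\mathfrak{a}\to\mathfrak{n}^*$, $a\mapsto -\beta\circ\bar\rho(a)$, to be surjective, so $\dim\mathfrak{a}\ge n:=\dim\mathfrak{n}$. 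Set $\mathfrak{c}:=\mathfrak{n}^{\perp}$. I would first establish $\mathfrak{c}=\{v\in\mathcal{G}:[v,\mathfrak{n}]=0\}$: the inclusion $\supseteq$ is immediate, and for $\subseteq$ one uses the commutativity of $\bar\rho(\mathfrak{a})$ together with surjectivity of $\bar\ell$ to deduce that $\bar\rho(\bar v)(\mathfrak{n})\subseteq\ker\beta$ forces $\bar\rho(\bar v)=0$ (since $\bar\ell(a)(\bar\rho(\bar v)x)=-\beta(\bar\rho(\bar v)\bar\rho(a)x)=0$ by commutativity, and $\mathrm{im}\,\bar\ell=\mathfrak{n}^*$ then kills $\bar\rho(\bar v)x$ for every $x$). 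Next, the Jacobi identity applied to $u,v\in\mathfrak{c}$, $w\in\mathcal{G}$ collapses, using $[\mathfrak{c},\mathfrak{n}]=0$, to $\bar\rho(\bar w)[u,v]=0$ for all $w$; since $Z(\mathcal{G})=0$ (a consequence of Frobenius) forces $\bigcap_{\bar w}\ker\bar\rho(\bar w)=0$ inside $\mathfrak{n}$, we conclude $[u,v]=0$. Hence $\omega|_{\mathfrak{c}\times\mathfrak{c}}=-\alpha([u,v])=0$, so $\mathfrak{c}$ is isotropic and $\mathfrak{c}\subseteq\mathfrak{c}^{\perp}=\mathfrak{n}^{\perp\perp}=\mathfrak{n}$; combined with $\mathfrak{n}\subseteq\mathfrak{c}$ this yields $\mathfrak{c}=\mathfrak{n}$, i.e., $\mathfrak{n}$ is Lagrangian and $\dim\mathcal{G}=2n$.

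With Lagrangianity in hand, I would use the LSA $\star$ induced by $\omega$ to extract the remaining structure. The principal element $v_0$ satisfies $v\star v_0=v$, while the Lagrangianity of $\mathfrak{n}$ forces $v\star x=0$ for every $x\in\mathfrak{n}$. Fixing any vector-space complement $\mathfrak{L}$ of $\mathfrak{n}$ and writing $v_0=v_0^{\mathfrak{L}}+v_0^{\mathfrak{n}}$, the identity $x\star v_0^{\mathfrak{L}}=x$ together with $x\star v=[x,v]=-\bar\rho(\bar v)x$ for $x\in\mathfrak{n},v\in\mathfrak{L}$, yields $\bar\rho(\overline{v_0^{\mathfrak{L}}})=-\mathrm{id}_{\mathfrak{n}}$, so in particular $\mathrm{id}\in\bar\rho(\mathfrak{a})$. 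Comparing the two expressions for $\omega(b\star b',y)$ with $y\in\mathfrak{n}$ then gives $\bar\rho(c_{b,b'})=-\bar\rho(b)\bar\rho(b')$ for the $\mathfrak{L}$-component $c_{b,b'}$ of $b\star b'$, showing that $\bar\rho(\mathfrak{a})$ is closed under matrix multiplication. Finally, to split the extension $0\to\mathfrak{n}\to\mathcal{G}\to\mathfrak{a}\to 0$, I would plug $u=v_0^{\mathfrak{L}}$ into the $2$-cocycle identity for $\gamma(b,b'):=[b,b']$ and use $\bar\rho(v_0^{\mathfrak{L}})=-\mathrm{id}$ to derive $\gamma(b,b')=\bar\rho(b')\gamma(v_0^{\mathfrak{L}},b)-\bar\rho(b)\gamma(v_0^{\mathfrak{L}},b')$; the map $\phi(b):=\gamma(v_0^{\mathfrak{L}},b)-\bar\rho(b)v_0^{\mathfrak{n}}$ is then a primitive for $\gamma$, so $\mathfrak{L}':=\{b+\phi(b):b\in\mathfrak{L}\}$ is an abelian subalgebra complementing $\mathfrak{n}$. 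A choice of basis of $\mathfrak{n}$ together with the identification $\mathfrak{L}'\cong\bar\rho(\mathfrak{a})\subseteq\mathcal{M}(n,\mathbb{K})$ via $\bar\rho$ now exhibits $\mathcal{G}$ as the semi-direct sum constructed in (A), the surjectivity of $\bar\ell$ translating directly into the required open-orbit condition.
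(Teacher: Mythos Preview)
Your proof of Part~(A) matches the paper's argument: both extend a linear form $f_0$ with open orbit to a Frobenius functional on $\mathcal{G}$ and verify nondegeneracy of $\partial\alpha$ directly (the paper phrases this via $(\partial\alpha)^n\neq 0$, you via nondegeneracy of the mixed pairing, but these are equivalent).

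For Part~(B) you take a genuinely different route. The paper \emph{asserts} at the outset that $\mathcal{G}$ decomposes as $\mathfrak{B}_1\oplus[\mathcal{G},\mathcal{G}]$ with $\mathfrak{B}_1$ an abelian Cartan subalgebra---a structural claim it does not justify in the proof---and then performs a rank count on $\partial\alpha=\sum_i\alpha_i\wedge\beta_i$ to force $\dim\mathfrak{B}_1=\dim[\mathcal{G},\mathcal{G}]=n$ together with the open-orbit condition. You instead (i) prove directly that $\mathfrak{n}=[\mathcal{G},\mathcal{G}]$ is Lagrangian, by identifying $\mathfrak{n}^\perp$ with the centralizer of $\mathfrak{n}$ and showing the latter is abelian (hence $\omega$-isotropic), and then (ii) use the induced left-symmetric product and its principal element to locate $-\mathrm{id}\in\bar\rho(\mathfrak{a})$ and explicitly trivialize the extension $2$-cocycle. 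Your argument is longer but more self-contained: it does not rely on an unproved abelian splitting, and it effectively folds into this proof much of the LSA machinery (notably $v_0=-\mathbb{I}_{\mathbb{K}^n}$ and closure of $\mathfrak{B}$ under matrix multiplication) that the paper develops separately in Theorem~\ref{thm:LSA-2-solvable-Frobenius}. The paper's approach, by contrast, reaches the equal-dimension conclusion via a clean exterior-algebra count, at the price of taking the abelian complement as given. As a minor remark, your step showing $\bar\rho(\mathfrak{a})$ is closed under matrix multiplication is a bonus here: the statement of the theorem only requires $\mathfrak{B}$ to consist of mutually commuting matrices, so the splitting alone already suffices.
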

\begin{proof}
 (A) Consider the Lie algebra $\mathcal G:=\mathfrak{B} \ltimes_\rho \mathbb K^n$, its underlying Lie bracket reads, for every $a,b\in\mathfrak{B}$ and $x,y\in\mathbb K^n$, as $[a,b]=[x,y] =0$,  and $[a,x]=\rho(a)x:=ax$. By hypothesis, for the contragredient action $\rho^*$, there exists a linear form $\alpha\in(\mathbb K^n)^*$, whose orbit is equal to the whole $(\mathbb K^n)^*.$ So, there exists a basis $(a_1,\dots,a_n)$ of $\mathfrak{B}$ such that $\Big(\rho^*(a_1)\alpha, \; \dots,\; \rho^*(a_n)\alpha\Big)$ is a basis of $(\mathbb K^n)^*.$ 
Now extend $\alpha$  into the element $\tilde \alpha$ of the dual $\mathcal G^*$ of $\mathcal G$ defined by $\tilde\alpha (a)=0$ for any $a\in\mathfrak{B}$ and $\tilde \alpha(x)=\alpha (x),$ for any $x\in\mathbb K^n.$ From now on, we simply write $\alpha$ instead of $\tilde \alpha.$  
Let $(a_1^*,\dots,a_n^*)$ be the dual basis of $(a_1,\dots,a_n)$, so that $\Big(a_1^*,\dots,a_n^*, \rho^*(a_1)\alpha, \; \dots,\; \rho^*(a_n)\alpha\Big)$ is a basis of $\mathcal G^*,$ where each $a_i^*$ is regarded as the element of $\mathcal G^*$ whose restriction to $\mathbb K^n$ is identically equal to zero and which coincides with $a_i^*$ on $\mathfrak{B}$. 
The Chevalley-Eilenberg coboundary $\partial\alpha$, of $\alpha$, is  given, for any $a,b\in\mathfrak{B}$ and any $x,y\in\mathbb K^n$, by
\begin{eqnarray}\label{totallyisotropic}\partial \alpha (a,b):=-\langle\alpha, [a,b]\rangle =0, \;\partial \alpha (x,y):=-\langle\alpha, [x,y]\rangle =0 
\end{eqnarray}
  and
\begin{eqnarray}\partial \alpha (a,x)&:=&-\langle\alpha, [a,x]\rangle = -\displaystyle\sum_{i=1}^n  a_i^*(a) \langle\alpha, [a_i,x]\rangle \nonumber\\
&=&
\displaystyle\sum_{i=1}^n a_i^*(a)\langle\rho^*(a_i)\alpha, x\rangle = \Big( \displaystyle\sum_{i=1}^n a_i^*\wedge \rho^*(a_i)\alpha\Big)(a,x).
\end{eqnarray}
Hence we get 
\begin{eqnarray}\partial \alpha &:=& \displaystyle\sum_{i=1}^n a_i^*\wedge \rho^*(a_i)\alpha
\end{eqnarray}
so that 
\begin{eqnarray}\Big(\partial \alpha\Big)^n &:=& (-1)^{\frac{(n-1)n}{2}}\;n! \; a_1^*\wedge\cdots \wedge a_n^*\wedge \rho^*(a_1)\alpha \wedge\cdots \wedge \rho^*(a_n)\alpha
\end{eqnarray}
which is a volume form on $\mathcal G.$
Hence $\mathcal G$ is an exact symplectic (that is, a Frobenius) Lie algebra. By construction, $\mathcal G$ is 2-step solvable.

(B) Conversely let $\mathcal G$ be a 2-step solvable Frobenius Lie algebra over a field $\mathbb K$ of characteristic $0.$
 As $\mathcal G$ must have  a trivial center, then it splits  as a direct sum of vector  subspaces
$\mathcal G =\mathfrak{B}_1 \oplus \mathfrak{B}_2$ where  $\mathfrak{B}_2$ is the derived ideal $\mathfrak{B}_2:=[\mathcal G,\mathcal
G]$  of $\mathcal G$ and $\mathfrak{B}_1$ is an Abelian Cartan subalgebra  of $\mathcal G$. Set $p_i:= \dim(\mathfrak{B}_i)$, $i=1,2.$
For $a\in \mathfrak{B}_1$ let $\rho(a)$ be the restriction to $\mathfrak{B}_2$ of                the
adjoint $ad(a)$. As the center of $\mathcal G$ is trivial,
$\rho$ then defines a faithful representation of $\mathfrak{B}_1$ by linear
transformations
of $\mathfrak{B}_2$.  In this decomposition, the Lie bracket of $\mathcal G$ reads
\begin{eqnarray} [a,b]=[x,y]=0 \text{ and } [a,x] =\rho(a)x,\end{eqnarray}
  for any  $a,b\in\mathfrak{B}_1$ \ and any  $x,y\in\mathfrak{B}_2.$
A linear form on $\mathcal G$ is closed if and only if
its restriction to $\mathfrak{B}_2$ is identically zero. Let $\rho^*$ be the associated
contragrediente action of $\mathfrak{B}_1$ on $\mathfrak{B}_2^*$ defined by $\rho^* (a)(\alpha):= -
\alpha\circ\rho(a)$, for $a\in \mathfrak{B}_1$ and $\alpha$ in $\mathfrak{B}_2^*$.
 From the decomposition $\mathcal G=\mathfrak{B}_1\oplus \mathfrak{B}_2$, the dual space
 $\mathcal G^*$ of $\mathcal G$ can be viewed as $\mathcal G^*=\mathfrak{B}_2^o\oplus
\mathfrak{B}_1^o$, where $\mathfrak{B}_i^o$ is the  subspace consisting of all linear
forms on $\mathcal G$, whose restriction to $\mathfrak{B}_i$ is identically zero.

Suppose now that $\mathcal G$ is of even dimension $m=2n$, with $n\ge 1$ and
suppose further that $\eta=\alpha + \alpha'$ is a Frobenius functional on
$\mathcal G$, where $\alpha$ is in $\mathfrak{B}_1^o\equiv \mathfrak{B}_2^*$ and
$\alpha'$ in $\mathfrak{B}_2^o\equiv \mathfrak{B}_1^*$. Then for any basis $(a_1,\dots,a_{p_1})$ of
$\mathfrak{B}_{1} $ with dual basis $(a_1^*,\dots,a_{p_1}^*)$, the expression
of $\partial \eta$ reads
 \begin{equation}
 \partial \eta = \partial \alpha = -\displaystyle \sum^{p_1}_{i=1}
 \rho^*(a_i)(\alpha)\wedge a_i^*.
 \end{equation}
 If $k$ is the dimension of the orbit of $\alpha$
under the action $\rho^*$, such a sum merely factorizes into
   \begin{equation}
\partial \eta = \partial \alpha = \displaystyle \sum^{p}_{i=1}
\alpha_i\wedge \beta_i,
  \end{equation}
 where $p=\inf(k,p_1)$, the linear $1$-forms $\alpha_i$ and $\beta_i$
 are respectively in $\mathfrak{B}_{1}^o $ and $\mathfrak{B}_{2}^o $. Due to the property
$(\alpha_i\wedge \beta_i)^2=0$ for each $i=1,\dots,p$, the
$2(p+j)$-form $(\partial \eta)^{p+j}$ is identically zero, if $j$ is greater
than or equal to $1$. But of course $p$ is less than or equal to
$\inf(p_1,p_2)$, as  $k$ is less than or equal to $p_1$.
 Thus as $p_1+p_2=2n$, the non vanishing condition on $(\partial\eta)^n$
 imposes that $n=p=p_1=p_2$. Hence $\rho (\mathfrak{B}_1)$ is  an $n$-dimensional Abelian
subalgebra of the Lie algebra $\mathfrak{gl}(n,\mathbb K)$ of all linear
transformations of $\mathfrak{B}_2$ and
$(\rho^*(a_1)(\alpha),\rho^*(a_2)(\alpha),\dots,\rho^*(a_n)(\alpha))$ is a
basis
of the orbit of $\alpha$, for any basis  $(a_1,\dots,a_n)$ of $\mathfrak{B}_1$. So the orbit of $\alpha$ under $\rho^*$ is open.

We now write $\mathbb K^n=\mathfrak{B}_2=[\mathcal G,\mathcal G]$  so that $\mathcal G=\mathfrak{B}\ltimes_\rho \mathbb K^n$, where $\mathfrak{B}:=\mathfrak{B}_1.$ Fixing a basis on $\mathbb K^n,$ then $\rho(\mathfrak{B})$ can be seen as an $n$-dimensional Abelian Lie algebra of $n\times n$ matrices acting on $\mathbb K^n$ via the canonical action $\rho$ of matrices on vectors, so that, for the corresponding contragredient action, the orbit of $\alpha$ is open. Furthermore, the linear map
 \begin{eqnarray}\psi:\mathcal G=\mathfrak{B}\ltimes_\rho \mathbb K^n\to \rho(\mathfrak{B})\ltimes \mathbb K^n, \;\; \psi(a,x)=\Big(\rho(a),x\Big),\nonumber
\end{eqnarray}
  is an isomorphism between the Lie algebras $\mathcal G$  and $\rho(\mathfrak{B})\ltimes \mathbb K^n$, where the Lie bracket in the latter is the canonical one 
	\begin{eqnarray}\Big[\Big(\rho(a),x\Big), \Big(\rho(b),y\Big)\Big]&=&\Big(\rho(a)\rho(b)-\rho(b)\rho(a), \rho(a)y-\rho(b)x\Big)\nonumber\\
												    & =&\Big(0, \rho(a)y-\rho(b)x\Big). \nonumber\end{eqnarray} 
This can indeed be seen from the equalities
\begin{eqnarray} 
\psi\Big(\Big[(a,x), (b,y)\Big]_{\mathcal G}\Big)&=&\psi\Big(0, \rho(a)y-\rho(b)x\Big)=\Big(0, \rho(a)y-\rho(b)x\Big),
\nonumber\\
\Big[\psi(a,x), \psi(b,y)\Big]_{\psi(\mathcal G)}&=&\Big[\Big(\rho(a),x\Big), \Big(\rho(b),y\Big)\Big]_{\psi(\mathcal G)}=\Big(0, \rho(a)y-\rho(b)x\Big).
\nonumber
\end{eqnarray}
\end{proof}
\begin{remark}\label{rmq:isotropic}Note that the equalities (\ref{totallyisotropic}) mean that $\mathfrak{B}$ and $\mathbb K^n$ are both Lagrangian Abelian Lie subalgebras of $\mathcal G,$ for the exact symplectic form $\omega:=\partial\alpha.$  So $\mathfrak{B}$ and $\mathbb K^n$  integrate to two left invariant foliations, on any 2-step solvable Frobenius Lie group $G$ with Lie algebra $\mathcal G,$ whose leaves are everywhere transverse Lagrangian submanifolds of $G.$ Furthermore, the leaves through the unit $\varepsilon$ are Abelian subgroups of $G$ whose Lie algebras are $\mathfrak{B}$ and $\mathbb K^n$, respectively.
\end{remark}

\section{On the left symmetric algebra induced by the symplectic structure}\label{sect:LSA} 

Let $\mathcal G$ be a $2$-step solvable Frobenius Lie algebra. By a convenient choice of a complementary subspace $\mathfrak{B}$ of 
the derived ideal  $\mathbb K^n=[\mathcal G,\mathcal G]$ of $\mathcal G$, we can always rewrite $\mathcal G$ as the semi-direct sum
$\mathfrak{B}\ltimes \mathbb K^n.$  From Theorem \ref{thm:structure-2-step-Frobenius}, we can chose $\mathfrak{B}$ to be an $n$-dimensional subspace of commuting $n\times n$ matrices. In this section, we  completely characterize the LSA (\ref{LSA}) induced by exact symplectic forms in the case of 2-step solvable Frobenius Lie algebras.
\begin{theorem}\label{thm:LSA-2-solvable-Frobenius}
Let $(\mathcal G=\mathfrak{B}\ltimes \mathbb K^n,\partial \alpha)$ be a $2$-step solvable Frobenius Lie algebra, where $\mathbb K^n$ is the derived ideal 
$\mathbb K^n=[\mathcal G,\mathcal G]$ and $\mathfrak{B}$ a complementary subspace of $\mathbb K^n$. Then, up to a choice of the Frobenius functional $\alpha,$
  the LSA $\star$ in (\ref{LSA}) preserves $\mathfrak{B}$. In other words,
\begin{eqnarray}\label{LSA-preserveA}
a\star b \in\mathfrak{B}, \; \; \text{ for any } a,b\in\mathfrak{B}. 
\end{eqnarray}
Furthermore when $\mathfrak{B}$ is chosen to be an $n$-dimensional subspace of commuting $n\times n$ matrices  with entries in $\mathbb K,$ 
let us denote by $ab$ the ordinary product of the matrices $a, \;b\in\mathcal{M}(n,\mathbb K)$ and by $ax$, the ordinary multiplication of the matrix $a$ and the vector $x\in\mathbb K^n.$ Then the LSA $\star$ in (\ref{LSA}) is completely characterized,  for any $a,b\in\mathfrak{B}$ and any $ x,y\in\mathbb K^n$, by
\begin{eqnarray}\label{eq:homomorphism-LSA}
a\star b=-ab, \;\;  a\star x =0, \;\; x\star y=0, \;\; x\star a = - ax.
\end{eqnarray}
This means that the product $ab$ of two matrices $a$ and $b$ of $\mathfrak{B}$, lies again in $\mathfrak{B}.$ 
The principal element $v_0$ coincides with $-\mathbb I_{\mathbb K^n}$, where $\mathbb I_{\mathbb K^n}$ is the $n\times n$ identity matrix.
\end{theorem}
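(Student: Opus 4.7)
My plan is to compute the LSA product directly from the defining identity
$\omega(u\star v,w) = -\omega(v,[u,w])$ with $\omega=\partial\alpha$, exploiting two structural features. First, by the argument in the proof of Theorem \ref{thm:structure-2-step-Frobenius}, we may take $\alpha\in(\mathbb K^n)^*\subset\mathcal G^*$ to vanish identically on $\mathfrak B$; combined with $[a,b]=[x,y]=0$, this gives $\omega|_{\mathfrak B\times\mathfrak B}=0$ and $\omega|_{\mathbb K^n\times\mathbb K^n}=0$, so $\mathfrak B$ and $\mathbb K^n$ are transverse Lagrangians and $\omega$ restricts to a nondegenerate pairing $(a,y)\mapsto-\alpha(ay)$. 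Second, the open orbit hypothesis says exactly that the linear map $\mathfrak B\to(\mathbb K^n)^*$, $a\mapsto-\alpha\circ a$, is a bijection. Throughout, for any $u\star v$ I will decompose $u\star v=c+z$ with $c\in\mathfrak B$ and $z\in\mathbb K^n$ and test the defining identity against $w\in\mathbb K^n$ (to pin down $c$) and $w\in\mathfrak B$ (to pin down $z$).

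Carrying this out for $u=a\in\mathfrak B$, $v=x\in\mathbb K^n$: testing against $y\in\mathbb K^n$ gives $-\alpha(cy)=-\omega(x,ay)=0$, so $\alpha\circ c=0$ and hence $c=0$ by the bijectivity above; testing against $b\in\mathfrak B$ gives $\alpha(bz)=0$ for every $b\in\mathfrak B$, and again bijectivity forces $z=0$. Thus $a\star x=0$. The same method applied to $u=x,v=y\in\mathbb K^n$ yields $x\star y=0$. The identity $x\star a = a\star x + [x,a] = -ax$ is then immediate from the torsion-free property (\ref{LSA-torsionfree}).

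The heart of the argument is the case $a\star b$ with $a,b\in\mathfrak B$. Testing against $b'\in\mathfrak B$ gives $\alpha(b'z)=0$ for all $b'\in\mathfrak B$, so $z=0$. Testing against $y\in\mathbb K^n$ gives
\begin{equation}
\alpha\bigl((c+ba)y\bigr)=0\quad\text{for all }y\in\mathbb K^n.
\end{equation}
The delicate point is that a priori $ba$ is only known to lie in $\mathcal M(n,\mathbb K)$, not in $\mathfrak B$, so one cannot immediately invoke injectivity of $d\mapsto\alpha\circ d$. The key trick is to substitute $y\mapsto ey$ for arbitrary $e\in\mathfrak B$: since $a,b,c,e$ pairwise commute (for $c$, because $c\in\mathfrak B$; for $ba$, because it is a product of elements of the abelian $\mathfrak B$), we get $(c+ba)e = e(c+ba)$, and the identity becomes $\langle-\alpha\circ e,(c+ba)y\rangle=0$ for every $e\in\mathfrak B$ and every $y\in\mathbb K^n$. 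The open orbit hypothesis now forces $(c+ba)y=0$ for every $y$, so $c+ba=0$ as a matrix. This simultaneously establishes $ba\in\mathfrak B$ and $a\star b=-ba=-ab$, which is the crucial closure statement (and, en route, recovers that $\mathfrak B$ is a MASA).

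Finally, for the principal element $v_0$ defined by $\omega(v_0,\cdot)=\alpha$, writing $v_0=v_1+v_2$ with $v_1\in\mathfrak B,v_2\in\mathbb K^n$: testing against $\mathfrak B$ and using $\alpha|_\mathfrak B=0$ forces $v_2=0$; testing against $y\in\mathbb K^n$ yields $\alpha((v_1+\mathbb I_{\mathbb K^n})y)=0$ for every $y$, and exactly the same substitution trick $y\mapsto ey$ forces $v_1=-\mathbb I_{\mathbb K^n}$ (and incidentally $\mathbb I_{\mathbb K^n}\in\mathfrak B$). The main obstacle I anticipate is step three: promoting the integrated identity $\alpha\circ(c+ba)=0$ to the pointwise identity $(c+ba)y=0$ without already assuming $ba\in\mathfrak B$. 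The commutativity of $\mathfrak B$ combined with the open orbit hypothesis is precisely what bridges this gap, and it is the conceptual content of the theorem.
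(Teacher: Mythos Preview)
Your proof is correct, and in several places (the Lagrangian setup, the argument that $a\star b\in\mathfrak B$, the computations of $a\star x$, $x\star y$, $x\star a$) it runs parallel to the paper's. The genuine divergence is in the core step $a\star b=-ab$. The paper, having shown $a\star b\in\mathfrak B$, computes $\omega(c,(a\star b)x)$ by iterating the LSA identity three times (through $(a\star b)\star c=c\star(b\star a)$) and then invoking the Jacobi identity to rewrite $[a,[b,[c,x]]]=[c,[a,[b,x]]]$, arriving at $\omega(c,(a\star b)x)=-\omega(c,abx)$ and concluding by nondegeneracy. Your route applies the defining identity once to obtain $\alpha\big((c+ba)y\big)=0$, and then the substitution $y\mapsto ey$ together with the commutation $(c+ba)e=e(c+ba)$ lets the open orbit hypothesis upgrade this to $(c+ba)y=0$ pointwise. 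The same trick handles the principal element, whereas the paper argues via the commutativity of $\star$ on $\mathfrak B$ and the identity $\omega(v_0,\cdot)=\alpha$.

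Your approach is more elementary---no iterated LSA products, no Jacobi identity---and makes the role of the open orbit hypothesis explicit; it also delivers $ab\in\mathfrak B$ and $\mathbb I_{\mathbb K^n}\in\mathfrak B$ as direct outputs rather than consequences. The paper's approach stays within the Lie-algebraic and LSA formalism throughout and appeals only to nondegeneracy of $\omega$ at the end, which some readers may find more structurally natural.
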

\begin{proof}
 Set $\omega:=\partial \alpha$.
For any  $a,b\in\mathfrak{B}$, we have 
  $\omega(a\star b,c)=-\omega(b,[a,c])=0$.   Hence we have 
\begin{eqnarray}\label{LSA-preserveB}
a\star b \in\mathfrak{B}, \; \; \text{ for any } a,b\in\mathfrak{B}.
\end{eqnarray}
Indeed, if $a\star b$ were not an element of $\mathfrak{B}$, then there would exist some $c_0\in\mathfrak{B}$ and a nonzero $y_0\in\mathbb K^n$ 
such that  $a\star b=c_0+y_0.$ So for every $c\in\mathfrak{B}$, we  would have $0=\omega(a\star b,c) =\omega(y_0,c) $ and as $\mathbb K^n$ is isotropic
(Remark \ref{rmq:isotropic}), we would also have $\omega(y_0,x) =0$ for every $x\in\mathbb K^n.$ That means that   $\omega(y_0,u) =0$ for every $u\in\mathcal G,$ which is in contradiction with the fact that $y_0$ is nonzero, given that $\omega$ is nondegenerate. Note that, as $\mathfrak{B}$
is Abelian,  Equation (\ref{LSA-torsionfree}) implies that the restriction of $\star$ to $\mathfrak{B}\times\mathfrak{B}$ is commutative
\begin{eqnarray}\label{eq:commutativeLSA}
a\star b= b\star a
\end{eqnarray}
for any $a,b\in\mathfrak{B}$.
Below, we now show that $a\star b$ actually coincides with the matrix multiplication $ab$, up to a sign. In order to do so, we only need to prove that $a\star b$ and $-ab$ coincide when applied on vectors
 $x\in\mathbb K^n.$ Indeed, for any $a,b,c\in\mathfrak{B}$ and any $x\in\mathbb K^n$, we have
 \begin{eqnarray}\omega (c, (a\star b)x) &=& \omega (c, [a\star b, \; x]) = - \omega ((a\star b)\star c, x). \nonumber
\end{eqnarray}
We apply  here the commutativity (\ref{eq:commutativeLSA}) of $\star$ to get $(a\star b)\star c = c\star  (a\star b)= c\star  (b\star a)$, so that
 \begin{eqnarray}
\omega (c, (a\star b)x) &=& -  \omega ( c\star (b\star a), x) =  \omega ( b\star a, [c,x]) 
 = - \omega ( a, [b,[c,x]])\nonumber
\\
&=& \langle \alpha,  [a, [b,[c,x]]]\rangle.\label{eq:intemediate}
\end{eqnarray}
The last equality above  comes from Equation (\ref{eq:Frobenius}), with $u=a$ and $v=[b,[c,x]]$.
Now the Jacobi identity, coupled with the fact that $\mathfrak{B}$ is Abelian,  give the property  $[a, [b,[c,x]]]= [a, [c,[b,x]]] = [c, [a,[b,x]]]$, which we plug into the above equalities (\ref{eq:intemediate}) to get
\begin{eqnarray}
\omega (c, (a\star b)x) &=&  \langle \alpha,  [c, [a,[b,x]]]\rangle  = -\omega(c, [a,[b,x]])= -\omega(c, abx). 
\end{eqnarray}
On the other hand, as $\mathbb K^n$ is totally isotropic with respect to $\omega,$ we obviously have 
\begin{eqnarray}\omega (y, (a\star b)x) =0=\omega(y, abx), \text{ for any } y\in\mathbb K^n.\end{eqnarray}

 Altogether, this is equivalent to the equality  $a\star b=-ab=-ba,$ for any $a,b\in\mathfrak{B}$.
Now using the commutativity of $\mathfrak{B}$ and again the fact that $\mathbb K^n$ is totally isotropic, we get
\begin{eqnarray}
\omega (a\star x, c) &:=&  - \omega ( x, [a,c]))=0, \;\;\;  \omega (a\star x, y) :=  - \omega ( x, ay)=0, 
\end{eqnarray}
which imply the equality $a\star x =  0,$ for any $a\in\mathfrak{B}$ and any $x\in\mathbb K^n.$ In the same way, 
\begin{eqnarray}
\omega (x\star a, b) &:=&  - \omega (a, [x,b]))=\omega (a, [b,x]))= - \omega (b\star a,x)= - \omega (a\star b,x)\nonumber\\
&=&  \omega (b, [a,x])=-\omega (ax, b)
\end{eqnarray}
and 
\begin{eqnarray}
\omega (x\star a, y) &:=&  - \omega (a, [x,y]))= 0 = -\omega (ax, y),
\end{eqnarray}
for any $a,b\in \mathfrak{B}$ and any $x,y\in\mathbb K^n.$ This proves the equality $x\star a=-ax$, for any $a\in\mathfrak{B}$ and any $x\in\mathbb K^n.$ It is also easy to see that $x\star y=0$, due to the equalities
\begin{eqnarray}
\omega (x\star y, a) &:=&   \omega (y, ax)=0, \;\; \text{ and } \omega (x\star y, z)=- \omega (y,[x,z])=0.\nonumber
\end{eqnarray}
The Frobenius functional $\alpha$ can be chosen in $(\mathbb{K}^n)^*,$ so that $\langle\alpha,a\rangle=0$, for any $a$ in $ \mathfrak{B}.$ Consider the vector space isomorphism $q:\mathcal G\to \mathcal G^*$, $\; u\mapsto q(u):=i_{u}\omega =\omega(u,\cdot)$. 
 Let $v_0$ be the principal element, corresponding to $\alpha$, that is,  $q(v_0) = \alpha.$
For every $a\in\mathfrak{B}$, we have $0=\langle\alpha, a\rangle = \omega(v_0,a)$. Hence, using the same reasoning as above, we conclude that $v_0$ is an element of $\mathfrak{B}.$
It is well known that $v_0$ is a right unit, in the general case, for the LSA $\star$ defined in (\ref{LSA}), induced on $\mathcal G$ by $\omega$  (see e.g.\cite{diatta-manga}).
Now we are going to prove that, in the particular case of 2-step solvable Frobenius Lie algebras at hand, $v_0$ coincides with $-\mathbb I_{\mathbb K^n}$.  
Let $x\in\mathbb K^n$. For any $a\in \mathfrak{B},$ we have 
\begin{eqnarray}\omega (a, v_0x)&=& - \omega (v_0\star a,x) = - \omega (a\star v_0,x)\nonumber\\
& =& \omega ( v_0,[a,x]) = \langle \alpha, [a,x]\rangle 
= - \omega (a,x).\end{eqnarray} 
Hence  $v_0 x = -x$, for any $x\in \mathbb K^n$ and  so 
$v_0  = -\mathbb I_{\mathbb K^n}.$ 
\end{proof}

\section{A generalization of Gerstenhaber's theorem}\label{sect:Gerstenhaber-s-theorem}

In the following,  we prove the Gerstenhaber's theorem for  the general case of any set  $\{a_1,\dots,a_k\}$  of $k$ pairwise commuting $n\times n$ matrices,  for any 
$1\leq k \leq n$. This is done under the assumption that such a set can be completed into a set $\{a_1,\dots,a_k,a_{k+1},\dots,a_n\}$ of $n$  linearly independent  $n\times n$ matrices such that, under the canonical action of matrices on vectors of $\mathbb K^n$,  the corresponding Abelian  Lie algebra has an open orbit in the space $(\mathbb K^n)^*$ of linear forms on $\mathbb K^n$.

\begin{theorem}\label{maintheorem}
Let $S:=\{a_1,\dots,a_n\}$ be a set of $n$ linearly independent mutually commuting $n\times n$ matrices with entries in a field $\mathbb K$. Suppose that, as an  $n$-dimensional Abelian Lie subalgebra of $\mathfrak{gl}(n,\mathbb K),$ the vector space  $\mathfrak{B}$ over $\mathbb K$ spanned by $a_1,\dots,a_n$, acts on $(\mathbb K^n)^*$ with an open orbit, via the action $\rho^*: \mathfrak{B}\times (\mathbb K)^*\to  (\mathbb K)^*,$ $(a,f)\mapsto \rho^*(a)f:=-f\circ a.$ 
For any integer $p$, with $1\leq p\leq n$  and for any  subset  $\{a_{i_1},\dots,a_{i_p}\}$ of $p$ elements of  $ S$, denote by $\mathbb K[a_{i_1},\dots,a_{i_p}]$ the commutative associative $\mathbb K$-algebra of all polynomials in  $a_{i_1},\dots,a_{i_p}$, with coefficients in $\mathbb K$. Then we have $\dim\mathbb K[a_{i_1},\dots,a_{i_p}]\le n\;,$  in particular, $\mathbb K[a_{1},\dots,a_{n}] =\mathfrak{B}  \;.$
 \end{theorem}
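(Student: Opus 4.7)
My plan is to derive this theorem essentially as a corollary of Theorem \ref{thm:LSA-2-solvable-Frobenius}; the bulk of the work has already been done in establishing that the induced LSA on a $2$-step solvable Frobenius Lie algebra is, on the $\mathfrak{B}$-part, given by matrix multiplication up to sign. The role of Theorem \ref{maintheorem} is essentially to unpack what this algebraic closure property implies for polynomial algebras generated by subsets of $S$.

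First, I would use the open-orbit hypothesis together with Theorem \ref{thm:structure-2-step-Frobenius}(A) to form the $2$-step solvable Frobenius Lie algebra $\mathcal{G} = \mathfrak{B} \ltimes_\rho \mathbb{K}^n$, with the exact symplectic form $\omega = \partial \alpha$ coming from any $\alpha \in (\mathbb{K}^n)^*$ with open orbit under $\rho^*$. The choice of complement $\mathfrak{B}$ here is precisely the one needed to apply the second part of Theorem \ref{thm:LSA-2-solvable-Frobenius}, since $\mathfrak{B}$ is already realised as an $n$-dimensional space of commuting matrices.

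Next, invoking Theorem \ref{thm:LSA-2-solvable-Frobenius}, the induced LSA satisfies $a \star b = -ab$ for $a,b \in \mathfrak{B}$, so in particular the ordinary matrix product $ab$ lies in $\mathfrak{B}$. Thus $\mathfrak{B}$ is closed under matrix multiplication, i.e.\ it is an associative $\mathbb{K}$-subalgebra of $\mathcal{M}(n,\mathbb{K})$. Moreover, the same theorem identifies the principal element as $v_0 = -\mathbb{I}_{\mathbb{K}^n} \in \mathfrak{B}$, so $\mathfrak{B}$ contains $\mathbb{I}_{\mathbb{K}^n}$ and is therefore a \emph{unital} commutative associative subalgebra of $\mathcal{M}(n,\mathbb{K})$.

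Finally I conclude: for any subset $\{a_{i_1},\dots,a_{i_p}\} \subseteq S$, every polynomial expression $p(a_{i_1},\dots,a_{i_p})$ with coefficients in $\mathbb{K}$ lies in $\mathfrak{B}$, because $\mathfrak{B}$ is a unital subalgebra containing the generators. Hence $\mathbb{K}[a_{i_1},\dots,a_{i_p}] \subseteq \mathfrak{B}$, which immediately yields the bound $\dim \mathbb{K}[a_{i_1},\dots,a_{i_p}] \leq \dim \mathfrak{B} = n$. For the particular case $p = n$, the reverse inclusion $\mathfrak{B} \subseteq \mathbb{K}[a_1,\dots,a_n]$ is trivial since $\mathfrak{B} = \mathbb{K}a_1 \oplus \cdots \oplus \mathbb{K}a_n$, proving the equality $\mathbb{K}[a_1,\dots,a_n] = \mathfrak{B}$. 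There is no real obstacle here: the nontrivial content has been absorbed into Theorem \ref{thm:LSA-2-solvable-Frobenius}, and the only point to emphasise is that the identity matrix belongs to $\mathfrak{B}$, which allows one to include constant terms of polynomials in the argument.
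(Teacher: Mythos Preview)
Your proof is correct and follows essentially the same approach as the paper: both invoke Theorem~\ref{thm:structure-2-step-Frobenius} to build the Frobenius Lie algebra $\mathcal G=\mathfrak{B}\ltimes\mathbb K^n$, then apply Theorem~\ref{thm:LSA-2-solvable-Frobenius} to deduce that $\mathfrak{B}$ is closed under matrix multiplication, and conclude that $\mathbb K[a_{i_1},\dots,a_{i_p}]\subseteq\mathfrak{B}$. Your explicit observation that $\mathbb I_{\mathbb K^n}=-v_0\in\mathfrak{B}$ (hence $\mathfrak{B}$ is unital, so constant terms of polynomials are handled) is a useful point that the paper leaves implicit.
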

\begin{proof}
Theorem \ref{maintheorem} is a consequence of Theorem \ref{thm:structure-2-step-Frobenius} and  Theorem \ref{thm:LSA-2-solvable-Frobenius}.  Indeed, if  $\mathfrak S:=\{a_1,\dots,a_n\}$ is a set of $n$ linearly independent mutually commuting $n\times n$ matrices with entries in a field $\mathbb K$ of characteristic zero, we consider the vector subspace  $ \mathfrak{B}:=\mathbb K a_1\oplus\dots\oplus \mathbb K a_n$ of  $\mathcal M(n,\mathbb K)$ and naturally endow it with the structure of an Abelian Lie algebra. First, using Theorem \ref{thm:structure-2-step-Frobenius} we embed $\mathfrak{B}$ as an Abelian  Cartan subalgebra of a 2-step solvable Frobenius Lie algebra $(\mathcal G,\partial\alpha).$  Then we use Theorem \ref{thm:LSA-2-solvable-Frobenius} to show that the LSA $\star$ of  $(\mathcal G,\partial\alpha)$ preserves $\mathfrak{B}$. Better yet, we show that the restriction of $\star$ to $\mathfrak{B}\times \mathfrak{B}$ coincides with the usual product of matrices of $\mathfrak{B}$, up to a sign. In particular, the product $ab$  of any two matrices $a,b\in\mathfrak{B}$, lies again in $\mathfrak{B}$. That is precisely equivalent to the equality
$\mathbb K[a_1,\dots,a_n] = \mathfrak{B},$
or, equivalently,  for any $p=1,\dots,n$ and for every subset $\{a_{i_1},\dots,a_{i_p}\}$ of $p$ elements of  $\mathfrak{S}$, the vector space underlying the $\mathbb K$-algebra
$\mathbb K[a_{i_1},\dots,a_{i_p}]$, is a vector subspace of $\mathfrak{B}$,
which implies the needed inequalities
$\dim\mathbb K[a_{i_1},\dots,a_{i_p}] \leq \dim\mathfrak{B}=n.\nonumber$
\end{proof}

\begin{definition}We say that a system  $S:=\{M_1,\dots,M_p\}$ of matrices is polynomially independent if none of the matrices $M_j\in S$ can be written as a linear combination of polynomials of the remaining elements of $S$. That is $M_j$ is not an element of the algebra $\mathbb K[M_1,\dots,M_{j-1}, M_{j+1},\dots,M_p]$ of polynomials of elements of $S\setminus\{M_j\}$.
The degree of polynomial freedom of the system $S$ or of the algebra  $\mathbb K[M_1,\dots,M_p]$  is the minimum number $q\leq p$ of polynomially independants $M_{j_1},\dots,M_{j_q}\in S$, such that   $\mathbb K[M_1,\dots,M_p] =  \mathbb K[M_{j_1},\dots,M_{j_q}]$.
\end{definition}
\begin{remark}Although its proof has been made simple, thanks to tools from affine geometry, Theorem \ref{maintheorem} is yet powerful in application, as can be seen in Examples \ref{example1-3D},  \ref{example2-8D}, \ref{example-general-nD}. In particular, Theorem \ref{maintheorem} combined with Theorem \ref{theorem:MASAs}, give a receipe consisting of a simple algorithm for constructing MASAs  (resp. maximal Abelian nilpotent subalgebras (MANS)) of dimension $n$ (resp. of dimension $\leq n-1$) of $\mathfrak{gl}(n,\mathbb K)$ or  $\mathfrak{sl}(n,\mathbb K)$ with a given degree of polynomial freedom. 
\end{remark}
\begin{remark}\label{rmq:nonderogatory}
From Lemma \ref{nonderogatorymatrices}, if $M$ is a nonderogatory  $n\times n$ matrix,  then the algebra $\mathfrak{B}:=\mathbb K[M]$ of polynomials in $M$ is an $n$-dimensional MASA of $\mathfrak{gl}(n,\mathbb K)$. The 2-step solvable Lie algebra $\mathcal G_M:=\mathbb K[M]\ltimes \mathbb K^n$ is a Frobenius Lie algebra (Theorem \ref{thm:structure-2-step-Frobenius}).  Such Frobenius  Lie algebras are studied in details and fully classified in Section \ref{chap:classification-nonderogatory}. Obviously, if  $M$ and $N$  are $n\times n$ matrices such that $M=PNP^{-1}$, for some  invertible matrix $P$ then $\mathbb K[M]$ and $\mathbb K[N]$ are conjugate and  $\mathcal G_M$ and $\mathcal G_N$  
are isomorphic  via the linear map  $\xi:\mathcal G_N \to \mathcal G_M$,  $\xi(a)=PaP^{-1}$, $ \xi(x)=Px$, for any $a\in \mathbb K[N]$,
$x\in \mathbb K^n$.
\end{remark}
\subsection{Example \ref{example1-3D}}\label{example1-3D} The commutative $\mathbb R$-subalgebra of $\mathcal{M}(3,\mathbb R)$ spanned by the system of matrices $\{e_2:=E_{1,2} , e_3:=E_{1,3}\}$,
coincides with the 3-dimensional vector space $\mathfrak{B}_{3,1}$ spanned by $\{e_1:=\mathbb I_{\mathbb R^3}$, $e_2$, $e_3\}$ which is also an Abelian Lie subalgebra of $\mathfrak{gl}(3,\mathbb R)$. 
In the canonical basis   $(\tilde e_1, \tilde e_2,\tilde e_3)$ of $\mathbb R^3$, with dual basis $(\tilde e_1^*,\tilde e_2^*,\tilde e_3^*)$, the linear form $\tilde e_1^*$ satisfies $\tilde e_1^*\circ e_1 =\tilde e_1^*$,   $\tilde e_1^*\circ e_2 = \tilde e_2^*$,  $\tilde e_1^*\circ e_3 =\tilde  e_3^*$. Thus $e_1^*$ has an open orbit for the action $\mathfrak{B}_{3,1}\times(\mathbb R^3)^*\to (\mathbb R^3)^*,$ $(a,f)\mapsto -f\circ a$. 
So the system $\{ E_{1,2},E_{1,3}\}$ has been completed into a system $\{e_1, e_2,e_3\}$ of linearly independant matrices satisfying the condition of Theorem \ref{maintheorem}.  
 One sees that $\mathfrak{B}_{3,1}$ does not contain any nonderogatory matrix, as the characteristic polynomial $p_M(\lambda) =(x-\lambda)^3$  of any of its elements $M:=xe_1+ye_2+ze_3$,  is different from its minimal polynomial. Indeed, the equality $(x-M)^2=0$ implies that the minimal polynomial of $M$ divides $(x-\lambda)^2$. More precisely, $\mathfrak{B}_{3,1}$ is not the algebra of polynomials of any nonderogatory matrix. 
 In  the basis $(e_1,e_2,e_3,e_4=\tilde e_1,e_5=\tilde e_2,e_6=\tilde e_3)$, the Lie bracket of the Lie algebra  $\mathcal G_{3,1}:=\mathfrak{B}_{3,1}\ltimes \mathbb R^3$,
 reads  $[e_1,e_{3+j}] = e_{3+j}$, $j=1,\dots,3$, $[e_2,e_5]=e_4$, $[e_3,e_6]=e_4$. So $e_4^*$ is a Frobenius functional, as $\partial e_4^*=-e_1^*\wedge e_4^*-e_2^*\wedge e_5^*-e_3^*\wedge e_6^*$ is a symplectic form on $\mathcal G_{3,1}.$ One also notes that $\mathcal G_{3,1}$ is a 1D extension of the Heisenberg Lie algebra $\mathcal H_{5}$ spanned by $e_2,e_3,e_4,e_5,e_6$  and  $\mathcal H_{5}$ is hence its nilradical.

\subsection{Example \ref{example2-8D}}\label{example2-8D} 
Let $\mathfrak{B}_{4,2}$  be the commutative subalgebra of $\mathfrak{gl}(4,\mathbb R)$ spanned by the $4\times 4$ matrices 
$e_1:=\mathbb I_{\mathbb R^4}$, $e_2=M_1:=E_{1,2}+E_{2,3}$, $e_3:=(M_1)^2=E_{1,3}$, $e_4=M_2:=E_{1,4}$. 
   If we let $(\tilde e_1, \dots,\tilde e_4)$ stand for the canonical basis of $\mathbb R^4$, with dual basis $(\tilde e_1^*,\dots,\tilde e_4^*)$, we clearly see that 
$\tilde e_1^*$ satisfies $\tilde e_1^*\circ e_1 =\tilde e_1^*$,   $\tilde e_1^*\circ e_2 = \tilde e_2^*$,  $\tilde e_1^*\circ e_3 =\tilde  e_3^*$,  $\tilde e_1^*\circ e_4 = \tilde e_4^*$. Thus $e_1^*$ has an open orbit for the action $\mathfrak{B}_{4,2}\times(\mathbb R^4)^*\to (\mathbb R^4)^* $, $(a,f)\mapsto - f\circ a.$ So $\{M_1,M_2\}$ has been completed into a system $\{e_1, e_2,e_3,e_4\}$ of linearly independent matrices satisfying the condition of Theorem \ref{maintheorem}.  
One has  $M_1^3 =M_1M_2=M_2^2=0$. So any polynomials in elements of $\mathfrak{B}_{4,2}$ is a linear combination of $e_1,$ $e_2,$ $e_3,$ $e_4$. Precisely the equality  $\mathfrak{B}_{4,2} =\mathbb R[M_1,M_2]$ holds. 
 Note that 
$\mathfrak{B}_{4,2}$ does not contain any nonderogatory matrix. Indeed the characteristic polynomial of any matrix $M:=xe_1+ye_2+ze_3+te_4$ is $p_M(\lambda) =(x-\lambda)^4$. But we also have $(x-M)^3=0.$ Thus the minimal polynomial of $M$ divides $(x-\lambda)^3$ and is hence different from its characteristic polynomial. So $\mathfrak{B}_{4,2}$ is not the algebra of polynomials of any nonderogatory matrix. In the corresponding    2-step solvable Frobenius Lie algebra 
 $\mathcal G:=\mathfrak{B}_{4,2}\ltimes \mathbb R^4$, with basis $(e_j,e_{4+j}=\tilde e_j,j=1,2,3,4)$, the Lie bracket reads  $[e_1,e_{4+j}] = e_{4+j}$, $j=1,\dots,4$, $[e_2,e_6]=e_5$, $[e_2,e_7]=e_6$, $[e_3,e_7]=e_5$, $[e_4,e_8]=e_5$ and  $\partial e_5^*=-e_1^*\wedge e_5^*-e_2^*\wedge e_6^*-e_3^*\wedge e_7^*-e_4^*\wedge e_8^*$ is a symplectic form on $\mathcal G.$ So $e_5^*$ is a Frobenius functional.
 
\subsection{Example \ref{example-general-nD}}\label{example-general-nD}  Examples \ref{example1-3D} and \ref{example2-8D} generalize to any dimension $n\ge 3$, into a family of ($n-1$) Frobenius Lie algebras $\mathcal G_{n,p}:=\mathfrak{B}_{n,p}\ltimes \mathbb R^n$,  $p=1,\dots,n-1$, all 2-step solvable. The $\mathfrak{B}_{n,p}$'s are pairwise non-isomorphic commutative algebras of polynomials in $n-1, n-2,\dots,2, 1$ matrices, respectively, satisfying the condition of Theorem \ref{maintheorem}.
 Let $n\ge 3$ be an integer. For a given $p$, with $1\leq p\leq n-1,$ set $M_{n,p}:=\displaystyle\sum_{l=1}^{p}E_{l,l+1}$, so that $M_{n,p}^{j-1}=\displaystyle\sum_{i=1}^{p-j+2} E_{i,i+j-1}$,  for  $2\leq j\leq p+1$ and
$M_{n,p}^p=E_{1,p+1}$, and  $\; M_{n,p}^{p+1}=0$. 
Let $\mathfrak{B}_{n,p}$  be the Abelian subalgebra of $\mathfrak{gl}(n,\mathbb R)$ spanned by the matrices $e_j,$ $j=1,\dots,n$, where
$e_j:=M_{n,p}^{j-1}$, $j=1,\dots,p+1$,  and for $j=p+2,\dots,n$, $e_{j}:=E_{1,j}$. In 
 the canonical basis $(\tilde e_1, \dots,\tilde e_n)$ of $\mathbb R^n$, with dual basis $(\tilde e_1^*,\dots,\tilde e_n^*)$, we have
\begin{eqnarray}M_{n,p}\tilde e_1&=&M_{n,p}\tilde e_{q}=0,\;\; q\ge p+2, \nonumber\\
M_{n,p}^{k}\tilde e_q&=&M_{n,p}^{k-1}\tilde e_{q-1}=M_{n,p}^{k-j}\tilde e_{q-j}, \;\;  1\leq j\leq\text{min}(k,q-1), \forall q, \; 2\leq q\leq p+1.\nonumber
\end{eqnarray}
So if $k\ge q$, then $M_{n,p}^{k}\tilde e_q=0$,  and  if $k\leq q-1\leq p$, then $M_{n,p}^{k}\tilde e_q=\tilde e_{q-k}$.
As one can see, we have 
$\tilde e_1^*\circ e_j= \tilde e_1^*\circ M_p^{j-1} = \tilde e_j^*$, $j=1,\cdots,p+1$ and $\tilde e_1^*\circ e_j=\tilde e_1^*\circ E_{1,j}=\tilde e_j^*$, for $j=p+2,\cdots,n$.
 This shows that $\tilde e_1^*$ has an open orbit for the contragredient action of $\mathfrak{B}_{n,p}$ on $(\mathbb R^n)^*$. So the set  $\{M_{n,p},E_{1,p+2},\dots,E_{1,n}\}$ of $n-p$ matrices, has been completed into a system $S=\{e_1,\dots,e_n\}$ of $n$ linearly independant matrices satisfying the condition of Theorem \ref{maintheorem}. The space of all polynomials in elements of $S$ is thus equal to 
$\mathbb R[M_{n,p},E_{1,p+2},\dots,E_{1,n}] =\mathfrak{B}_{n,p}$, so the degree of polynomial freedom of $S$ is $n-p$. Again, $\mathfrak{B}_{n,p}$ does not contain any nonderogatory matrix, as long as the degree of polynomial freedom is $n-p$ is greater or equal to $2$. Thus, $\mathfrak{B}_{n,p}$ is not the space of polynomials of a nonderogatory matrix, if $p\neq n-1$. In the basis 
$(e_j, e_{n+j}=\tilde e_j,\; j=1,\dots,n)$ the Lie bracket of the  2-step solvable Frobenius Lie algebra  
 $\mathcal G_{n,p}:=\mathfrak{B}_{n,p}\ltimes \mathbb R^n$,  is given by the following table  
\begin{eqnarray}
\; [e_1,e_{n+j}] &=& e_{n+j} \;, \; j=1,\dots, n,\nonumber\\ 
\; [e_p,e_{n+q}] &=&0, \text{ if } p\ge q+1,\;\;  [e_k,e_{n+q}]=e_{n+q-k+1} \text{ if } 1\leq k \leq q\leq p+1,
\nonumber\\ 
\; [e_q,e_{n+q}] &=& e_{n+1},  \text{ for }  q=p+2,\dots,n.
\end{eqnarray}
The form  $e_{n+1}^*$ is a Frobenius functional. More precisely, $\partial e_{n+1}^*=-\displaystyle\sum_{i=1}^{n}e_j^*\wedge e_{n+j}^*$ is a symplectic form on $\mathcal G_{n,p}.$ Note that, for $n\ge 3,$ in the family ($\mathcal G_{n,p}$)$_{1\leq p\leq n-1}$, two Lie algebras $\mathcal G_{n,p}$ and $\mathcal G_{n,q}$ are isomorphic if and only if $p=q$. This is due to the fact that the degree of polynomial degree of $\mathfrak{B}_{n,p}$ is $n-p$ which is obviously different from $n-q$ if $p\neq q$, so  $\mathfrak{B}_{n,p}$ and  $\mathfrak{B}_{n,q}$ cannot be conjugate, unless $p=q$. One may also argue that, although each $\mathcal{G}_{n,p}$ has a codimension $1$ nilradical $\mathcal N_{n,p}$ spanned by $(e_2,\dots,e_{2n})$, the derived ideal $[\mathcal N_{n,p},\mathcal N_{n,p}]$ has dimension $p$ and is spanned by $(e_{n+1},\dots, e_{n+p})$. So whenever $p\neq q$, the nilradicals $\mathcal N_{n,p}$ and $\mathcal N_{n,q}$ are not isomorphic and hence, neither are the Lie algebras $\mathcal G_{n,p}$ and $\mathcal G_{n,q}$.
The family ($\mathcal G_{n,p}$)$_{1\leq p\leq n-1}$, has two special cases. (1) The case $p=n-1$ is the only one where the degree of polynomial freedom is $1$, so $\mathfrak{B}_{n,n-1}$ is the algebra $\mathbb R[M_{n,n-1}]$ of polynomials of the nonderogatory matrix $M_{n,n-1}.$  The 2-step-solvable Lie algebras given by nonderogatory matrices are discussed  in Section \ref{chap:classification-nonderogatory} 
  where a full classication is supplied, among other results. In order to keep the same notations as in Section \ref{chap:classification-nonderogatory}, we  let $\mathfrak{D}_0^n$ stand for $\mathcal G_{n,n-1}.$ 
Note that, although we have considered the case $n\ge 3$, one remarks that if we use  $p=n-1$ when $n=2$, 
 we get the $2\times 2$ nonderogatory real matrix $M_{2,1}:=E_{1,2}$. The Lie bracket of the Lie algebra
$\mathcal G_{2,1} := \mathbb R[M_{2,1}]\ltimes \mathbb R^2$  is expressed in the basis $(e_1:=\mathbb I_{\mathbb R^2},e_2=M_{2,1},e_3:=\tilde e_1,e_4:=\tilde e_2)$ as follows, 
$[e_1,e_3]=e_3$, $[e_1,e_4]=e_4$, $[e_2,e_4]=e_3$. The linear form $e_3^*$ is a Frobenius functional and $\partial e_3^*=-e_1^*\wedge e_3^*-e_2^*\wedge e_4^*$. We also set $\mathfrak{D}_0^2:=\mathcal G_{2,1}$.
(2) As regards the case $p=1$, the algebra $\mathcal B_{n,1}$ has $n-1$ degrees of polynomial freedom. The nilradical $\mathcal N_{n,1}$ of $\mathcal G_{n,1}$ is the $(2n-1)$-dimensional Heisenberg Lie algebra $\mathcal H_{2n-1}$. It is spanned by $(e_{j}:=E_{1,j}, j=2,\dots,n, e_{n+k}, \; k=1,\dots,n)$,  with Lie brackets $[e_j,e_{n+j}] =e_{n+1}$, $j=2,\dots,n.$ Note also that the $n-1$ spaces $L_{n,p}=$span($e_2,e_3,\dots,e_n$)  are all $(n-1)$-
dimensional Abelian subalgebras of $\mathfrak{sl}(n,\mathbb R)$, which according to Theorem \ref{theorem:MASAs}, are pairwise non-conjugate MASAs of $\mathfrak{sl}(n,\mathbb R).$

\section{On the classification of 2-step solvable Frobenius Lie algebras}\label{sect:classification}
\subsection{Classification of n-dimensional MASAs and Frobenius Lie algebras}
The following Lemma \ref{lemma:conjugaison-MASAs}  is important in many aspects. It deepens the link between  2-step solvable Lie algebras and MASAs. In general, it is not obvious to tell if two MASAs are conjugate or not.  Invariants (cohomologies, derivations, subalgebras, series of ideals, ...) of the corresponding 2-step solvable Lie algebras may help to tell them apart in a simple way, as we do e.g. in Example  \ref{example-general-nD}. In physics, MASAs are often studied in relation with symmetries of dynamical systems, in this regard Lemma \ref{lemma:conjugaison-MASAs} suggests that invariants of   2-step solvable Lie algebras may encode  informations on such dynamical systems and their governing equations.
\begin{lemma}\label{lemma:conjugaison-MASAs} Let $\mathcal G_1:=\mathfrak B_1\ltimes \mathbb K^n$  and $\mathcal G_2:=\mathfrak B_2\ltimes \mathbb K^n$ be 2-step solvable Lie algebras over $\mathbb K,$ where $\mathfrak B_1$ and $\mathfrak B_2$ are two Abelian subalgebras of $\mathcal M(n,\mathbb K)$.
Then $\mathcal G_1$ and $\mathcal G_2$ are isomorphic if and only if   $\mathfrak B_1$ and $\mathfrak B_2$  are conjugate. That is, if and only if there exists a linear invertible map $\phi:\mathbb K^n\to\mathbb K^n$ such that $\mathfrak B_2=\phi\mathfrak B_1\phi^{-1}$, or equivalently, every element of $\mathfrak B_2$ is of the form $\phi\circ a\circ\phi^{-1} $ for some $a\in\mathfrak B_1$. More precisely,  a linear map $\psi:\mathcal G_1\to\mathcal G_2$ is  a Lie algebra isomorphism if  and only if there exist $x_0\in \mathbb K^n$  and some invertible linear map
 $\phi :\; \mathbb K^n\to\mathbb K^n$ such that $\mathfrak B_2=\phi\mathfrak B_1\phi^{-1}$ and  for any $(a,x)\in \mathfrak B_1\ltimes \mathbb K^n$
\begin{eqnarray}
\psi (a+x) =  \phi\circ a \circ \phi^{-1} + \phi\circ a \circ \phi^{-1}(x_0) +\phi(x) .\end{eqnarray}
\end{lemma}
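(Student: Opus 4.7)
The plan is to prove the biconditional in two directions and deduce the main iff between isomorphism and conjugacy from the more precise parametrization. Throughout, I will use two facts from the Frobenius setting of the paper: that $\mathbb{I}_{\mathbb{K}^n}\in\mathfrak{B}_i$ (Theorem~\ref{thm:LSA-2-solvable-Frobenius}) and that $\mathbb{K}^n$ is intrinsic to $\mathcal{G}_i$ as the derived ideal $[\mathcal{G}_i,\mathcal{G}_i]$ (which follows from the first fact since $\mathbb{I}_{\mathbb{K}^n}\cdot\mathbb{K}^n=\mathbb{K}^n$), so any Lie algebra isomorphism must respect it.

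For the sufficiency direction I would start with a pair $(\phi,x_0)$ satisfying $\mathfrak{B}_2=\phi\mathfrak{B}_1\phi^{-1}$, define $\psi$ by the stated formula, and verify routinely that it is linear, bijective (the preimage of $b+y$ being $\phi^{-1}b\phi+\phi^{-1}(y-b(x_0))$), and bracket-preserving. In the bracket computation the terms involving $x_0$ cancel because $\mathfrak{B}_2$ is Abelian, so $[\psi(a_1+x_1),\psi(a_2+x_2)]$ collapses to $\phi(a_1x_2-a_2x_1)=\psi([a_1+x_1,a_2+x_2])$.

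For the necessity, given a Lie algebra isomorphism $\psi$, the intrinsic characterization of $\mathbb{K}^n$ gives $\psi(\mathbb{K}^n)=\mathbb{K}^n$, so $\phi:=\psi|_{\mathbb{K}^n}$ is a linear automorphism of $\mathbb{K}^n$. For each $a\in\mathfrak{B}_1$ write $\psi(a)=\bar a+y_a$ with $\bar a\in\mathfrak{B}_2$ and $y_a\in\mathbb{K}^n$; applying $\psi$ to the identity $[a,x]=ax$ forces $\bar a\,\phi(x)=\phi(ax)$ for every $x$, hence $\bar a=\phi a\phi^{-1}$. Passing to the quotient $\mathcal{G}_i/\mathbb{K}^n$ then yields the equality $\mathfrak{B}_2=\phi\mathfrak{B}_1\phi^{-1}$.

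The most delicate step, which I expect to be the main obstacle, is to produce a single $x_0\in\mathbb{K}^n$ such that $y_a=\phi a\phi^{-1}(x_0)$ for every $a\in\mathfrak{B}_1$. Expanding $0=\psi([a,b])=[\psi(a),\psi(b)]$ for $a,b\in\mathfrak{B}_1$ and using that $\mathfrak{B}_2$ and $\mathbb{K}^n$ are both Abelian yields the symmetry $a\tilde y_b=b\tilde y_a$ in $\mathbb{K}^n$, where $\tilde y_a:=\phi^{-1}(y_a)$. This cocycle-like condition alone is too weak to recover $\tilde y$ as an orbit map, but the Frobenius hypothesis $\mathbb{I}_{\mathbb{K}^n}\in\mathfrak{B}_1$ saves the day: substituting $b=\mathbb{I}_{\mathbb{K}^n}$ forces $\tilde y_a=a\,\tilde y_{\mathbb{I}}$, and setting $x_0:=\phi(\tilde y_{\mathbb{I}})$ delivers the required formula. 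The main iff between isomorphism and conjugacy is then immediate: any isomorphism produces such a $\phi$ conjugating $\mathfrak{B}_1$ to $\mathfrak{B}_2$, and conversely any conjugating $\phi$ produces an explicit isomorphism via the formula with $x_0=0$.
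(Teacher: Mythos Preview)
Your proof is correct and follows essentially the same approach as the paper's: both use that $\psi$ preserves the derived ideal $\mathbb{K}^n$ to define $\phi=\psi|_{\mathbb{K}^n}$, both derive $\bar a=\phi a\phi^{-1}$ from $\psi([a,x])=[\psi(a),\psi(x)]$, and both obtain the $x_0$ formula by expanding $0=\psi([a,b])$ and specializing $b=\mathbb{I}_{\mathbb{K}^n}$. The only cosmetic difference is that you pull back via $\tilde y_a=\phi^{-1}(y_a)$ to state the symmetry $a\tilde y_b=b\tilde y_a$ before specializing, while the paper goes directly to $b=e_1$; you are also more explicit than the paper in flagging that the argument uses $\mathbb{I}_{\mathbb{K}^n}\in\mathfrak{B}_i$ (hence $[\mathcal G_i,\mathcal G_i]=\mathbb{K}^n$), which is indeed an implicit hypothesis drawn from the Frobenius setting.
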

\begin{proof}Suppose $\mathcal G_1$ and $\mathcal G_2$ are isomorphic under some Lie algebra isomorphism $\psi:\mathcal G_1\to\mathcal G_2.$
As the derived ideal $[\mathcal G_1,\mathcal G_1]=\mathbb K^n$ must be mapped to $[\mathcal G_2,\mathcal G_2]=\mathbb K^n$, 
the components of $\psi$ are linear maps 
$\psi_{1,1}:\mathfrak B_1\to\mathfrak B_2$,  $\psi_{1,2}:\mathfrak B_1\to\mathbb K^n$,  $\phi:\mathbb K^n \to\mathbb K^n$, with $\psi_{1,1}$ and $\phi$ invertible, such that 
 $\psi(a)=\psi_{1,1}(a) +\psi_{1,2} (a)$ and $\psi(x)=\phi(x)$ for any $a\in\mathfrak B_1$ and $x\in\mathbb K^n$. 
We deduce the equality $\psi_{1,1}(a)=\phi \circ a \circ \phi^{-1}$, from the identity $\phi(ax)=\psi([a,x])= [\psi_{1,1} (a)+\psi_{1,2}(a),\phi(x)] = \Big(\psi_{1,1} (a)\circ\phi\Big)(x)$. 
In particular $\phi \circ a \circ \phi^{-1}\in\mathfrak B_2$, for any $a\in\mathfrak B_1$, which is equivalent also to the  equality $\phi \circ \mathfrak B_1 \circ \phi^{-1}= \mathfrak B_2$. 
Taking $b=e_1:=\mathbb I_{\mathbb K^n} \in \mathfrak B_1$ and $x_0:=\psi_{1,2}(e_1)$ in the identity
 $0=\psi([a,b])= [\psi_{1,1} (a)+\psi_{1,2}(a),\psi_{1,1} (b)+\psi_{1,2}(b)]=\psi_{1,1} (a)\psi_{1,2}(b)- \psi_{1,1} (b) \psi_{1,2}(a)$ yields $\psi_{1,2} (a)=\phi\circ a\circ\phi^{-1}x_0$, for any $a\in\mathfrak B_1.$ Note that, $\psi$ being a Lie algebra isomorphism implies $\psi_{1,1} (e_1) =\mathbb I_{\mathbb K^n}$.
Conversely, suppose there exists an invertible linear map $\phi:\mathbb K^n\to\mathbb K^n$ such that $\mathfrak B_2=\phi\mathfrak B_1\phi^{-1}$. Then $\psi:\mathcal G_1\to\mathcal G_2$  given for any $a\in\mathfrak{B}_1$ and any $x\in\mathbb K^n$ by  $\psi (a+x) =  \phi\circ a \circ \phi^{-1}+ \phi\circ a \circ \phi^{-1}(x_0) +\phi(x)  $ is a Lie algebra isomorphism, for any $x_0\in\mathbb K^n.$
\end{proof}
\begin{proposition} The index of polynomial freedom of a commutative $\mathbb K$-algebra $\mathfrak{B}$ of matrices is an invariant of $\mathfrak{B}$. More precisely, if  two commutative algebras of matrices $\mathfrak{B}_1$ and $\mathfrak{B}_2$ are conjugate, or equivalently, if the Lie algebras $\mathfrak{B}_1\ltimes \mathbb K^n$ and $\mathfrak{B}_2\ltimes \mathbb K^n$ are isomorphic, then $\mathfrak{B}_1$ and $\mathfrak{B}_2$ have the same  index of polynomial freedom.
\end{proposition}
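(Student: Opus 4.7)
The plan is to reduce the statement to a purely algebraic fact: the notion of polynomial freedom is invariant under isomorphisms of associative $\mathbb{K}$-algebras, and conjugation of matrix subalgebras induces such an isomorphism.

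First, I would invoke Lemma \ref{lemma:conjugaison-MASAs} to replace the Lie algebra isomorphism hypothesis by the existence of an invertible linear map $\phi : \mathbb{K}^n \to \mathbb{K}^n$ with $\mathfrak{B}_2 = \phi \, \mathfrak{B}_1 \, \phi^{-1}$. This identifies both formulations of the hypothesis and lets me work with the associative matrix algebra structure directly. Define $\Phi : \mathfrak{B}_1 \to \mathfrak{B}_2$ by $\Phi(M) = \phi M \phi^{-1}$. Then $\Phi$ is clearly $\mathbb{K}$-linear and invertible (with inverse $N \mapsto \phi^{-1} N \phi$), and a direct check shows $\Phi(MN) = \phi M N \phi^{-1} = (\phi M \phi^{-1})(\phi N \phi^{-1}) = \Phi(M)\Phi(N)$. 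Hence $\Phi$ is an isomorphism of unital associative $\mathbb{K}$-algebras.

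The key step is then to observe that for any polynomial $p \in \mathbb{K}[X_1,\dots,X_k]$ and any $M_1,\dots,M_k \in \mathfrak{B}_1$, the equality
\begin{equation*}
\Phi\bigl(p(M_1,\dots,M_k)\bigr) = p\bigl(\Phi(M_1),\dots,\Phi(M_k)\bigr)
\end{equation*}
holds, as an immediate consequence of $\Phi$ being a $\mathbb{K}$-algebra homomorphism. Consequently, $\Phi$ maps the subalgebra $\mathbb{K}[M_1,\dots,M_k] \subseteq \mathfrak{B}_1$ isomorphically onto $\mathbb{K}[\Phi(M_1),\dots,\Phi(M_k)] \subseteq \mathfrak{B}_2$, and a family $\{M_{j_1},\dots,M_{j_q}\}$ is polynomially independent in $\mathfrak{B}_1$ if and only if $\{\Phi(M_{j_1}),\dots,\Phi(M_{j_q})\}$ is polynomially independent in $\mathfrak{B}_2$.

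Finally, let $q_1$ and $q_2$ denote the indices of polynomial freedom of $\mathfrak{B}_1$ and $\mathfrak{B}_2$ respectively. If $\{M_{j_1},\dots,M_{j_{q_1}}\}$ realizes the minimum for $\mathfrak{B}_1$, i.e.\ $\mathbb{K}[M_{j_1},\dots,M_{j_{q_1}}] = \mathfrak{B}_1$ with the $M_{j_i}$ polynomially independent, then applying $\Phi$ yields $\mathbb{K}[\Phi(M_{j_1}),\dots,\Phi(M_{j_{q_1}})] = \mathfrak{B}_2$ with the images still polynomially independent, so $q_2 \le q_1$. The reverse inequality follows by the same argument applied to $\Phi^{-1}$, giving $q_1 = q_2$. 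The only point requiring any care is verifying that polynomial independence is preserved, but since this property is phrased entirely in terms of the associative algebra structure, it is automatically transported by $\Phi$.
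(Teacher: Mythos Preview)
Your proof is correct and follows essentially the same approach as the paper: both invoke Lemma~\ref{lemma:conjugaison-MASAs} to reduce to a conjugation $\mathfrak{B}_2=\phi\mathfrak{B}_1\phi^{-1}$ and then observe that conjugation preserves polynomial independence. Your version is simply more explicit, spelling out that $\Phi(M)=\phi M\phi^{-1}$ is an associative $\mathbb{K}$-algebra isomorphism and deriving the equality of indices via two inequalities, whereas the paper condenses this to a single sentence.
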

\begin{proof}Suppose the Lie algebras $\mathfrak{B}_1\ltimes \mathbb K^n$ and $\mathfrak{B}_2\ltimes \mathbb K^n$ are isomorphic.  
Using Lemma \ref{lemma:conjugaison-MASAs},  let $\phi:\mathbb K^n\to\mathbb K^n$ be a linear invertible map such that $\mathfrak B_2=\phi\mathfrak B_1\phi^{-1}$. A system $(a_1,\dots,a_p)$ of elements of $\mathfrak B_1$ is polynomially independant if and only if $(\phi a_1\phi^{-1},\dots,\phi a_p \phi^{-1})$ is polynomially independant in  $\mathfrak B_2$.
\end{proof}
We have the following.

\begin{theorem}\label{theorem:MASAs}
 Let  $\mathfrak{B}$ be an  $n$-dimensional Abelian Lie subalgebra of $\mathfrak{gl}(n,\mathbb K).$  Write $\mathfrak{B}$ as $\mathfrak{B}=\mathbb I_{\mathbb R^n}\oplus L$, where $L$ is an $(n-1)$-dimensional Abelian subalgebra of $\mathfrak{sl}(n,\mathbb K)$. Suppose that, for 
 the canonical action $\rho$ of  $\mathfrak{B}$ on  $\mathbb K^n$   given by the ordinary multiplication $\rho(a)x:=ax$ of matrices $a\in\mathfrak{B}$ and vectors $x\in\mathbb K^n$, the corresponding  contragrediente action $\rho^*:  \mathfrak{B}\times (\mathbb K^n)^* \to  (\mathbb K^n)^*$,
 $(a,f) \mapsto \rho^*(a) f:=- f\circ a$, has an open orbit.   
Then $\mathfrak{B}$ (resp. $L$) is a maximal Abelian subalgebra (MASA) of $\mathfrak{gl}(n,\mathbb K)$ (resp. of $\mathfrak{sl}(n,\mathbb K)$). \end{theorem}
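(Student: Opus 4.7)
The plan is to show directly that $\mathfrak{B}$ coincides with its own centralizer in $\mathfrak{gl}(n,\mathbb{K})$. Take any $c\in\mathfrak{gl}(n,\mathbb{K})$ that commutes with every element of $\mathfrak{B}$; the aim is to deduce $c\in\mathfrak{B}$. The open-orbit hypothesis furnishes a basis $(a_1,\dots,a_n)$ of $\mathfrak{B}$ and a linear form $\alpha\in(\mathbb{K}^n)^*$ such that $\bigl(\rho^*(a_1)\alpha,\dots,\rho^*(a_n)\alpha\bigr)$ is a basis of $(\mathbb{K}^n)^*$. Expand $\rho^*(c)\alpha=\sum_{i=1}^n\lambda_i\,\rho^*(a_i)\alpha$ in this basis and put $\tilde c:=c-\sum_i\lambda_i a_i$. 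Then $\tilde c$ still commutes with $\mathfrak{B}$ and, by construction, $\rho^*(\tilde c)\alpha=0$, i.e.\ $\alpha\circ\tilde c=0$.

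Next, for each $i$, the commutation $a_i\circ\tilde c=\tilde c\circ a_i$ together with $\alpha\circ\tilde c=0$ gives $\alpha\circ a_i\circ\tilde c=\alpha\circ\tilde c\circ a_i=0$, which is exactly $(\rho^*(a_i)\alpha)\circ\tilde c=0$. Since the $\rho^*(a_i)\alpha$ span $(\mathbb{K}^n)^*$, it follows that $f\circ\tilde c=0$ for every $f\in(\mathbb{K}^n)^*$; hence $\tilde c=0$ and $c\in\mathfrak{B}$. Thus $\mathfrak{B}$ is its own centralizer and is a MASA of $\mathfrak{gl}(n,\mathbb{K})$.

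For the statement about $L$, I would transfer maximality from $\mathfrak{B}$. Any $c\in\mathfrak{sl}(n,\mathbb{K})$ that commutes with $L$ trivially commutes with $\mathbb{I}_{\mathbb{K}^n}$, hence with all of $\mathfrak{B}=\mathbb{K}\,\mathbb{I}_{\mathbb{K}^n}\oplus L$. By the first part $c\in\mathfrak{B}$, so $c=\mu\,\mathbb{I}_{\mathbb{K}^n}+\ell$ for some $\mu\in\mathbb{K}$ and $\ell\in L$. Taking traces and using $\ell\in L\subset\mathfrak{sl}(n,\mathbb{K})$, I get $n\mu=0$, whence $\mu=0$ in characteristic zero, so $c=\ell\in L$ and $L$ is a MASA of $\mathfrak{sl}(n,\mathbb{K})$.

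The only subtle step is the first one, where the open-orbit property is exploited twice: once to adjust $c$ modulo $\mathfrak{B}$ so that $\alpha\circ\tilde c=0$, and once to turn the family of identities $(\rho^*(a_i)\alpha)\circ\tilde c=0$ into the pointwise conclusion $\tilde c=0$. No further obstacle is expected, since the descent to $L$ is pure trace bookkeeping.
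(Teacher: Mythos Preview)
Your proof is correct and follows essentially the same approach as the paper: both arguments hinge on producing an element $\tilde c$ in the centralizer with $\alpha\circ\tilde c=0$, then using commutation to get $(\rho^*(a_i)\alpha)\circ\tilde c=0$ for all $i$ and concluding $\tilde c=0$ since the $\rho^*(a_i)\alpha$ span $(\mathbb K^n)^*$. The only cosmetic difference is that the paper frames this as a contradiction (assuming $c\notin\mathfrak B$, forming the $(n{+}1)$-dimensional space $\mathbb K c\oplus\mathfrak B$, and using a dimension count on the orbital map to find a nonzero kernel element), whereas you construct $\tilde c$ directly by subtracting the appropriate $\mathfrak B$-combination; your version is slightly cleaner, and your trace argument for $L$ spells out what the paper leaves as a one-line remark.
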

\begin{proof} Suppose $\rho^*:  \mathfrak{B}\times (\mathbb K^n)^* \to  (\mathbb K^n)^*$,
 $(a,f) \mapsto \rho^*(a) f:=- f\circ a$, has an open orbit and consider some $\alpha\in  (\mathbb K^n)^*$ with an open orbit. Thus, any basis $(a_1,\dots,a_n)$ of $ \mathfrak{B}$ gives rise to a basis  $( \rho^*(a_1)\alpha, \dots, \rho^*(a_n)\alpha)$ of $(\mathbb K^n)^*$ and the (linear) orbital map $Q: \mathfrak{B} \to  (\mathbb K^n)^*$, $Q(a) = \rho^* (a)\alpha$ is an isomorphism between the vector spaces  $\mathfrak{B}$ and $(\mathbb K^n)^*$. Just for convenience, we will let $(\hat e_1,\dots,\hat e_n)$ stand for the basis of $\mathbb K^n$ whose dual basis is $(\hat e_1^*=\rho^*(a_1)\alpha, \dots, \hat e_n^*=\rho^*(a_n)\alpha).$
Suppose $\tilde a\in\mathfrak{gl}(n,\mathbb K)$ is such that $[\tilde a, a] =0$ for any $a\in \mathfrak{B}$ and $\tilde a\neq 0$. Assume $\tilde a$ is not an element of 
 $\mathfrak{B}$, then  $ \mathfrak{\tilde B}:= \mathbb K\tilde a\oplus \mathfrak{B}$ is an $(n+1)$-dimensional Abelian subalgebra of $\mathfrak{gl}(n,\mathbb K)$. Because $\dim \mathfrak{\tilde B} = \dim   (\mathbb K^n)^*+1$, the orbital map $\tilde Q:\mathfrak{\tilde B} \to (\mathbb K^n)^*$,  also given by $\tilde Q(a) = \tilde \rho^* (a)\alpha= -\alpha\circ a$, must have a $1$-dimensional kernel. So, there exists some 
$\tilde b=k\tilde a+a_0\neq 0$, with $k\in\mathbb K$ and $a_0\in \mathfrak{B}$ such that $\tilde \rho^* (\tilde b)\alpha= -\alpha\circ \tilde b =0.$  Since $\tilde \rho^* (a_0)\alpha= \rho^* (a_0)\alpha=Q(a_0) \neq 0$ if $a_0\neq 0$, we must have $k\neq 0$. 
We then must have $ \tilde b x=0$, for any $x\in\mathbb K^n,$ or equivalently $\tilde b =0.$ Indeed, for any $x\in\mathbb K^n$, the expression of $\hat b x$ in the above basis is $\hat b x=\displaystyle\sum_{j=1}^n  \langle \hat e_j^*,\tilde b x\rangle \hat e_j.$ But the components $\langle \hat e_j^*,\tilde b x\rangle $ are all egal to zero for any $j=1,\dots,n$, as 
\begin{eqnarray}\langle \hat e_j^*,\tilde b x\rangle= \langle\rho^* (a_j)\alpha, \tilde b x\rangle =
- \langle\alpha, a_j \tilde b x\rangle  = - \langle\alpha, \tilde b  a_j x\rangle =  \langle\tilde \rho^* (\tilde b) \alpha,   a_j x\rangle =0. \nonumber\end{eqnarray}
Now, the equality $\tilde b=k\tilde a+a_0 =0$,  contradicts the assumption that $\tilde a$ is not in $\mathfrak{B}$. So $\tilde a$ must necessary be in $\mathfrak{B}$. This proves that $\mathfrak{B}$ is a MASA of $\mathfrak{gl}(n,\mathbb K)$. The last claim automatically follows, due to the fact that
a subalgebra $L$ of $\mathfrak{sl}(n,\mathbb K)$ is a MASA of $\mathfrak{sl}(n,\mathbb K)$, if and only if 
$\mathbb I_{\mathbb R^n}\oplus L$ is a MASA of  $\mathfrak{gl}(n,\mathbb K)$.
\end{proof}

\subsubsection{Example \ref{example2a}}\label{example2a} On the space  $\mathfrak B_{n,n}:=L_{n,n}\oplus \mathbb R \mathbb I_{\mathbb R^n}$,  where  $L_{n,n}$ is  the space of matrices $L_{n,n}=\{M:=m_nE_{1,n}+\displaystyle\sum_{j=2}^{n-1}m_j(E_{1,j}+E_{j,n}) \;,\; m_j\in\mathbb R, j=2,\dots,n\}$,
 we set $e_1:=\mathbb I_{\mathbb R^n}$ and $e_j:=E_{1,j}+E_{j,n}$ , $j=2,\dots,n-1$, 
$e_n=E_{1,n}$. Let $(\tilde e_j)$ stand again for the canonical basis of $\mathbb R^n$. One sees that 
$\mathfrak{B}_{n,n}$ is an Abelian subalgebra of $\mathfrak{gl}(n,\mathbb R)$, as any  $M=m_1e_1+\dots+m_ne_n$  and  $A=a_1e_1+\dots+a_ne_n$  in $\mathfrak{B}_{n,n}$ commute
\begin{eqnarray}
[M,A]&=& \displaystyle\sum_{j=2, p=2}^{n-1}m_ja_p[E_{1,j}+E_{j,n}, E_{1,p}+E_{p,n}]  
\nonumber\\
&=&  \displaystyle\sum_{j=2, p=2}^{n-1}m_ja_p(\delta_{j,p} - \delta_{j,p}) E_{1,n} =0.\nonumber\end{eqnarray}
 The characteristic polynomial of any  $M=m_1e_1+\dots+m_ne_n$ is ch($X$)$=(m_1-X)^n$ and
 $ (m_1-M)^{2}=(m_2^2+\dots+m_{n-1}^2)E_{1,n}$ and $ (m_1-M)^{3}=0$. So, up to a scaling, the minimal polynomial of $M$ is $(m_1-X)^3$.  Thus for any $n\ge 4,$ the algebra $\mathfrak B_{n,n}$ contains no nonderogatory matrix. More precisely, $\mathfrak B_{n,n}$ is not the algebra of polynomials in any nonderogatory matrix, for any $n\ge 4$.
We have $\tilde e_1^*\circ e_j= \tilde e_j^*$, $j=1,\dots,n.$ So $\tilde e_1^*$ has an open orbit. 
Hence $\mathfrak B_{n,n}$ satisfies the condition of Theorem \ref{maintheorem} and 
$\mathfrak B_{n,n}$ and $L_{n,n}$ are MASAs of $\mathfrak{gl}(n,\mathbb R)$ and $\mathfrak{sl}(n,\mathbb R)$, respectively, according to Theorem \ref{theorem:MASAs}.
 In  the basis $(e_j, e_{n+j}:=\tilde e_j), j=1,\dots,n,$ of the $2$-step solvable Lie algebra $\mathcal G_{n,n}:=\mathfrak{B}_{n,n}\ltimes \mathbb R^n$, with dual basis $(e_1^*,\dots,e_{2n}^*)$, the non-zero Lie bracket of $\mathcal G_{n,n}$ are 
\begin{eqnarray}
& [e_1,e_{n+j}] =e_{n+j}\;, \; j=1,\dots,n,\nonumber\\
& [e_j,e_{n+j}] =e_{n+1}\;, \; [e_j,e_{2n}] =e_{n+j}\;, \; j=2,\dots,n-1\;, \;\; [e_n,e_{2n}]=e_{n+1}\;,
\end{eqnarray}
so 
we have $\partial e_{n+1}^*=-\displaystyle\sum_{j=1}^ne_j^*\wedge e_{n+j}^*$. In other words,  $\mathcal G_{n,n}$ is a Frobenius Lie algebra and $e_{n+1}^*$ is a Frobenius functional. The  nilradical $\mathcal N_{n,n}:=$span$(e_2,\dots,e_{2n})$ of $\mathcal G_{n,n}$ is of
codimension $1$ and is the semi-direct sum $\mathcal N_{n,n}=\mathbb R e_{2n}\ltimes\Big( \mathcal H_{2n-3}\oplus\mathbb R e_{n}\Big)$ of the line $\mathbb R e_{2n}$ and the Abbena Lie algebra $ \mathcal H_{2n-3}\oplus\mathbb R e_{n}$ where the former acts on the latter by nilpotent derivations. Recall that the $(2n-2)$-dimensional Abbena Lie algebra $ \mathcal H_{2n-3}\oplus\mathbb R e_{n}$ is the direct sum of the line $\mathbb R e_{n}$ and the ($2n-3$)-dimensional Heisenberg Lie algebra  $ \mathcal H_{2n-3}=\text{span}(e_{2},\dots,e_{n-1},e_{n+1},\dots,e_{2n-1})$. The derived ideal $[\mathcal N_{n,n},\mathcal N_{n,n}]$ is ($n-1$)-dimensional and spanned by $(e_{n+1},\dots,e_{2n-1})$. So $\mathcal N_{n,n}$ is isomorphic to none of the nilradicals $\mathcal N_{n,p}$, $1\leq p\leq n-2$ of Example  \ref{example-general-nD}, as $[\mathcal N_{n,p},\mathcal N_{n,p}]$ is $p$-dimensional.  Recall that for $p=n-1$, the Lie algebra $\mathcal G_{n,n-1}$ is obtained from a nonderogatory matrix. Hence, altogether,  $\mathcal G_{n,n}$ is not isomorphic to any of the $n-1$ pairwise non-isomorphic Lie algebras $\mathcal G_{n,p}$ of Example  \ref{example-general-nD}.  
Thus from Theorem \ref{theorem:MASAs}, none of the MASAs $\mathfrak{B}_{n,p}$  of Example  \ref{example-general-nD} is conjugate to $\mathfrak{B}_{n,n}$.
\subsubsection{Example \ref{example23}} \label{example23} Consider the following space $L_{n,n}'$ of $n\times n$ matrices 
\begin{eqnarray}L_{n,n}'=\{M:=\displaystyle\sum_{j=2}^{n}m_j(E_{1,j}+E_{n-j+1,n}) \;,\; m_j\in\mathbb R, j=2,\dots,n\}.\end{eqnarray}
On the Abelian algebra $\mathfrak B_{n,n}':=\mathbb R \mathbb I_{\mathbb R^n}\oplus  L_{n,n}'$, 
 we set $e_1:=\mathbb I_{\mathbb R^n},$ $e_j:=E_{1,j}+E_{n-j+1,n},$ $j=2,\dots,n$. The Lie algebra 
$L_{n,n}'$ is Abelian. Indeed, the commutator of  any elements  $M=m_1e_1+\dots+m_ne_n$  and  $A=a_1e_1+\dots+a_ne_n$  of $L_{n,n}'$ is 
\begin{eqnarray}
[M,A]&=& \displaystyle\sum_{j=2, p=2}^{n}m_ja_p[E_{1,j}+E_{n-j+1,n}, E_{1,p}+E_{n-p+1,n}] 
\nonumber\\
&=&  \displaystyle\sum_{j=2, p=2}^nm_ja_p(\delta_{j,n-p+1} - \delta_{p,n-j+1}) E_{1,n} =0.\nonumber\end{eqnarray}
The characteristic polynomial of any element  $M=m_1e_1+\dots+m_ne_n$  of $\mathfrak{B}$ is $\chi(X)=(m_1-X)^n$ and
 $ (m_1-M)^{3}=0$. So, up to a scaling, the minimal polynomial of $M$ is $(m_1-X)^3$. So for any $n\ge 4,$ the algebra $\mathfrak B_{n,n}'$ contains no nonderogatory matrix. More precisely, $\mathfrak B_{n,n}'$ is not the algebra of polynomials in any nonderogatory matrix, for any $n\ge 4$.
We have $\tilde e_1^*\circ e_j= \tilde e_j^*$, $j=1,\dots,n-1,$ $\tilde e_1^*\circ e_n= 2\tilde e_n^*.$ So $\tilde e_1$ has an open orbit. Hence $\mathfrak B_{n,n}'$ and $L_{n,n}'$ are MASAs of $\mathfrak{gl}(n,\mathbb R)$ and $\mathfrak{sl}(n,\mathbb R)$, respectively, according to Theorem \ref{theorem:MASAs}.

 So in the dual basis $(e_1^*,\dots,e_{2n})$  of the basis $(e_j, e_{n+j}:=\tilde e_j), j=1,\dots,n,$ of the $2$-step solvable Lie algebra $\mathcal G_{n,n}':=\mathfrak{B}_{n,n}'\ltimes \mathbb R^n$, the non-zero Lie bracket of $\mathcal G_{n,n}'$ are 
\begin{eqnarray}
& [e_1,e_{n+j}] =e_{n+j}\;, \; j=1,\dots,n,\nonumber\\
& [e_j,e_{n+j}] =e_{n+1}\;, \; [e_j,e_{2n}] =e_{2n-j+1}\;, \; j=2,\dots,n-1\;, \;\; [e_n,e_{2n}]=2e_{n+1}\;,\nonumber\\
\end{eqnarray}
so 
we have $\partial e_{n+1}^*=-\displaystyle\sum_{j=1}^{n-1}e_j^*\wedge e_{n+j}^*-2e_n^*\wedge e_{2n}^*$. In other words,  $\mathcal G_{n,n}'$ is a Frobenius Lie algebra and $e_{n+1}^*$ is a Frobenius functional. 
As in Example \ref{example2a}, the Lie algebra $\mathcal G_{n,n}'$ also has a codimension $1$ nilradical $\mathcal N_{n,n}'$  spanned by $e_2,\dots,e_{2n}$, which is the semi-direct sum $\mathcal N_{n,n}'=\mathbb R e_{2n}\ltimes\Big( \mathcal H_{2n-3}\oplus\mathbb R e_{n}\Big)$ of $\mathbb R e_{2n}$ and $ \mathcal H_{2n-3}\oplus\mathbb R e_{n}$ where the former acts on the latter by nilpotent derivations. So here again,  $\mathcal G_{n,n}'$ is not isomorphic to any of the $n-1$ pairwise non-isomorphic Lie algebras $\mathcal G_{n,p}$ of Example  \ref{example-general-nD} and, from Theorem \ref{theorem:MASAs}, none of the MASAs $\mathfrak{B}_{n,p}$  of Example  \ref{example-general-nD} is conjugate to $\mathfrak{B}_{n,n}'$. We can check that $\mathcal N_{n,n}'$ is not isomorphic to the nilradical  $\mathcal N_{n,n}$ in Example \ref{example2a}, so  $\mathcal G_{n,n}'$ and $\mathcal G_{n,n}$ are not isomorphic, $\mathfrak B_{n,n}'$ and $\mathfrak{B}_{n,n}$ are not conjugate.

\begin{remark}\label{rem:masa-no-frobenius}From  Theorem \ref{theorem:MASAs} and Lemma \ref{lemma:conjugaison-MASAs}, the classification of $2$-step solvable Frobenius Lie algebras is equivalent to that of $n$-dimensional $MASAs$ of $\mathfrak{gl}(n,\mathbb K)$ acting on $(\mathbb K^n)^*$ with an open orbit. However, not all $n$-dimentional MASAs have open orbit on $(\mathbb K^n)^*$. 
Indeed, for $n\ge 3$, the algebra   $\mathfrak{B}_{n}:= \mathbb R \mathbb I_{\mathbb R^n} \oplus L_{n}$, is a MASA of $\mathfrak{gl}(n,\mathbb R)$, where
 $L_{n}:=\Big\{\displaystyle\sum_{i=1}^{n-1}k_{i,n}E_{i,n}\;,\; k _{i,n}\in\mathbb R, i=1,\dots,n-1\Big\}$ is a MASA of $\mathfrak{sl}(n,\mathbb R).$  To see that,
consider $b=\displaystyle\sum_{p,q=1}^nt_{p,q}E_{p,q} \in\mathfrak{gl}(n,\mathbb R)$, with $t_{p,q}\in\mathbb R$, $p,q=1\dots,n.$  We have $[E_{i,n},b]=\displaystyle\sum_{l=1}^{n-1}t_{n,l}E_{i,l} +(t_{n,n}-t_{i,i})E_{i,n} 
-\displaystyle\sum_{1\leq k\leq n, \; k\neq i} t_{k,i}E_{k,n} $, for any $i=1,\dots,n-1.$ 
So the relation $[b,a] =0$, $\forall a\in\mathfrak B_n$, is equivalent to the following, valid for any $i$ with $1\leq i\leq n-1$: $t_{i,i}=t_{n,n}$ and for any $k$ with $1\leq k\leq n$ and $k\neq i$, one has 
  $t_{k,i}=0$. This simply means that for any $1\leq i\leq n-1$, the coefficients of the  i-th column of $b$ are all equal to zero except the (i,i) entry which is equal to $t_{n,n}.$
In other words $b=t_{n,n}\mathbb I_{\mathbb R^n} + t_{1,n}E_{1,n}+\dots + t_{n-1,n}E_{n-1,n}$, or equivalently, $b$ is an element of $\mathfrak{B}_n$. This simply means that $\mathfrak{B}_n$ is a MASA of $\mathfrak{gl}(n,\mathbb R)$.
The orbit  $\{\alpha\circ a, a\in L_{n}\}$  of any $\alpha\in (\mathbb R^n)^*$ is  at most 2-dimensional and spanned by $\alpha$ and $\tilde e_n^*$. More precisely, let $(\tilde e_1,\dots,\tilde e_n)$ be the canonical basis of $\mathbb R^n$ and let $\alpha = s_1\tilde e_1^*+\dots+s_n\tilde e_n^*\in (\mathbb R^n)^*$ , 
where $s_1,\dots,s_n\in\mathbb R,$ then for any $k_1,k_{1,n},\dots,k_{n-1,n}\in \mathbb R$ and $a=k_1\mathbb I_{\mathbb R^3}+k_{1,n}E_{1,n}+\dots+k_{n-1,n}E_{n-1,n}\in \mathfrak{B}_{n}$, one has $\tilde e_i^*\circ a =k_1\tilde e_i^*+k_{i,n}\tilde e_n^*$ ,\;  $i=1,\dots,n-1$ and  $\tilde e_n^*\circ a =k_1\tilde e_n^*$, 
so that
 \begin{eqnarray}\alpha\circ a
&=&
k_1\alpha+(k_{1,n}s_1
+k_{2,n}s_2
+\dots+
 k_{n-1,n} s_{n-1})e_n^*.\nonumber
\end{eqnarray}
For $n=3$, $L_3$ coincides with $L_{2,4}$ in the list of MASA of $\mathfrak{sl}(n,\mathbb R)$ supplied in \cite{winternitz}. 
\end{remark}

\subsection{Cartan subalgebras of $\mathfrak{sl}(n,\mathbb R)$}\label{sect:Cartan-subalgebras-sl(n,R)}
Recall that a Cartan subalgebra $\mathfrak{h}$ of a Lie algebra $\mathcal G$ is a nilpotent subalgebra which is equal to its own normalizer $\mathcal N_{\mathcal G} (\mathfrak{h}):=\{x\in\mathcal G, [x,y]\in\mathfrak{h}, \forall y\in\mathfrak{h}\}$ in $\mathcal G.$
 In other words,  a Cartan subalgebra of $\mathcal G$ is a subalgebra  $\mathfrak{h}$ which is nilpotent and satisfies the condition that if $x\in \mathcal G$ is such that $[x,\mathfrak{h}]\subset \mathfrak{h} $, then $x$ must belong to $\mathfrak{h}$. Cartan subalgebras of semi-simple Lie algebras must necessarily be Abelian, more precisely they are MASAs  which contain only semisimple elements.
 Cartan subalgebras of simple or semi-simple Lie algebras have been extensively studied by several authors amongst which E. Cartan, Harish-Chandra \cite{harish-chandra}, B. Kostant \cite{kostant}, M. Sugiura \cite{sugiura}, etc. It is natural that Theorem \ref{theorem:MASAs} brings them into play in the study of 2-step solvable Frobenius Lie algebras. Recall that a Cartan subalgebra $\mathfrak{h} $ of a semisimple Lie algebra $\mathcal G$, splits into a direct sum $\mathfrak{h} =\mathfrak{h^+} \oplus \mathfrak{h} ^-$ of two subalgebras $\mathfrak{h}^+ $ and $\mathfrak{h}^- $, respectively 
called its toroidal and its vector parts, such that $\mathfrak{h}^+ $ is only made of elements of $\mathfrak{h} $ whose adjoint operator (as a linear operator of $\mathcal G$) only has purely imaginary eigenvalues and adjoint operators of elements of $\mathfrak{h}^- $ have only real eigenvalues. See e.g. \cite{sugiura}. We propose the following characterization of Cartan subalgebras of $\mathfrak{sl}(n,\mathbb R).$
\begin{theorem}\label{thm:cartansubalgebras} Let  $\mathfrak{h}$ be an Abelian subalgebra of $\mathfrak{sl}(n,\mathbb R)$ and set $\mathfrak{B}_{\mathfrak{h}}:=\mathbb R\mathbb I_{\mathbb R^n}\oplus \mathfrak{h}$. The following are equivalent,

\noindent
(a) $\mathfrak{h}$ is a Cartan subalgebra   of $\mathfrak{sl}(n,\mathbb R)$,

\noindent
(b) $\mathfrak{B}_\mathfrak{h}$ is the algebra $\mathbb R[M]$ 
 of  polynomials of  
some nonderogatory $n\times n$ real matrix $M$ with $n$ distinct (real or complex) eigenvalues,

\noindent
(c) the Lie algebra $\mathfrak{B}_\mathfrak{h}\ltimes  \mathbb R^n$ is the direct sum $\mathfrak{aff}(\mathbb C)\oplus\dots\oplus
\mathfrak{aff}(\mathbb C)\oplus \mathfrak{aff}(\mathbb R)\oplus\dots\oplus \mathfrak{aff}(\mathbb R)$ of $p$ copies of the Lie algebra
$\mathfrak{aff}(\mathbb C)$  and $q$ copies of $\mathfrak{aff}(\mathbb R)$, where $p$ and $q$ are the respective  dimensions of the toroidal and the vector parts of $\mathfrak{h}.$  
\end{theorem}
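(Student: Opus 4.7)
The plan is to establish the equivalences via (a)$\Leftrightarrow$(b) and (b)$\Leftrightarrow$(c). Three ingredients underlie the argument: a Cartan subalgebra of the semisimple Lie algebra $\mathfrak{sl}(n,\mathbb R)$ is a MASA consisting of semisimple elements and has dimension equal to the rank $n-1$; Lemma \ref{nonderogatorymatrices}, which identifies $\mathbb R[M]$ with the centralizer $C(M)$ of a nonderogatory matrix $M$; and the Chinese Remainder Theorem applied to the real factorization of the characteristic polynomial of such an $M$.

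For (a)$\Rightarrow$(b), starting from $\dim\mathfrak{h}=n-1$ one obtains $\dim\mathfrak{B}_\mathfrak{h}=n$, with $\mathfrak{B}_\mathfrak{h}$ a MASA of $\mathfrak{gl}(n,\mathbb R)$, hence (by maximality) a commutative associative subalgebra. I would then select a regular element $M\in\mathfrak{h}$ whose $\mathfrak{sl}(n,\mathbb R)$-centralizer equals $\mathfrak{h}$. Since $\mathfrak{h}$ consists of semisimple elements, so is $M$, and its $n$ eigenvalues over $\mathbb C$ must be pairwise distinct; otherwise $C(M)$ would strictly contain $\mathbb R[M]$, contradicting the inclusions $\mathbb R[M]\subseteq\mathfrak{B}_\mathfrak{h}\subseteq C(M)$ combined with $\dim\mathfrak{B}_\mathfrak{h}=n=\dim\mathbb R[M]$. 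Thus $M$ is nonderogatory with $n$ distinct eigenvalues and Lemma \ref{nonderogatorymatrices} yields $\mathbb R[M]=\mathfrak{B}_\mathfrak{h}$. For (b)$\Rightarrow$(a), a nonderogatory $M$ with $n$ distinct eigenvalues has a minimal polynomial with simple roots and is therefore semisimple, and so is every polynomial in $M$. By Lemma \ref{nonderogatorymatrices}, $C(M)=\mathbb R[M]=\mathfrak{B}_\mathfrak{h}$, whence the $\mathfrak{sl}(n,\mathbb R)$-centralizer of $\mathfrak{h}$ is $\mathbb R[M]\cap\mathfrak{sl}(n,\mathbb R)=\mathfrak{h}$ (since the only elements of $\mathbb R\mathbb I_{\mathbb R^n}\oplus\mathfrak{h}$ of trace zero are those in $\mathfrak{h}$), yielding the Cartan property.

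For (b)$\Leftrightarrow$(c), given (b) one factors the characteristic polynomial of $M$ over $\mathbb R$ as
\begin{equation*}
\prod_{j=1}^{q}(X-\mu_j)\prod_{i=1}^{p}\bigl((X-\alpha_i)^2+\beta_i^2\bigr),\qquad 2p+q=n,
\end{equation*}
with coprime factors since the $n$ roots are pairwise distinct. The Chinese Remainder Theorem then yields the commutative-algebra isomorphism $\mathfrak{B}_\mathfrak{h}\cong\mathbb R[X]/\mathrm{char}(M)\cong\mathbb R^q\oplus\mathbb C^p$, and the corresponding primary decomposition of $\mathbb R^n$ as a $\mathbb R[M]$-module reads $\mathbb R^n\cong\mathbb R^q\oplus\mathbb C^p$, with each factor acting on the matching summand by scalar multiplication. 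Consequently, $\mathfrak{B}_\mathfrak{h}\ltimes\mathbb R^n$ splits as $q$ copies of $\mathbb R\ltimes\mathbb R=\mathfrak{aff}(\mathbb R)$ and $p$ copies of $\mathbb C\ltimes\mathbb C=\mathfrak{aff}(\mathbb C)$. The converse (c)$\Rightarrow$(b) is immediate: read off an explicit $M$ by taking a non-real generator in each $\mathbb C$-summand together with pairwise distinct scalars across the $\mathbb R$-summands, and verify it is nonderogatory with $n$ distinct eigenvalues.

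The main delicate step will be the identification of $p$ and $q$ with the dimensions of the toroidal and vector parts of $\mathfrak{h}$ in Sugiura's decomposition. This requires analyzing the adjoint spectrum on $\mathfrak{sl}(n,\mathbb R)$: a generator of a $\mathbb C$-summand acts by rotations with purely imaginary eigenvalues, hence contributes to $\mathfrak{h}^+$, while a generator of an $\mathbb R$-summand acts with real eigenvalues, hence contributes to $\mathfrak{h}^-$. Matching these counts, modulo the trace condition imposed on $\mathfrak{h}\subset\mathfrak{sl}(n,\mathbb R)$, yields the claimed identification and closes the theorem.
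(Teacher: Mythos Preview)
Your approach is sound and takes a genuinely different route from the paper. The paper proves (a)$\Rightarrow$(b) by quoting Sugiura's explicit normal form for Cartan subalgebras of $\mathfrak{sl}(n,\mathbb R)$ and reading off a nonderogatory matrix directly from that form; it then invokes its own classification Theorem~\ref{thm:classification} for (b)$\Leftrightarrow$(c) and Proposition~\ref{prop:abeliannilrad} (whose proof in turn rests on the derivation computation of Proposition~\ref{prop:derivations}) for (b)$\Rightarrow$(a). Your argument is more intrinsic and self-contained: it uses the abstract characterisation of Cartan subalgebras of a real semisimple Lie algebra as MASAs consisting of semisimple elements, together with Lemma~\ref{nonderogatorymatrices}, and handles (b)$\Leftrightarrow$(c) directly via the Chinese Remainder Theorem rather than through the full classification machinery. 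This buys you independence from the paper's heavier structure results, at the price of redoing by hand the identification of $p,q$ with the toroidal and vector dimensions.

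One small logical slip in your (a)$\Rightarrow$(b): you invoke ``$\dim\mathbb R[M]=n$'' inside the contradiction, but that equality is exactly what you are trying to establish (it is equivalent to $M$ being nonderogatory), so the step is circular as written. The fix is immediate: since $M$ is regular in $\mathfrak{h}$, its centraliser in $\mathfrak{gl}(n,\mathbb R)$ is $\mathfrak{B}_\mathfrak{h}$, hence $\dim C(M)=n$; the classical fact that $\dim C(M)\ge n$ with equality precisely when $M$ is nonderogatory then gives the conclusion, and semisimplicity of $M$ forces the $n$ eigenvalues to be distinct. Your caution about the final identification of $p,q$ is well placed: one obtains $p=\dim\mathfrak{h}^+$ cleanly, but matching $q$ with $\dim\mathfrak{h}^-$ requires care, since the two natural counts satisfy $2p+q=n$ while $\dim\mathfrak{h}^++\dim\mathfrak{h}^-=n-1$.
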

The proof of Theorem \ref{thm:cartansubalgebras} is given in Section \ref{proofofCartan-subalgebras-sl(n,R)}. A complementary characterization is given in Theorem \ref{prop:abeliannilrad}, where the part (c) is also restated and proved.
\subsection{Lagrangian subalgebras of the Heisenberg Lie algebra as MASAs of $\mathfrak{sl}(n,\mathbb R)$}
The MASAs $L_{n,n}$, $L_{n,n}'$ and $L_n$ of  $\mathfrak{sl}(n,\mathbb R)$ in
Example  \ref{example2a}, Example \ref{example23} and Remark \ref{rem:masa-no-frobenius}, are subspaces of the space of $n\times n$  strictly upper triangular matrices spanned by the $n\times n$ matrices $E_{1,j}$,  $E_{j,n}$ $j=2,\dots, n-1$ and $E_{1,n}$. That is, the space of $n\times n$  strictly upper triangular real matrices whose entries are  all equal to zero everywhere, except on the first row and on the last column,  the diagonal being also entirely made of zeros.  
 Note that, such a space together with the commutator $[M,N]:=MN-NM$ of matrices, is a subalgebra of $\mathfrak{sl}(n,\mathbb R)$, which is isomorphic to the Heisenberg Lie algebra $\mathcal H_{2n-3}$ of  dimension $2n-3.$ Indeed, this is the most frequently used representation of  $\mathcal H_{2n-3}$ by $n\times n$ matrices. Recall that, the Heisenberg Lie algebra is also the central extension  $\mathbb R^{2n-4}\times_{\omega_0}\mathbb R\tilde e_{2n-3}$ of the Abelian Lie algebra $\mathbb R^{2n-4}$ along its standard symplectic form $\omega_0(x,y) = \displaystyle\sum_{j=2}^{n-1} (x_jy_{n+j}-x_{n+j}y_{j})$, the Lie bracket being $[x+s\tilde e_{2n-3},y+t\tilde e_{2n+3}]= \omega_0(x,y)\tilde e_{2n-3}$ and the above representation by matrices is obtained by letting  the canonical basis $(\tilde e_2,\dots,\tilde e_{2n-4},\tilde e_{2n-3})$  of $\mathbb R^{2n-3}$ undergo the identification $\tilde e_j\mapsto E_{1,j}$, $\tilde e_{n+j}\mapsto E_{j,n}$, $j=2,\dots, n-1$ and $\tilde e_{2n-3}\mapsto E_{1,n}.$ The 2-form $\omega_0'$ on $\mathcal H_{2n-3}$ such that $\omega_0'(E_{1,j}, E_{1,k}) =\omega_0(\tilde e_j,\tilde e_k)$,
 $\omega_0'(E_{1,j}, E_{k,n}) =\omega_0(\tilde e_j,\tilde e_{n+k})$,  $\omega_0'(E_{j,n}, E_{k,n}) =\omega_0(\tilde e_{n+j},\tilde e_{n+k})$,  $\omega_0'(M, E_{1,n}) =0$ for any $M\in\mathcal H_{2n-3},$ is again denoted by  $\omega_0$.
The Lie bracket of two matrices 
 $M=\displaystyle\sum_{j=2}^{n-1} (m_jE_{1,j}+m_{n+j}E_{j,n})+m_{2n-3}E_{1,n}$ and $N=\displaystyle\sum_{j=2}^{n-1} (n_jE_{1,j}+n_{n+j}E_{j,n})+n_{2n-3}E_{1,n}$, is expressed as
 \begin{eqnarray}
[M,N]&=&\displaystyle\sum_{j=2}^{n-1}\sum_{l=2}^{n-1}(m_jn_{n+l} [E_{1,j},E_{l,n}]+m_{n+j}n_{l} [E_{j,n},E_{1,l}])\nonumber\\
&=&\displaystyle\sum_{j=2}^{n-1}(m_jn_{n+j}-m_{n+j}n_{j})  E_{1,n} =\omega_0(M,N) E_{1,n}.
\end{eqnarray}
So, a subspace of  $\mathcal H_{2n-3}$ is Abelian if and only if it is totally isotropic with respect to $\omega_0$. Thus,  a subspace of
 $\mathfrak{sl}(n,\mathbb R)$   which is included in $\mathcal H_{2n-3}$ as above, is a MASA of  $\mathfrak{sl}(n,\mathbb R)$, if and only if it is a Lagrangian subspace of $\mathcal H_{2n-3}$, that is, a subspace $\mathcal H$ of dimension $n-1$ such that $\omega_0(M,N)=0$, for any $M,N\in\mathcal H$. Thus the classification of the MASAs of   $\mathfrak{sl}(n,\mathbb R)$ which are also subspaces of $\mathcal H_{2n-3}$, is equivalent to the classification of the Lagrangian subspaces of $\mathcal H_{2n-3},$ up to conjugaison.

\section{Classification of 2-step solvable Frobenius Lie algebras given by nonderogatory matrices}\label{chap:classification-nonderogatory}
From Theorems \ref{thm:structure-2-step-Frobenius} and \ref{theorem:MASAs}, every 2-step solvable Frobenius  Lie algebra  is a semidirect sum  $\mathcal G=\mathfrak{B}\ltimes \mathbb R^n$ of two Abelian Lagrangian subalgebras $\mathfrak{B}$ and $\mathbb R^n,$ where 
 $ \mathbb R^n$   is the derived ideal  $\mathbb R^n=[\mathcal G,\mathcal G]$ and $\mathcal A$ is a  maximal Abelian subalgebra of the Lie algebra  $\mathfrak{gl}(n,\mathbb R)$ of $n\times n$ matrices, that acts on $\mathbb R^n$ such that, the corresponding contragredient action has an open orbit. 
Moreover,  $\mathcal A\ltimes \mathbb R^n$  and  $\mathfrak B\ltimes \mathbb R^n$  are isomorphic if and only if   $\mathcal A$ and  $\mathfrak B$ are conjugate. That is, $\mathfrak B=\phi \mathcal A \phi^{-1} $, for some invertible linear map $\phi: \mathbb R^n\to\mathbb R^n$. In this section, we discuss an important family of $2$-step solvable
 Frobenius Lie algebras, those given by
nonderogatory matrices.
As in Remark \ref{rmq:nonderogatory}, when $\mathfrak B$ is the algebra $\mathbb K[M]$ of all the polynomials in a nonderogatory matrix $M$, we will simply write  $\mathcal G_M =\mathbb K[M] \ltimes \mathbb K^n$ instead of $\mathfrak{B} \ltimes \mathbb K^n$. The same holds when we use the setting of nonderogatory linear maps. 

\subsection{Some key examples}\label{sect:examples-nonderogatory}
\subsubsection{The Lie algebra $\mathfrak{D}_0^1:=\mathfrak{aff}(\mathbb R)$}\label{ex:aff(R)}   Consider the simplest nonderogatory linear map $\psi : \mathbb R\to \mathbb R,$ $x\mapsto x.$ The associated 2-step solvable Lie algebra is the Lie algebra $\mathcal G_\psi=\mathfrak{aff}$($\mathbb R$) of the group of affine motions of the real line $\mathbb R.$  It has a basis  $(e_1,e_2)$  in which the Lie bracket reads $[e_1,e_2]=e_2$ and $\partial e_2^*=-e_1^*\wedge e_2^*$ is the unique (up to a constant scalar factor) exact symplectic form, where $(e_1^*,e_2^*)$ is the dual basis of $(e_1,e_2).$
 Of course, here $e_1=\psi^0$ and $e_2$ stands for a basis of the $1-$dimensional vector space $\mathbb R .$ 
We will see (e.g. in Theorem \ref{thm:classification}) that $\mathfrak{aff}(\mathbb R)$ is the  building block that makes up, in a trivial way (direct sum of $n$ copies), the Lie algebra  $\mathcal G_M$, whenever $M$ is a nonderogatory $n\times n$ real matrix with  $n$ distinct real  eigenvalues. We often use the notation $\mathfrak{D}_0^1 :=\mathfrak{aff}(\mathbb R)$.

\subsubsection{The Lie algebra $\mathfrak{D}_{0,1}^2:=\mathfrak{aff}(\mathbb C)$}\label{ex:aff(C)}    Let  $\psi:\mathbb R^2\to\mathbb R^2$ be the linear map with matrix 
$M_{-1}:=E_{2,1}-E_{1,2}$ 
in the canonical basis $(\tilde e_1, \tilde e_2)$,  with dual basis $(\tilde e_1^*,\tilde e_2^*)$. The characteristic and the minimal polynomials of $\psi$ are  both equal to $1+X^2$. So $M_{-1}$ has the  $2$ complex eigenvalues $ i$, $-i$.  Of course, $\mathbb R[M_{-1}] =\mathbb R\mathbb I_{\mathbb R^2}\oplus \mathbb R M_{-1}$ is a MASA of $\mathfrak{gl}(2,\mathbb R)$ and  $\tilde e_1^*\circ (M_0)^0 =\tilde e_1^*$,   $\tilde e_1^*\circ (M_{-1}) = -\tilde e_2^*$. 
 In the basis $(e_1,\cdots,e_4)$ of the  2-step solvable Lie algebra $\mathcal G_{M_{-1}}$, with $ e_1 =  (M_{-1})^0$, $e_2=  M_{-1},$ $e_3:= \tilde e_1$, $e_4:=\tilde e_2$, the Lie bracket  reads
 $[e_1, e_3] = e_3, [e_1, e_4] = e_4, [e_2, e_3] =e_4, [e_2, e_4] = - e_3.$
The exact form  $\partial e_3^*= - e_1^* \wedge e_3^*+ e_2^* \wedge e_4^*$ 
 is a symplectic form on $\mathcal G_{M_{-1}}.$ 
 Note that $\mathcal G_{M_{-1}}$ is the Lie algebra 
$\mathfrak{aff}(\mathbb C)$=$\Big\{\begin{pmatrix} z_1& z_2\\ 0 & 0\end{pmatrix}, z_j=x_j+iy_j, x_j,y_j\in\mathbb R,j=1,2.\Big\}$ of the group of affine motions of the complex line $\mathbb C,$ looked at as a real Lie algebra, with the identifications 
 $e_1=\begin{pmatrix}1& 0\\ 0 & 0\end{pmatrix}$,  $e_2=\begin{pmatrix}i& 0\\ 0 & 0\end{pmatrix},$ 
$e_3=\begin{pmatrix}0& 1\\ 0 & 0\end{pmatrix}$, $e_4=\begin{pmatrix}0& i\\ 0 & 0\end{pmatrix}.$ We set $\mathfrak{D}_{0,1}^2:=\mathfrak{aff}(\mathbb C).$

\subsubsection{The Lie algebras $\mathfrak D_0^n$}\label{ex:D0n}
In the canonical basis $\Big(\tilde e_{1}, \tilde  e_{2}, \cdots, \tilde e_{n}\Big)$   of $\mathbb R^n$, consider the  principal nilpotent matrix
 $M_0= \displaystyle\sum_{i=1}^{n-1}E_{i,i+1}.$
 It is a nonderogatory matrix, as its minimal polynomial  $m_0(X)=X^n$ coincides with its characteristic polynomial. Of course, $0$ is the unique eigenvalue of $M_0$  and it has multiplicity $n$. 
We use the following notation  $e_{1}:=\mathbb I_{\mathbb R^n} = M_0^0,\cdots, e_{j}:=M_0^{j-1},$ $j=1,\dots,n.$
Note that the vector 
 $\bar x:=\tilde e_n$ is such that 
\begin{eqnarray}\label{basisD0n} 
&& e_{n+1}:=\bar x,  \;\;\; \;\; e_{n+2}:=M_0\bar x=\tilde e_{n-1}, \;  \dots, \; 
\nonumber\\
&& e_{n+j}:=M_0^{j-1}\bar x=\tilde e_{n-j+1}, \;\dots, \; \;  e_{2n}:=M_0^{n-1}\bar x=\tilde e_1,\nonumber
\end{eqnarray}
 form a basis of $\mathbb R^n$. In the basis   $\Big(e_{1}, e_{2}, \cdots, e_{2n}\Big)$ of  the Lie algebra $\mathcal G_{M_{0}},$  up to skew-symmetry, the nonzero Lie brackets are
\begin{eqnarray}\label{LieBracketsD0n}
 [e_{i},e_{n+j}]=e_{n+j+i-1}, ~i,j=1,\cdots,n, \end{eqnarray}
where we use the convention $e_{2n+k}=0,$ whenever $k\ge 1.$ In particular, 

$[e_{1},e_{n+j}]=e_{n+j},$ $j=1,\cdots,n,$  and $[e_{j},e_{2n}]=0,$ $j=2,\cdots,n.$
 
The following exact form  on $\mathcal G_{M_{0}}$, is non-degenerate,  
\begin{eqnarray}\partial e_{2n}^*=-\displaystyle\sum_{j=1}^ne_j^*\wedge e_{2n-j+1}^*=- \displaystyle\sum_{j=1}^n e_{n-j+1}^*\wedge e_{n+j}^*\;.\end{eqnarray} Throughout the present paper, we write  ${\mathfrak D}_{0}^{n}$ instead of   $\mathcal G_{M_{0}}.$  The Lie algebra  ${\mathfrak D}_{0}^{n}$ has an $n$-step nilpotent co-dimension $1$ nilradical  $\mathcal N=$span$(e_2, \dots,  e_{2n}).$ Indeed, if we write $C^0(\mathcal N):=\mathcal N$, $C^{p+1}(\mathcal N):=[\mathcal N, C^{p}(\mathcal N)]$, $p\ge 0$, we have $C^{n-1}(\mathcal N) = \mathbb R e_{2n}$ and  $C^{n}(\mathcal N) = 0.$

\subsubsection{The Lie algebra $\mathfrak{D}_{0,1}^n$} \label{ex:D01n}
Consider the real nonderogatory $4\times 4$ matrix $M_{0,1}=E_{2,1}-E_{1,2}+E_{4,3}-E_{3,4}+E_{1,3}+E_{2,4}$. Its  characteristic and minimal polynomials are both equal to $\chi(X)=(X^2+1)^2$. So $i$ and  $-i$  are the only (repeated complex conjugate) eigenvalues of $M_{0,1}$.  So $\mathbb R[M_{0,1}]=$span($M_{0,1}^0,M_{0,1},M_{0,1}^2,M_{0,1}^3$) is a MASA of $\mathfrak{gl}(4,\mathbb R)$.
In the basis  $(e_1,e_2,\dots,e_8)$ of $\mathcal{G}_{M_{0,1}}$, with $e_1:=\mathbb I_{\mathbb R^4}$,  $e_2:=M_{0,1}$, 
$e_3:=M_{0,1}^2$, $e_4:=M_{0,1}^3$, 
 $e_{4+j}=\tilde e_j$ , $j=1,\dots,4$, 
the Lie bracket reads
\begin{eqnarray}\label{eq:forgottenLiealgebra} && [e_1,e_l]=e_l\;, \;\; l=5,6,7,8,\\
&&[e_2,e_5]=e_6\;,\;\;\;\;[e_2,e_6]=-e_5\;,\; [e_2,e_7]=e_5+e_8\; ,\;\;\;\;\; [e_2,e_8]=e_6-e_7\;,
\nonumber\\
&&[e_3,e_5]=-e_5\;, \; [e_3,e_6]=-e_6\;,\; [e_3,e_7]=2e_6-e_7\;,\;\; \;\; [e_3,e_8]=-2e_5 -e_8\;,
\nonumber\\
&&[e_4,e_5]=-e_6\;,\; [e_4,e_6]=e_5\;,\;\; \;\;[e_4,e_7]=-3e_5-e_8\;,\; [e_4,e_8]=-3e_6+e_7\;.
\nonumber
\end{eqnarray}
Note that both $(e_7,M_{0,1}e_7 = e_5+e_8,  
M_{0,1}^2e_7 = 2e_6-e_7, M_{0,1}^3e_7 = -3e_5-e_8)$ and 
$(e_8,M_{0,1}e_8 = e_6-e_7, \; 
 M_{0,1}^2e_8 =- 2e_5-e_8, M_{0,1}^3e_8 = -3e_5+e_7)$ are bases of $\mathbb R^4.$ 
The 2-form $\partial e_5^* =- e_1^*\wedge e_5^* + e_2^*\wedge( e_6^* - e_7^*) +  e_3^*\wedge (e_5^* + 2  e_8^*)   -  e_4^*\wedge (e_6^* -3 e_7^*) $ is nondegenerate, so $e_5^*$ is a Frobenius functional. We set $\mathfrak{D}_{0,1}^4:=\mathcal G_{M_{0,1}}.$ 
 This generalizes (see Section \ref{sect:nondiagonalizableinC}) to a $2n$-dimensional 2-step solvable Frobenius Lie algebra denoted by $\mathfrak{D}_{0,1}^n:=\mathcal G_{M_{0,1}},$  for any even $n\ge 4$, as follows. Let $M_{0,1}$ be the nonderogatory $n\times n$ matrix  \begin{eqnarray} M_{0,1}= M_s + M_n\, \text{ with }
M_s=-\displaystyle\sum_{j=0}^{\frac{n}{2}-1}(E_{2j+1,2j+2}-E_{2j+2,2j+1}), \;\; M_n=\displaystyle\sum_{j=1}^{n-2}E_{j,j+2},\nonumber
\end{eqnarray} 
in the canonical basis $(\tilde e_1,\dots,\tilde e_n)$ of $\mathbb R^n.$
Its  characteristic and minimal polynomials are both equal to $\chi(X)=(X^2+1)^{\frac{n}{2}}$. So $i$ and  $-i$  are the only (${\frac{n}{2}}$ times repeated complex conjugate) eigenvalues of $M_{0,1}$. We say that each of $i$ and $-i$ is of multiplicity ${\frac{n}{2}}$. So $\mathbb R[M_{0,1}]=$span($M_{0,1}^0,M_{0,1},\dots,M_{0,1}^{n-1}$) is a MASA of $\mathfrak{gl}(n,\mathbb R)$.
In the basis  $(e_1,e_2,\dots,e_{2n})$ of $\mathfrak{D}_{0,1}^n$, with  $e_j:=(M_{0,1})^{j-1}$, $e_{n+j}=\tilde e_{j}$, $j=1,\dots, n$, 
the Lie bracket reads
\begin{eqnarray}\label{eq:forgottenLiealgebra} 
&&[e_j,e_{n+l}]=(M_{0,1})^{j-1}\tilde e_l,  \;\; j, l=1,\dots,n.
\end{eqnarray}

\subsection{The classification Theorem}
 Here we concentrate on the case $\mathbb K=\mathbb R.$ 
We summarize our main results of this section in Theorem \ref{thm:classification}
in which we give a complete classification of all $2$-step solvable Frobenius Lie algebras  of the form 
$\mathcal G_M:=\mathbb R[M]\ltimes\mathbb R^n$,  given by  nonderogatory real matrices $M$. In particular, we show that  the Lie algebras $\mathfrak{D}_0^p$,  $\mathfrak{D}_{0,1}^{2p}$, where $p\ge 1$ is an integer, discussed in Section \ref{sect:examples-nonderogatory}, are the  building blocks that make up, in a trivial way (direct sums), the Lie algebras  $\mathcal G_M$, whenever $M$ is a nonderogatory $n\times n$ real matrix. 
  As in Examples \ref{ex:aff(R)},  \ref{ex:aff(C)}, the notations $\mathfrak{D}_0^1:=\mathfrak{aff}(\mathbb R)$ and  $\mathfrak{D}_{0,1}^2:=\mathfrak{aff}(\mathbb C)$, will be implicitly adopted throughout this work.
It is obvious that, if $z,\bar z$ are two complex conjugate eigenvalues of a square matrix $M$, then the polynomial $(X-Re(z))^2+Im(z)^2$ divides the characteristic polynomial $\chi(X)$ of $M$ (Lemma \ref{lemma:factorization}). 
\begin{definition}Let $M$ be an  $n\times n$ real matrix and $\chi(X)$ its characteristic polynomial. We say that the complex eigenvalues $z,\bar z$ are of multiplicity  $m$, if $m$ is the greatest integer such that  $f_z:=\Big( (X-Re(z))^2+Im(z)^2\Big)^m$ is a factor of  $\chi(X)$. In other words, if  $P_X=\Big( (X-Re(z))^2+Im(z)^2\Big)^q$  divides $\chi(X)$ for some integer $q$, then $P_X$  divides $f_z$.
\end{definition}
We call the following, the Factorization Lemma.
\begin{lemma}[Factorization Lemma]\label{lemma:factorization}
Let $M$ be a nonderogatory $n\times n$ real matrix with only two eigenvalues which are both complex and hence conjugate
 $\lambda,$ $\bar\lambda$. Then, the characteristic polynomial of $M$ is $\chi(X)=\Big((Re(\lambda)-X)^2+Im(\lambda)^2\Big)^{\frac{n}{2}}
.$ 
More generally, if  a real $n\times n$ matrix has $p$ distinct real eigenvalues $\lambda_1,\dots,\lambda_p,$  with respective multiplicities $k_1,\dots,k_p$ and $2q$ complex eigenvalues $\lambda_{p+1},\bar\lambda_{p+1},\dots,\lambda_{p+q},\bar\lambda_{p+q}$, with respective multiplicities $k_{p+1},\dots,k_{p+q}$, with $n=k_1+\dots+ k_p+2(k_{p+1}+\dots+k_{p+q})$. 

Then, the characteristic polynomial 
of $M$ coincides with the following  product $\chi(X)=\displaystyle \Pi_{j=1}^{p}(X-\lambda_j)^{k_j}  \Pi_{l=1}^{q} \Big((X-Re(\lambda_{p+l}))^2+Im(\lambda_{p+l})^2\Big)^{k_{p+l}}.$ \end{lemma}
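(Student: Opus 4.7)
The lemma is, at heart, just the factorization of a real polynomial into its real irreducible factors, applied to the characteristic polynomial of a real matrix. First, I would observe that since $M$ has real entries, $\chi(X)=\det(X\mathbb I_{\mathbb R^n}-M)$ lies in $\mathbb R[X]$ and, over the algebraic closure $\mathbb C$, splits completely as $\chi(X)=\prod_{i}(X-\mu_i)^{m_i}$, where the $\mu_i$ are the distinct complex eigenvalues of $M$ and the $m_i$ their algebraic multiplicities (with $\sum_i m_i=n$).

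Second, I would invoke the standard fact that complex conjugation is a field automorphism of $\mathbb C$ fixing $\mathbb R$, hence a ring automorphism of $\mathbb C[X]$ that fixes $\chi$; it therefore permutes the roots of $\chi$ while preserving multiplicities. Consequently, if $z\in\mathbb C\setminus\mathbb R$ occurs as a root with multiplicity $m$, then $\bar z$ occurs with the same multiplicity $m$. The non-real eigenvalues thus arrange into conjugate pairs $\lambda_{p+l},\bar\lambda_{p+l}$ with a common multiplicity $k_{p+l}$, and each such pair contributes
\begin{equation*}
(X-\lambda_{p+l})^{k_{p+l}}(X-\bar\lambda_{p+l})^{k_{p+l}}=\Bigl((X-\mathrm{Re}(\lambda_{p+l}))^{2}+\mathrm{Im}(\lambda_{p+l})^{2}\Bigr)^{k_{p+l}}
\end{equation*}
to $\chi(X)$. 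The real eigenvalues $\lambda_{j}$ contribute the usual factors $(X-\lambda_{j})^{k_{j}}$, and assembling everything yields the formula claimed in the lemma. A brief degree count confirms $\sum_{j}k_{j}+2\sum_{l}k_{p+l}=n$, matching the hypothesis.

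Third, I would check that the integer $k_{p+l}$ so obtained coincides with the multiplicity of the pair $\lambda_{p+l},\bar\lambda_{p+l}$ as defined just before the lemma: if $\bigl((X-\mathrm{Re}(\lambda_{p+l}))^{2}+\mathrm{Im}(\lambda_{p+l})^{2}\bigr)^{k_{p+l}+1}$ divided $\chi$, then over $\mathbb C$ the factor $(X-\lambda_{p+l})^{k_{p+l}+1}$ would divide $\chi$, contradicting the maximality of $k_{p+l}$ in the complex factorization. The first assertion of the lemma is the special case $p=0$, $q=1$: the whole degree $n$ is concentrated in a single complex conjugate pair, so its common multiplicity must equal $n/2$ (which forces $n$ to be even). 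No step here poses a real obstacle; the only point worth emphasizing is that the nonderogatory hypothesis plays no role in this purely polynomial statement, which relies solely on $M$ being real.
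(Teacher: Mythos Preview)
Your argument is correct and, in fact, a little cleaner than the paper's own. The paper treats the special case $p=0$, $q=1$ first by an explicit iteration: it peels off one quadratic factor $(X-\mathrm{Re}(\lambda))^2+\mathrm{Im}(\lambda)^2$ from $\chi(X)$, observes that the remaining real polynomial $P_1(X)$ of degree $n-2$ must again have $\lambda,\bar\lambda$ as its only complex roots, and repeats until nothing is left. For the general statement it then appeals to the Primary Decomposition Theorem to reduce to the single-eigenvalue and single-conjugate-pair cases. You instead go straight to the complete factorization of $\chi(X)$ over $\mathbb C$ and use the action of complex conjugation on the multiset of roots to pair up the non-real ones with equal multiplicity; this handles both the special and the general case in one stroke and bypasses both the inductive peeling and the Primary Decomposition Theorem. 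Your closing remark that the nonderogatory hypothesis is irrelevant to this lemma is also accurate and worth keeping.
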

\begin{proof}
Let $M$ be a nonderogatory $n\times n$ real matrix with only two eigenvalues which are both complex and hence conjugate
 $\lambda,$ $\bar\lambda$.  Then obviously  the polynomial $(X-\lambda)(X-\bar\lambda)=(X-Re(\lambda))^2+Im(\lambda)^2$ divides the characteristic polynomial $\chi(X)$ of $M.$ We factorize $\chi(X)$  as $\chi(X) =\Big((X-Re(\lambda))^2+Im(\lambda)^2\Big)P_1(X)$ 
where $P_1(X)$ is  a polynomial of degree $n-2,$ with real coefficients. As $\mathbb C$ is a closed field, $P_1(X)$ admits some complex zeros, bound to be 
 $\lambda,$ $\bar\lambda$ as they are the only zeros of $\chi(X)$, by hypothesis. We re-write  $\chi(X) $ as $\chi(X) =\Big((X-Re(\lambda))^2+Im(\lambda)^2\Big)^2P_2(X)$ 
where $P_2(X)$ is  a polynomial of degree $n-4$, with real coefficients. The result follows by inductively reapplying the same process to $P_2$. The proof of the general case where $M$ is not necessarily nonderogatory, immediately follows by applying the Primary Decomposition Theorem to $\chi(X)$ to reduce the problem to the cases where $M$ admits a unique eigenvalue or only two eigenvalues which are both complex and conjugate, as above.
\end{proof}
 \begin{theorem}\label{thm:classification}
Let $M$ be a  nonderogatory  $n\times n$ real matrix.
Suppose $M$ has $p$ real distinct eigenvalues $\lambda_1,\cdots,\lambda_p$, with respective multiplicities $k_1, ...,k_p$ and $2q$ distinct complex  eigenvalues $z_1,\bar z_1,\cdots, z_q, \bar z_q$ with respective multiplicities $m_1,\dots,m_q$, where $n=k_1+\dots+k_p+2(m_1+\dots+m_q)$.
Then the Lie algebra 
 $\mathcal G_M:=\mathbb R[M]\ltimes \mathbb R^n$  is isomorphic to the direct sum 
 ${\mathfrak D}_{0}^{k_1}\oplus  \cdots\oplus {\mathfrak D}_{0}^{k_p} \oplus {\mathfrak D}_{0,1}^{m_1}\oplus  \cdots\oplus {\mathfrak D}_{0,1}^{m_q}$
of the Lie algebras  ${\mathfrak D}_{0}^{k_1}, $ $\cdots,$ ${\mathfrak D}_{0}^{k_p}$, $ {\mathfrak D}_{0,1}^{m_1},$ $ \cdots,$ ${\mathfrak D}_{0,1}^{m_q}.$

 In particular, let $M$ be a real $n\times n$ nonderogatory matrix,
\begin{itemize}
\item (a) if M admits $n$ distinct real  eigenvalues, then the Lie algebra 
 $\mathcal G_M$  is isomorphic to the direct sum $\mathfrak{aff}(\mathbb R) \oplus \mathfrak{aff}(\mathbb R) \oplus \ldots \oplus~ \mathfrak{aff}(\mathbb R) $  of $n$
 copies of the Lie algebra $\mathfrak{aff}(\mathbb R) $ of the group of  affine motions of the real line $\mathbb R$,

\item (b) if $M$ admits a unique real eigenvalue $\lambda$, then  $\mathcal G_M$ is isomorphic to ${\mathfrak D}_{0}^{n}$,

\item (c) if M admits $n$ distinct complex  eigenvalues, then 
 $\mathcal G_M$  is isomorphic to the direct sum $\mathfrak{aff}(\mathbb C) \oplus \mathfrak{aff}(\mathbb C) \oplus \ldots \oplus~ \mathfrak{aff}(\mathbb C) $  of $n$
 copies of the Lie algebra $\mathfrak{aff}(\mathbb C) $ of the group of  affine motions of the complex line $\mathbb C$, seen as a real Lie algebra,

\item (d) if $M$ admits only  two eigenvalues  which are both complex (nonreal)  and hence conjugate, then  $\mathcal G_M$ is isomorphic to ${\mathfrak D}_{0,1}^{n}$.
\end{itemize}
\end{theorem}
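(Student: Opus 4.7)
The plan is to invoke the Primary Decomposition applied to the minimal polynomial of $M$ (which coincides with its characteristic polynomial since $M$ is nonderogatory) in order to split $\mathbb R^n$ into $M$-invariant subspaces, on each of which the restriction of $M$ has a single real eigenvalue or a single pair of complex conjugate eigenvalues; then to reduce the whole statement to cases (b) and (d) via a direct sum decomposition of $\mathcal G_M$; and finally to establish cases (b) and (d) by conjugacy with the explicit models $M_0$ and $M_{0,1}$, using Lemma~\ref{lemma:conjugaison-MASAs}.

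More precisely, the Factorization Lemma yields the coprime factorization $\chi(X) = \prod_{j=1}^{p}(X-\lambda_j)^{k_j}\prod_{l=1}^{q}\bigl((X-\mathrm{Re}(z_l))^2+\mathrm{Im}(z_l)^2\bigr)^{m_l}$, so that the Primary Decomposition Theorem produces an $M$-invariant direct sum $\mathbb R^n = \bigoplus_{j=1}^p V_j \oplus \bigoplus_{l=1}^q W_l$ with $\dim V_j = k_j$ and $\dim W_l = 2m_l$. B\'ezout's identity, applied to the pairwise coprime primary factors, furnishes pairwise orthogonal idempotents $\pi_j, \varpi_l \in \mathbb R[M]$ summing to $\mathbb I_{\mathbb R^n}$ and projecting onto the corresponding blocks; these give an associative algebra splitting $\mathbb R[M] = \bigoplus_j \mathbb R[M]\pi_j \oplus \bigoplus_l \mathbb R[M]\varpi_l$ in which each summand acts as $\mathbb R[M|_{V_j}]$ (resp.\ $\mathbb R[M|_{W_l}]$) on its own block and annihilates the others. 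Since the Lie bracket of $\mathcal G_M$ is entirely determined by this action, the natural restriction map $\mathcal G_M \to \bigoplus_j \bigl(\mathbb R[M|_{V_j}]\ltimes V_j\bigr) \oplus \bigoplus_l \bigl(\mathbb R[M|_{W_l}]\ltimes W_l\bigr)$ is a Lie algebra isomorphism. This reduces the general statement to cases (b) and (d); cases (a) and (c) then appear as the subcases in which every multiplicity equals $1$, in view of the conventions $\mathfrak D_0^1=\mathfrak{aff}(\mathbb R)$ and $\mathfrak D_{0,1}^2=\mathfrak{aff}(\mathbb C)$.

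For case (b), I would set $N:=M-\lambda\mathbb I_{\mathbb R^n}$; then $N$ is nilpotent and nonderogatory, with $\mathbb R[M]=\mathbb R[N]$. The real Jordan form of a nonderogatory nilpotent matrix consists of a single Jordan block of size $n$, which is exactly the matrix $M_0$ of Section~\ref{ex:D0n}. Hence $N = P M_0 P^{-1}$ for some invertible $P$, and Lemma~\ref{lemma:conjugaison-MASAs} yields $\mathcal G_M \cong \mathcal G_{M_0} = \mathfrak D_0^n$. For case (d), put $N := (M - \mathrm{Re}(z)\mathbb I_{\mathbb R^n})/\mathrm{Im}(z)$, so that $N$ is nonderogatory with minimal polynomial $(X^2+1)^{n/2}$ and $\mathbb R[M]=\mathbb R[N]$. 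The key step — and the one I expect to be the most delicate — is to identify the unique real similarity class of such an $N$ with the explicit matrix $M_{0,1}$ of Section~\ref{ex:D01n}. This amounts to checking directly that $M_{0,1}$ has minimal polynomial $(X^2+1)^{n/2}$ and invoking the real Jordan canonical form theorem, which then asserts that every nonderogatory real matrix with eigenvalues $\pm i$ of common multiplicity $n/2$ is $\mathbb R$-conjugate to $M_{0,1}$. A second application of Lemma~\ref{lemma:conjugaison-MASAs} then yields $\mathcal G_M \cong \mathfrak D_{0,1}^n$, completing the proof.
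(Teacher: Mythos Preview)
Your proposal is correct and follows essentially the same route as the paper: Primary Decomposition to split $\mathbb R^n$ (and hence $\mathcal G_M$) into blocks with a single real eigenvalue or a single conjugate pair, followed by an affine shift $M\mapsto (M-rI)/s$ plus real Jordan form to identify each block with the model $\mathfrak D_0^{k}$ or $\mathfrak D_{0,1}^{2m}$ via Lemma~\ref{lemma:conjugaison-MASAs}. Your B\'ezout-idempotent argument for the splitting is exactly the content of the paper's Lemma~\ref{lem:splitting} (the paper instead observes that the extension-by-zero of each $M|_{V_j}$ commutes with $M$ and hence lies in $\mathbb R[M]$), and your shift-then-conjugate step is the same as the paper's Lemmas~\ref{lem:uniquerealeigenvalue} and Proposition~\ref{prop:isomorphismcomplexeigenvaluesnonsemiimple}, which conjugate first and then note that $M_\lambda$ is a polynomial in $M_0$ (resp.\ $M_{0,1}$).
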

The following is a direct corollary of Theorem \ref{thm:classification}.
\begin{corollary}\label{corollary1} Let $M$ be a nonderogatory $n\times n$ real matrix. 
The  Lie algebra $\mathcal G_M$ is indecomposable if and only if one of the following holds true: 
(a) $M$ has a unique eigenvalue (which is necessarily real), in which case $\mathcal G_M$ is isomorphic to $\mathfrak{D}_0^n$ and is hence completely solvable, or (b)  $M$ has only 2 eigenvalues which are both complex and conjugate, in which case $\mathcal G_M$ is isomorphic to $\mathfrak{D}_{0,1}^n$.  
\end{corollary}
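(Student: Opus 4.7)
The plan is a direct application of Theorem \ref{thm:classification}, combined with an indecomposability argument for the building blocks $\mathfrak D_0^n$ and $\mathfrak D_{0,1}^n$, together with a short verification of complete solvability in case (a).

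By Theorem \ref{thm:classification}, $\mathcal G_M$ decomposes as the direct sum $\mathfrak D_0^{k_1}\oplus\cdots\oplus\mathfrak D_0^{k_p}\oplus\mathfrak D_{0,1}^{m_1}\oplus\cdots\oplus\mathfrak D_{0,1}^{m_q}$, where the exponents correspond to the multiplicities of the distinct real eigenvalues and of the distinct pairs of complex conjugate eigenvalues of $M$. If $p+q\ge 2$ the right-hand side is manifestly decomposable as a direct sum of two nonzero ideals, so indecomposability forces $p+q=1$, that is, exactly the alternatives (a) or (b).

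The main work is the converse: showing that each single block $\mathfrak D_0^n$ (case (a)) and $\mathfrak D_{0,1}^n$ (case (b)) is indecomposable. For this, suppose $\mathcal G_M=\mathcal I_1\oplus\mathcal I_2$ is a direct sum of two nonzero ideals, where $M$ is nonderogatory with a single eigenvalue over $\mathbb R$ or a single pair of complex conjugate eigenvalues. Since the derived ideal $[\mathcal G_M,\mathcal G_M]=\mathbb R^n$ is an ideal, it splits as $\mathbb R^n=(\mathbb R^n\cap\mathcal I_1)\oplus(\mathbb R^n\cap\mathcal I_2)$. Each summand is stable under the adjoint action of $\mathbb R[M]$, hence under $M$ itself. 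But because $M$ is nonderogatory with a single elementary divisor over $\mathbb R$ (either $(X-\lambda)^n$ in case (a) or $((X-\text{Re}(z))^2+\text{Im}(z)^2)^{n/2}$ in case (b) by Lemma \ref{lemma:factorization}), its lattice of invariant subspaces of $\mathbb R^n$ is totally ordered: it is the flag $\ker\bigl((M-\lambda I)^k\bigr)$ in case (a) or $\ker\bigl((M^2-2\text{Re}(z)M+|z|^2 I)^k\bigr)$ in case (b). Two invariant subspaces summing directly to $\mathbb R^n$ must therefore be $\{0\}$ and $\mathbb R^n$; so, WLOG, $\mathbb R^n\subset\mathcal I_1$. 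Then for any $a\in\mathcal I_2$ and $x\in\mathbb R^n$, $[a,x]\in\mathcal I_1\cap\mathcal I_2=\{0\}$, i.e.\ the projection of $a$ onto $\mathbb R[M]$ annihilates $\mathbb R^n$, hence vanishes; so $\mathcal I_2\subset\mathbb R^n\subset\mathcal I_1$, forcing $\mathcal I_2=\{0\}$, a contradiction.

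It remains to check in case (a) that $\mathfrak D_0^n$ is completely solvable, i.e.\ that every $\mathrm{ad}(u)$ has real spectrum. Writing $u=a+x$ with $a\in\mathbb R[M]$ and $x\in\mathbb R^n$, in the decomposition $\mathfrak D_0^n=\mathbb R[M]\oplus\mathbb R^n$ the operator $\mathrm{ad}(u)$ is strictly lower triangular on $\mathbb R[M]$ (since $[\mathbb R[M],\mathbb R[M]]=0$ and $[x,\mathbb R[M]]\subset\mathbb R^n$) and acts as left multiplication by $a$ on $\mathbb R^n$; its spectrum is therefore $\{0\}\cup\mathrm{Spec}(a|_{\mathbb R^n})$, and since $a\in\mathbb R[M]$ with $M$ having the unique real eigenvalue $\lambda$, all eigenvalues of $a$ are the single real value $p(\lambda)$ where $a=p(M)$. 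The anticipated obstacle is essentially the invariant-subspace dichotomy above; once it is correctly formulated using the fact that being nonderogatory is equivalent to having a single rational canonical block over $\mathbb R$, the rest is bookkeeping.
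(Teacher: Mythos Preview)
Your argument is correct and more detailed than what the paper offers. The paper simply declares Corollary~\ref{corollary1} a ``direct corollary'' of Theorem~\ref{thm:classification} and does not spell out the indecomposability of the individual blocks $\mathfrak D_0^n$ and $\mathfrak D_{0,1}^n$; later (in the paragraph ``More on the Lie algebra $\mathfrak D_{0,1}^n$'') it asserts indecomposability of $\mathfrak D_{0,1}^n$ after computing the nilradical, but without a proof tailored to that purpose.

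Your route is genuinely different and cleaner: you exploit the fact that for a nonderogatory matrix with a single irreducible factor in its minimal polynomial over $\mathbb R$, the lattice of $M$-invariant subspaces of $\mathbb R^n$ is a chain (equivalently, $\mathbb R^n$ is a uniserial $\mathbb R[M]$-module). This immediately forces any ideal decomposition to be trivial on the derived ideal, and your follow-up step ($\mathcal I_2\subset\mathbb R^n\subset\mathcal I_1$) finishes the job. This avoids the nilradical computations scattered through the paper and works uniformly for cases (a) and (b). The complete-solvability check in case (a) is also correctly handled via the block-triangular form of $\mathrm{ad}(u)$. One cosmetic remark: when you write ``a single elementary divisor over $\mathbb R$,'' you mean a single \emph{invariant factor} (equivalently, a single rational canonical block); the phrasing via $\mathbb R[X]/(m(X))$ being uniserial when $m$ is a prime power would make the invariant-subspace claim airtight.
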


The rest of this section and Sections \ref{Sect:uniqueeigenvalue}, \ref{Sect:complexeigenvalues} are mainly concerned with discussions and the proof of Theorem \ref{thm:classification}.
Lemma \ref{lem:splitting} plays a fundamental role in the classification of Frobenius Lie algebras of the form $\mathcal G_M$, where $M$ is a nonderogatory matrix. It allows us to  split Theorem \ref{thm:classification} into two main cases discussed in Propositions \ref{prop:classification-multiple-eigenvalues}, \ref{prop:classification-multiple-complexeigenvalues}, \ref{prop:isomorphismcomplexeigenvaluesnonsemiimple}, \ref{prop:all-complex-eigenvalues-nondiagonalizable}.
The particular case of  Theorem \ref{thm:classification} (a) is obtained by taking, in the general case, $p=n$, $q=0$ and by identifying $\mathfrak{D}_0^1$ with $\mathfrak{aff}(\mathbb R)$. 
The proof of the particular case of Theorem \ref{thm:classification}  (b) can be directly found in Lemma \ref{lem:uniquerealeigenvalue}, whereas  Theorem \ref{thm:classification}  (c)  and (d) are directly proved in
Propositions \ref{prop:classification-multiple-complexeigenvalues} and \ref{prop:isomorphismcomplexeigenvaluesnonsemiimple}, respectively.

\begin{lemma}\label{lem:splitting}
Let $M$ be a nonderogatory $n\times n$ real matrix. Suppose $\mathbb R^n$ splits as a direct sum
 $\mathbb R^n= \mathcal{E}_1\oplus\mathcal{E}_2$ of  two  subspaces $\mathcal{E}_1$ and $\mathcal{E}_2$ which are both invariant under $M$, that is, the image
$M v_j$ of any vector $v_j\in \mathcal{E}_j$, still lies in $\mathcal{E}_j,$ for any $j=1,2.$
Denote by $M_j$ the restriction of $M$ to $\mathcal{E}_j$ and let $\mathcal G_{M_j}$ 
stand for the Lie algebra
 $\mathcal G_{M_j}:=\mathbb R[M_j]\ltimes \mathcal{E}_j$.
Then each $\mathcal G_{M_j}$, $j=1,2,$ is an ideal of $\mathcal G_{M}$.
 More precisely, 
$\mathcal G_{M}$ splits as the direct sum 
 $\mathcal G_{M}=\mathcal G_{M_1}\oplus \mathcal G_{M_2}$.
\end{lemma}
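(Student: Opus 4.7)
The strategy is to exploit the characterization of nonderogatory matrices via the centralizer, namely $C(M)=\mathbb{R}[M]$ from Lemma \ref{nonderogatorymatrices}. This immediately forces the two spectral projections attached to the decomposition $\mathbb{R}^n=\mathcal{E}_1\oplus\mathcal{E}_2$ to be polynomials in $M$, which is the only nontrivial ingredient of the proof; everything else is then a routine unwinding of the brackets in $\mathcal{G}_M$.

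Concretely, I would begin by introducing the linear map $\pi_j:\mathbb{R}^n\to\mathbb{R}^n$ given by projection onto $\mathcal{E}_j$ along $\mathcal{E}_{3-j}$, for $j=1,2$. Writing $v=v_1+v_2$ with $v_i\in\mathcal{E}_i$ and using the $M$-invariance of both $\mathcal{E}_1$ and $\mathcal{E}_2$, a one-line computation shows $\pi_j M v = M v_j = M\pi_j v$, so that $\pi_j\in C(M)$. Since $M$ is nonderogatory, Lemma \ref{nonderogatorymatrices} yields $C(M)=\mathbb{R}[M]$, hence $\pi_j\in\mathbb{R}[M]$. Setting $R_j:=\pi_j\,\mathbb{R}[M]\subseteq\mathbb{R}[M]$, the identities $\pi_1+\pi_2=\mathbb{I}_{\mathbb{R}^n}$ and $\pi_1\pi_2=\pi_2\pi_1=0$ produce an associative algebra decomposition $\mathbb{R}[M]=R_1\oplus R_2$. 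The restriction map $\varphi_j:R_j\to\mathbb{R}[M_j]$, $P(M)\pi_j\mapsto P(M_j)$, is an algebra homomorphism; it is surjective because any $P(M_j)$ equals the restriction of $P(M)\pi_j$ to $\mathcal{E}_j$, and injective because $P(M)\pi_j$ already vanishes on $\mathcal{E}_{3-j}$ by construction, so vanishing on $\mathcal{E}_j$ forces it to be zero on all of $\mathbb{R}^n$.

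Now set $\tilde{\mathcal{G}}_j:=R_j\oplus\mathcal{E}_j$, viewed as a subspace of $\mathcal{G}_M=\mathbb{R}[M]\oplus\mathbb{R}^n$. Combining $\varphi_j$ with the identity on $\mathcal{E}_j$ gives a linear bijection $\tilde{\mathcal{G}}_j\to\mathcal{G}_{M_j}$ that is a Lie algebra isomorphism, since for $P(M)\pi_j\in R_j$ and $v\in\mathcal{E}_j$ one has $[P(M)\pi_j,v]_{\mathcal{G}_M}=P(M)v=P(M_j)v$, which is exactly the bracket in $\mathbb{R}[M_j]\ltimes\mathcal{E}_j$. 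The ideal property is then immediate: given $Q\in R_j$, $v\in\mathcal{E}_j$, $P\in\mathbb{R}[M]=R_1\oplus R_2$ and $w=w_1+w_2\in\mathbb{R}^n$, the bracket $[Q+v,P+w]=Qw-Pv$ splits as $Qw=Q\pi_j(w_1+w_2)\in\mathcal{E}_j$ (since $Q$ factors through $\pi_j$) and $Pv=(P_1+P_2)v=P_jv\in\mathcal{E}_j$ (since $P_{3-j}v=P'\pi_{3-j}v=0$), so both pieces land inside $\tilde{\mathcal{G}}_j$. The same factoring gives $[\tilde{\mathcal{G}}_1,\tilde{\mathcal{G}}_2]=0$, and the vector space identity $\mathcal{G}_M=\tilde{\mathcal{G}}_1\oplus\tilde{\mathcal{G}}_2$ follows from the two decompositions $\mathbb{R}[M]=R_1\oplus R_2$ and $\mathbb{R}^n=\mathcal{E}_1\oplus\mathcal{E}_2$.

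The one genuine obstacle is the first step: ensuring that $\pi_j$ lies in $\mathbb{R}[M]$ rather than merely in $C(M)$. This is precisely where nonderogatoriness of $M$ is indispensable; without it, invariant complementary decompositions may admit projections that are not polynomial in $M$, and the clean splitting of $\mathbb{R}[M]$ into $R_1\oplus R_2$ would fail. Once this hurdle is cleared, no further appeal to Jordan theory or primary decomposition is needed, and one does not even have to verify separately that $M_1$ and $M_2$ are themselves nonderogatory.
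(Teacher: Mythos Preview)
Your proof is correct and takes essentially the same approach as the paper's: both hinge on $C(M)=\mathbb{R}[M]$ (Lemma~\ref{nonderogatorymatrices}) to place operators attached to the invariant decomposition inside $\mathbb{R}[M]$. The paper works with the extensions-by-zero $\tilde{M}_j=M\pi_j$ rather than with the projections $\pi_j$ themselves, but your idempotent decomposition $\mathbb{R}[M]=\pi_1\mathbb{R}[M]\oplus\pi_2\mathbb{R}[M]$ makes the direct-sum step a touch more transparent.
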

\begin{proof} Under the assumptions of Lemma \ref{lem:splitting}
, let $M_j$ be the restriction of $M$ to the invariant subspace $\mathcal E_j,$ $j=1,2.$ We extend $M_{j}$ to a linear map $\tilde {M}_{j}$ on 
$\mathbb R^n$ in such a way that $\tilde{M}_{j}$ identically vanishes 
on $\mathcal{E}_{p}$ whenever $p\neq j.$ So we have $[\tilde {M}_{j} , M] =0$ for every $j=1,2.$ 
That is, $\tilde{M}_{j}$ and, in fact, every polynomial in $\tilde M_j$,  $j=1,2,$  are all  polynomials in $M$, 
hence they are all elements of $\mathbb R[M]$. More precisely $\mathbb R[\tilde M_j]$ is a subalgebra of $\mathbb R[M]$. But $\mathcal{E}_{j}$ being 
obviously an ideal of the Abelian Lie algebra $\mathbb R^n$,
we thus see that the Lie algebra $\mathcal  G_{M_{j}}$, identified with   
$\mathcal G_{{\tilde M}_{j}} =\mathbb R[\tilde{M}_{j}]\ltimes \mathcal{E}_{j}$, is a Lie subalgebra of the Lie algebra $\mathcal G_M:=\mathbb R[M]\ltimes \mathbb R^n.$ 
As a matter of fact, each $\mathcal  G_{M_j}$ is an ideal of $\mathcal G_M$. This is due to $\mathcal E_j$ being stable by $M$  and $\tilde M_j$ identically vanishing on $\mathcal E_p,$ whenever $p\neq j,$ in addition to the sums $M= \tilde M_1+\tilde M_2$ and $\mathbb R^n=\mathcal E_1\oplus\mathcal E_2.$ Thus, as the  ideals $\mathcal{G}_{M_j}$ form a direct sum (they only meet at $\{0\}$, unless they are identical), the Lie algebra $\mathcal G$ splits as the direct sum $\mathcal{G}:=\mathcal G_{{\tilde M}_{1}}\oplus \mathcal G_{{\tilde M}_{2}}$.
\end{proof}

Consider the general case where a real matrix $M$ admits $p$ real and $2q$ complex eigenvalues.  We write the characteristic polynomial $\chi_M(X)$ of $M$ as the product 
 $\chi_M(X) = Q_1(X)Q_2(X)$ where $Q_1(X)$ has only complex (nonreal) zeros, whereas  the zeros of  $Q_2(X)$ are all real.
Of course as $Q_1(X)$ and $Q_2(X)$ must be relatively prime, the 
Primary Decomposition Theorem combined with Cayley-Hamilton theorem imply the following
\begin{eqnarray}\mathbb R^n=\ker(\chi_M(M)) = \ker(Q_1(M))\oplus \ker(Q_2(M)). \end{eqnarray} 
As $\ker Q_1(M)$ and $\ker Q_2(M)$  are both stable by $M$, Lemma \ref{lem:splitting} reduces the proof of Theorem \ref{thm:classification} to two cases: the case where all the eigenvalues of $M$ are real and the case where all the eigenvalues of $M$ are complex. 

\subsection{Nonderogatory matrices with only real eigenvalues}\label{Sect:uniqueeigenvalue}
Let $M$ be a  nonderogatory $n\times n$ real matrix with p distinct eigenvalues $\lambda_1, \dots,\lambda_p,$  all of which are real and of respective multiplicity $k_1,\dots, k_p$, with 
$k_1+\dots+ k_p=n$, so that its characteristic polynomial factorizes as
 $\chi_M(X)=\chi_1(X)\chi_2(X)\cdots\chi_p(X)$, 
where $\chi_j(X)=(X-\lambda_j)^{k_j}$, for $j=1,\dots,p$. The polynomials $\chi_j(X)$
 are pairwise relatively prime. Indeed, any divisor of $\chi_j(X)$ is of the form $(X-\lambda_j)^{r_j}$ for some integer $r_j\leq k_j,$
 for every $j=1,\dots,p.$
So any common divisor of $\chi_i(X)$ and $\chi_j(X)$ would simultaneously be of the forms $(X-\lambda_i)^{r_i}$ and 
$(X-\lambda_j)^{r_j}$, so that the equality
 $(X-\lambda_i)^{r_i} = (X-\lambda_j)^{r_j}$ entails $ r_i=r_j=0$ or $\lambda_i=\lambda_j$ and hence $i=j.$
By the 
Primary Decomposition Theorem and Cayley-Hamilton theorem,  we have 
\begin{eqnarray}
\label{eq:split}\mathbb R^n=\ker\chi_M(M) = \ker\chi_1(M)\oplus\ker\chi_2(M)\oplus\cdots\oplus\ker\chi_p(M).\end{eqnarray} 

Of course the subspaces $\mathcal E_j:=\ker(M-\lambda_j)^{k_j},$ $j=1,\dots,p,$ are all stable by $M$, due to the fact that the endomorphisms $M$ and $(M-\lambda_j)^{k_j}$ commute.

\begin{lemma}\label{lem:nonderogatorysubmatrices}The restriction $M_j$ of $M$ to $\mathcal E_j:=\ker(M-\lambda_j)^{k_j}$ is again a nonderogatory endomorphism of $\mathcal E_j$ with a unique eigenvalue,
for every $j=1,\cdots,p.$
\end{lemma}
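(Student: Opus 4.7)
The plan is to show that $M_j$ has $\lambda_j$ as its only eigenvalue, that $\dim\mathcal E_j=k_j$, and then to propagate the nonderogatory property of $M$ to the restriction $M_j$ via the minimal polynomial.

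First, I would observe that $\mathcal E_j$ is indeed $M$-invariant, since $(M-\lambda_j)^{k_j}$ commutes with $M$, so that the restriction $M_j:\mathcal E_j\to\mathcal E_j$ is well defined. By the very definition of $\mathcal E_j$, we have $(M_j-\lambda_j\,\mathbb I_{\mathcal E_j})^{k_j}=0$, hence the minimal polynomial of $M_j$ is of the form $(X-\lambda_j)^{r_j}$ with $1\le r_j\le k_j$. In particular $\lambda_j$ is the unique eigenvalue of $M_j$, which establishes the first claim.

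Next, I would determine the dimension of $\mathcal E_j$. Since $\lambda_j$ is the only eigenvalue of $M_j$, its characteristic polynomial is $\chi_{M_j}(X)=(X-\lambda_j)^{d_j}$ with $d_j:=\dim\mathcal E_j$. On the other hand, the direct sum decomposition (\ref{eq:split}) together with the invariance of each $\mathcal E_j$ yields
\begin{equation*}
\chi_M(X)=\prod_{j=1}^p \chi_{M_j}(X)=\prod_{j=1}^p (X-\lambda_j)^{d_j}.
\end{equation*}
Comparing with $\chi_M(X)=\prod_{j=1}^p(X-\lambda_j)^{k_j}$ and using uniqueness of the factorization of polynomials over $\mathbb R$, we deduce $d_j=k_j$. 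Hence $\chi_{M_j}(X)=(X-\lambda_j)^{k_j}$.

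Finally, I would conclude that $r_j=k_j$ by using the nonderogatory property of $M$. Since $M$ is nonderogatory, Lemma \ref{nonderogatorymatrices} gives that its minimal polynomial equals its characteristic polynomial, that is $\prod_{j=1}^p(X-\lambda_j)^{k_j}$. But the minimal polynomial of an endomorphism that splits as a direct sum $M_1\oplus\dots\oplus M_p$ of endomorphisms of invariant subspaces is the least common multiple of the minimal polynomials of the components. As the polynomials $(X-\lambda_j)^{r_j}$ are pairwise coprime, this lcm equals $\prod_{j=1}^p(X-\lambda_j)^{r_j}$, and comparing with $\prod_{j=1}^p(X-\lambda_j)^{k_j}$ gives $r_j=k_j$ for every $j$. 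Therefore the minimal and characteristic polynomials of $M_j$ coincide, and Lemma \ref{nonderogatorymatrices} concludes that $M_j$ is nonderogatory.

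The argument is essentially a direct application of Primary Decomposition combined with Lemma \ref{nonderogatorymatrices}; the only subtle point is justifying the factorization of the minimal polynomial of $M$ through the lcm of the minimal polynomials of the $M_j$'s, which is really the heart of the proof but remains standard.
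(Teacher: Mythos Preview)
Your proof is correct and follows essentially the same route as the paper: both arguments show that if some $M_j$ had minimal polynomial $(X-\lambda_j)^{r_j}$ with $r_j<k_j$, then $M$ itself would be annihilated by a polynomial of degree $<n$, contradicting nonderogatoriness. You phrase this via the standard fact that the minimal polynomial of a direct sum is the lcm of the minimal polynomials of the summands, while the paper instead writes down the explicit annihilator $T(X)=\chi_1(X)\cdots\chi_{j-1}(X)\,m_j(X)\,\chi_{j+1}(X)\cdots\chi_p(X)$ and observes $\deg T<\deg\chi_M$; these are two presentations of the same idea, and your version has the minor advantage of also justifying $\dim\mathcal E_j=k_j$, which the paper asserts without comment.
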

\begin{proof} The characteristic polynomial of the restriction $M_j$ of $M$ to the subspace $\mathcal E_j:=\ker(M-\lambda_j)^{k_j}$ is 
$  \chi_j(X)=(X-\lambda_j)^{k_j}$. If the minimal polynomial  
of $M_j$, hereafter denoted by $ m_j(X)$, were different from $\chi_j(X)$, then there would exist some integer $r_j$ with $1\leq r_j<k_j,$ such that 
$ m_j(X)=(X-\lambda_j)^{r_j}$. Furthermore, the polynomial 
$T(X) =    \chi_1(X)\chi_2(X)\dots \chi_{j-1}(X) m_j(X) \chi_{j+1}(X) \dots \chi_{p-1}(X)\chi_{p}(X)$ would be of lower degree than $\chi(X)$ and yet  would  
satisfy $T(M)=0$. Thus the minimal polynomial of $M$ would divide $T(M)$ and would hence be different 
from the characteristic polynomial of $M$. This would contradict the fact that $M$ is nonderogatory.
\end{proof}

The above shows  that, if a nonderogatory matrix $M$ has only real eigenvalues, $\lambda_1,\cdots,\lambda_p$
of respective multiplicity $k_1,\dots, k_p$, then  its restriction $M_j$ to each subspace $\mathcal E_j:=\ker(M-\lambda_j)^{k_j}$ 
is again a nonderogatory endomorphism with 
a unique eigenvalue $\lambda_j$ of multiplicity $k_j$ and $\mathbb R^n $ splits as a direct sum $\mathbb R^n = \mathcal E_1\oplus\cdots\oplus \mathcal E_p.$
To conclude that $M$ has a Jordan form,  we now need to show that each $M_j$ has the following well known Jordan form.
\begin{lemma}[Jordan form]\label{lem:uniquerealeigenvalueJordanform}
If a real nonderogatory  $n\times n$ matrix $M$ has a unique eigenvalue $\lambda$ with multiplicity $n$, 
then there exists a basis $(\tilde e_1', \cdots, \tilde e_n')$ of $\mathbb R^n$ in which $M$ has the form
$M_\lambda =\lambda \mathbb I_{\mathbb R^n} + \displaystyle\sum_{i=1}^{n-1}E_{i,i+1}$
\end{lemma}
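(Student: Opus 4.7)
The plan is to invoke the cyclic-vector characterization from Lemma \ref{nonderogatorymatrices}, shift by $\lambda$ to reduce to the nilpotent case, and then simply reverse the order of the cyclic basis.

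First I would set $N:=M-\lambda \mathbb I_{\mathbb R^n}$. Since $M$ is nonderogatory with unique eigenvalue $\lambda$ of multiplicity $n$, the characteristic polynomial and the minimal polynomial of $M$ both equal $(X-\lambda)^n$, and therefore the characteristic and minimal polynomials of $N$ both equal $X^n$. In particular $N^n=0$ but $N^{n-1}\ne 0$, and by equivalence $(4)\Leftrightarrow(5)$ of Lemma \ref{nonderogatorymatrices} the matrix $N$ is itself nonderogatory.

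Next, by the equivalence $(2)\Leftrightarrow(5)$ of Lemma \ref{nonderogatorymatrices} applied to $N$, there exists $\bar x\in\mathbb R^n$ such that $\bigl(\bar x, N\bar x,\dots, N^{n-1}\bar x\bigr)$ is a basis of $\mathbb R^n$; equivalently $N^{n-1}\bar x\ne 0$. I then define the reversed basis
\begin{eqnarray}
\tilde e_i':= N^{\,n-i}\bar x,\qquad i=1,\dots,n.
\end{eqnarray}
A direct computation using $M=\lambda\mathbb I_{\mathbb R^n}+N$ and $N^n=0$ gives
\begin{eqnarray}
M\tilde e_1' &=& \lambda N^{n-1}\bar x + N^{n}\bar x = \lambda \tilde e_1', \nonumber\\
M\tilde e_i' &=& \lambda N^{n-i}\bar x + N^{n-i+1}\bar x = \lambda \tilde e_i' + \tilde e_{i-1}',\qquad i=2,\dots,n.
\end{eqnarray}
Reading off the coordinates of $M\tilde e_i'$ in the basis $(\tilde e_1',\dots,\tilde e_n')$, the matrix of $M$ has $\lambda$ on every diagonal entry, $1$ on every entry immediately above the diagonal, and $0$ elsewhere. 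This is precisely $M_\lambda = \lambda\mathbb I_{\mathbb R^n}+\displaystyle\sum_{i=1}^{n-1}E_{i,i+1}$, as required.

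There is essentially no obstacle: the nonderogatory hypothesis together with Lemma \ref{nonderogatorymatrices} delivers the cyclic vector for $N$ for free, and the only trick is to use the reversed cyclic basis $(N^{n-1}\bar x, N^{n-2}\bar x,\dots,\bar x)$ rather than $(\bar x, N\bar x,\dots,N^{n-1}\bar x)$ so that $N$ acts as the shift $\tilde e_i'\mapsto \tilde e_{i-1}'$ (annihilating $\tilde e_1'$), which yields the standard upper-triangular Jordan normal form with superdiagonal $1$'s rather than its transpose.
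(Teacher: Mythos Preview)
Your proof is correct and is genuinely cleaner than the paper's. The key difference is \emph{when} you subtract $\lambda$: you shift first, setting $N=M-\lambda\mathbb I_{\mathbb R^n}$, and then take the reversed cyclic basis $(N^{n-1}\bar x,\dots,\bar x)$ for the nilpotent $N$; this immediately puts $M$ in Jordan form. The paper instead takes the reversed cyclic basis $(M^{n-1}\bar x,\dots,\bar x)$ for $M$ itself, which only produces a companion-type matrix $\bar M_\lambda=\sum_j\bar k_jE_{j,1}+\sum_jE_{j,j+1}$ with $\bar k_j=\complement_n^j(-1)^{j+1}\lambda^j$ coming from Cayley--Hamilton; the paper must then explicitly construct a lower-triangular change-of-basis matrix $P$ (with entries $p_{ij}=(-1)^{i-j}\lambda^{i-j}\complement_{n-j}^{i-j}$) satisfying $\bar M_\lambda P=PM_\lambda$ to finish. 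Your shift-first trick avoids that entire computation, at the cost of invoking Lemma \ref{nonderogatorymatrices} for $N$ rather than $M$, which is harmless since nonderogatory is preserved under translation by scalars.
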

\begin{proof}
Let $M$ be a real $n\times n$ nonderogatory matrix with a unique real eigenvalue $\lambda$ with multiplicity $n.$   So, up to a sign, its characteristic  polynomial is $ P_M(X)=(X-\lambda)^n=\displaystyle\sum_{i=0}^n \complement_n^i(-1)^{n-i} \lambda^{n-i} X^{i},$  with $\complement_p^q=\frac{p!}{q!(p-q)!}$, for $p\ge q$. 
By Cayley-Hamilton's Theorem 
$M^n=\displaystyle\sum_{i=0}^{n-1} \complement_n^i(-1)^{n-i+1} \lambda^{n-i} M^{i} $.

Choose $\bar x\in\mathbb R^n$ such that   $(\bar e_1,\dots,\bar e_{n})=$$(  M^{n-1}\bar x, \; M^{n-2}\bar x, \; \cdots,  \;  M^{n-j}\bar x, \cdots, \; \bar x)$ is a basis of $\mathbb R^n.$  We have  
$M\bar e_1=M^n\bar x=\displaystyle\sum_{i=0}^{n-1} \complement_n^i(-1)^{n-i+1} \lambda^{n-i} M^{i}\bar x =
\displaystyle\sum_{j=1}^{n} \complement_n^j(-1)^{j+1} \lambda^{j} \bar e_j$ and $M\bar e_j=\bar e_{j-1}$, for $j=2,\dots,n$. In the basis $(\bar e_j)$, the matrix $M$ has the following form
$\bar M_\lambda=\displaystyle\sum_{j=1}^{n} \bar k_j E_{j,1}+\displaystyle\sum_{j=1}^{n-1}E_{j,j+1},$
with $\bar k_j=\complement_n^j(-1)^{j+1} \lambda^{j}.$
If $\lambda=0$, then we are done. If $\lambda\neq 0,$ we use a direct approach by looking for the coefficients $p_{i,j}$ of a matrix $P$ such that  $\bar M_\lambda P=PM_\lambda.$
Using the explicit expressions 
 of $PM_\lambda$ and 
of  $\bar M_\lambda P$
and by a direct identification of the coefficients, we get 

$p_{k,r}=\sum\limits_{j=0}^{r-1} (-1)^{k-r+j} \lambda^{k-r+j} \complement_{n-r+j}^{k-r+j} p_{1,j+1}$,  

\noindent
where  the numbers $ p_{1,j+1}$, $j=0,\dots,n-1,$ are seen as parameters.
In particular, the matrix $P$ whose coefficients in the above basis ($\bar e_s$) are

$p_{ij}=\left\{\begin{array}{cll}
(-1)^{i-j}\lambda^{i-j}\complement_{n-j}^{i-j} &\text{ if }& i\ge j, \\
0 &\text{ if }& i<j
\end{array}\right.
$

\noindent
is solution, where $\complement_p^q=\frac{p!}{q!(p-q)!}$, $p\ge q$.
\end{proof}
\begin{definition}[Notation]\label{def:D0n}
When $M$  is an $n\times n$ nonderogatory  real matrix with a unique real eigenvalue $\lambda$ of multiplicity $n$, we denote by $\mathfrak{D}_\lambda^n$, the corresponding 2-step solvable Lie algebra $\mathcal G_M.$
\end{definition}
\begin{lemma}\label{lem:multiplerealeigenvalues}Suppose $M$ is a nonderogatory  $n\times n$ real matrix  with $p$ distinct eigenvalues $\lambda_1, \cdots,\lambda_p,$  all of which are real and of respective multiplicity $k_1,\cdots, k_p$, where
$k_1+\cdots+ k_p=n.$ Then the corresponding  Lie algebra $\mathcal G_M$ is isomorphic to the direct sum 
$ \mathfrak{D}_{\lambda_1}^{k_1}\oplus \cdots \oplus \mathfrak{D}_{\lambda_p}^{k_p}$ of the Lie algebras $\mathfrak{D}_{\lambda_i}^{k_i},$ $i=1,\dots,p.$
\end{lemma}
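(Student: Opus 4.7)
The plan is to assemble the lemma from the three previous ingredients: the Primary Decomposition (equation (\ref{eq:split})), the splitting Lemma \ref{lem:splitting}, and the ``unique eigenvalue'' information contained in Lemmas \ref{lem:nonderogatorysubmatrices} and \ref{lem:uniquerealeigenvalueJordanform}. I would proceed as follows.

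First, I would write the characteristic polynomial of $M$ as $\chi_M(X)=\prod_{j=1}^p (X-\lambda_j)^{k_j}$, observe that the $(X-\lambda_j)^{k_j}$ are pairwise relatively prime, and invoke the Primary Decomposition Theorem together with Cayley--Hamilton to obtain the $M$-invariant decomposition $\mathbb R^n=\mathcal E_1\oplus\cdots\oplus\mathcal E_p$, where $\mathcal E_j:=\ker(M-\lambda_j)^{k_j}$, as already recorded in (\ref{eq:split}). All this is essentially free from the earlier discussion.

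Next, I would extend Lemma \ref{lem:splitting} from two summands to $p$ summands by induction on $p$. The base case $p=2$ is precisely Lemma \ref{lem:splitting}; for the inductive step, group $\mathcal E_2\oplus\cdots\oplus\mathcal E_p$ as a single $M$-invariant complement of $\mathcal E_1$ and apply Lemma \ref{lem:splitting} once to split off $\mathcal G_{M_1}=\mathbb R[M_1]\ltimes\mathcal E_1$ as an ideal, then apply the induction hypothesis to the restriction of $M$ to $\mathcal E_2\oplus\cdots\oplus\mathcal E_p$. This gives
\begin{equation*}
\mathcal G_M\;\cong\;\mathcal G_{M_1}\oplus\mathcal G_{M_2}\oplus\cdots\oplus\mathcal G_{M_p},
\end{equation*}
where $M_j$ denotes the restriction of $M$ to $\mathcal E_j$.

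Third, I would identify each summand. By Lemma \ref{lem:nonderogatorysubmatrices}, each $M_j$ is a nonderogatory endomorphism of $\mathcal E_j$ whose only eigenvalue is $\lambda_j$, with multiplicity $k_j=\dim\mathcal E_j$. Lemma \ref{lem:uniquerealeigenvalueJordanform} then furnishes a basis of $\mathcal E_j$ in which $M_j$ takes the canonical Jordan form $\lambda_j\mathbb I_{\mathbb R^{k_j}}+\sum_{i=1}^{k_j-1}E_{i,i+1}$. Consequently, by Definition \ref{def:D0n}, the Lie algebra $\mathcal G_{M_j}=\mathbb R[M_j]\ltimes\mathcal E_j$ is, up to the obvious conjugation induced by the change of basis, exactly $\mathfrak D_{\lambda_j}^{k_j}$. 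Substituting this identification into the decomposition of the previous paragraph yields the claimed isomorphism
\begin{equation*}
\mathcal G_M\;\cong\;\mathfrak D_{\lambda_1}^{k_1}\oplus\cdots\oplus\mathfrak D_{\lambda_p}^{k_p}.
\end{equation*}

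The only real subtlety is the inductive extension of Lemma \ref{lem:splitting}, which must be stated for a direct sum of several $M$-invariant subspaces rather than two; but since the extensions $\tilde M_j$ that vanish outside $\mathcal E_j$ still commute pairwise and sum to $M$, the argument of Lemma \ref{lem:splitting} transfers verbatim. Everything else is a straightforward bookkeeping of the Primary Decomposition, the nonderogatory property, and the Jordan normal form, so no substantial new computation is required.
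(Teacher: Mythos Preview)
Your proof is correct and follows essentially the same approach as the paper: apply the Primary Decomposition (\ref{eq:split}), use Lemma \ref{lem:splitting} (extended to $p$ summands) to split $\mathcal G_M$ into the $\mathcal G_{M_j}$, and then identify each $\mathcal G_{M_j}$ via Lemma \ref{lem:nonderogatorysubmatrices} and Definition \ref{def:D0n}. The only minor difference is that you invoke Lemma \ref{lem:uniquerealeigenvalueJordanform} to put $M_j$ in Jordan form before applying Definition \ref{def:D0n}, whereas the paper uses the definition directly---since Definition \ref{def:D0n} declares $\mathfrak D_\lambda^k:=\mathcal G_M$ for \emph{any} nonderogatory $M$ with unique eigenvalue $\lambda$, no change of basis is actually needed at this stage.
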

\begin{proof} Lemma \ref{lem:splitting} to 
Equality (\ref{eq:split}), where $\ker\chi_j(M)=\ker(M-\lambda_j \mathbb I_{\mathbb R^n})^{k_j} =:\mathcal E_j,$ $j=1,\dots,p,$  leads to $\mathcal G_{M} = \mathcal G_{M_1}\oplus\dots\oplus\mathcal G_{M_p}$, where $M_j$ is the restriction of $M$ to $\mathcal E_j.$
 Taking Lemma \ref{lem:nonderogatorysubmatrices} into account, each $M_j$ is a nonderogatory $k_j\times k_j$  real matrix with a unique eigenvalue $\lambda_j$ of multiplicity $k_j$, so that $\mathcal G_{M_j}$ is exactly $\mathfrak{D}_{\lambda_j}^{k_j},$ by Definition \ref{def:D0n}., and thus
$\mathcal G_M= \mathfrak{D}_{\lambda_1}^{k_1}\oplus \dots \oplus \mathfrak{D}_{\lambda_p}^{k_p}$.
\end{proof}

\begin{lemma}\label{lem:uniquerealeigenvalue}
If a real $n\times n$ matrix $M$ admits a unique real eigenvalue $\lambda$, then the Lie algebra $\mathcal G_M =:\mathfrak{D}_{\lambda}^{n} $ is isomorphic to ${\mathfrak D}_{0}^{n}$ as in Example \ref{ex:D0n} \;.
\end{lemma}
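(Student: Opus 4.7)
The plan is to reduce the statement to the two previous lemmas already proved in this section, namely Lemma \ref{lem:uniquerealeigenvalueJordanform} (Jordan form) and the observation in Remark \ref{rmq:nonderogatory} (conjugacy of the defining matrices produces isomorphic Lie algebras $\mathcal G_M$). Since $M$ is nonderogatory with a unique real eigenvalue $\lambda$, Lemma \ref{lem:uniquerealeigenvalueJordanform} furnishes an invertible matrix $P$ such that $N:=P^{-1}MP=\lambda\,\mathbb I_{\mathbb R^n}+M_0$, where $M_0=\sum_{i=1}^{n-1}E_{i,i+1}$ is the principal nilpotent matrix used to define $\mathfrak{D}_0^n$ in Example \ref{ex:D0n}.

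The first key step is to identify the polynomial algebras $\mathbb R[N]$ and $\mathbb R[M_0]$ as \emph{equal} subalgebras of $\mathcal M(n,\mathbb R)$. This is immediate since $M_0=N-\lambda\,\mathbb I_{\mathbb R^n}\in\mathbb R[N]$ and conversely $N=\lambda\,\mathbb I_{\mathbb R^n}+M_0\in\mathbb R[M_0]$, so each is contained in the other. Consequently, as Lie algebras acting on $\mathbb R^n$ by matrix multiplication,
\begin{eqnarray}
\mathcal G_N=\mathbb R[N]\ltimes\mathbb R^n=\mathbb R[M_0]\ltimes\mathbb R^n=\mathcal G_{M_0}=\mathfrak D_0^n.\nonumber
\end{eqnarray}

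The second step is to transport this equality back to $M$ via the conjugation $M=PNP^{-1}$. By Remark \ref{rmq:nonderogatory}, the linear map $\xi:\mathcal G_N\to\mathcal G_M$ defined by $\xi(a)=PaP^{-1}$ for $a\in\mathbb R[N]$ and $\xi(x)=Px$ for $x\in\mathbb R^n$ is a Lie algebra isomorphism, giving $\mathcal G_M\cong\mathcal G_N=\mathfrak D_0^n$, which is exactly the claim.

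There is essentially no obstacle: the whole argument is a direct assembly of Lemma \ref{lem:uniquerealeigenvalueJordanform} and Remark \ref{rmq:nonderogatory}. The only mildly nontrivial point to emphasize is the equality $\mathbb R[\lambda\,\mathbb I_{\mathbb R^n}+M_0]=\mathbb R[M_0]$, which makes the definition of $\mathfrak D_\lambda^n$ (Definition \ref{def:D0n}) coincide with the model $\mathfrak D_0^n$ once $M$ has been put in Jordan form, so that the translation by $\lambda\,\mathbb I_{\mathbb R^n}$ does not enlarge or change the associated matrix algebra.
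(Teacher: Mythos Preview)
Your proof is correct and follows essentially the same approach as the paper: put $M$ in Jordan form $M_\lambda=\lambda\,\mathbb I_{\mathbb R^n}+M_0$ via Lemma \ref{lem:uniquerealeigenvalueJordanform}, then observe that $\mathbb R[M_\lambda]=\mathbb R[M_0]$ (the paper phrases this as $C(M_0)=C(M_\lambda)=\mathbb K[M_0]$), so that $\mathcal G_{M_\lambda}=\mathcal G_{M_0}=\mathfrak D_0^n$. The only difference is that you make the conjugation step $\mathcal G_M\cong\mathcal G_{M_\lambda}$ explicit via Remark \ref{rmq:nonderogatory}, whereas the paper absorbs it into the phrase ``consider a basis in which $M$ has the form $M_\lambda$''.
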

\proof
Following Lemma \ref{lem:uniquerealeigenvalueJordanform}, consider a basis in which $M$ has the following form
$M_\lambda=\lambda\mathbb I_{\mathbb R^n}+\displaystyle\sum_{i=1}^{n-1}E_{i,i+1}$ and set $M_0:=\displaystyle\sum_{i=1}^{n-1}E_{i,i+1}.$
Note that $M_0$ is given in the same basis as $M_\lambda$ and  it has the same form as the one  in Example 
\ref{ex:D0n}. 
 One notes that the two nonderogatory matrices $M_0$ and $M_\lambda$ commute. More precisely,  $M_\lambda$ is a polynomial in  $M_0$ as it 
 can be written as $M_\lambda=\lambda( M_0)^0+M_0.$ Hence $C(M_0)=C(M_\lambda)=\mathbb K[M_0]$.  Thus we have  ${\mathfrak D}_{0}^{n} = \mathcal G_{M_0} = \mathcal G_{M_\lambda} =\mathfrak{D}_{\lambda}^{n}  .$
\qed

Lemmas \ref{lem:multiplerealeigenvalues} and \ref{lem:uniquerealeigenvalue} prove the following.
\begin{proposition}\label{prop:classification-multiple-eigenvalues}
Suppose $M$ is a nonderogatory  $n\times n$ real matrix  with $p$ distinct eigenvalues $\lambda_1, \cdots,\lambda_p,$  all of which are real and of respective multiplicity $k_1,\cdots, k_p$, where
$k_1+\cdots+ k_p=n.$ Then the Lie algebra $\mathcal G_M$ is isomorphic to the direct sum $ \mathfrak{D}_{0}^{k_1}\oplus \cdots \oplus \mathfrak{D}_{0}^{k_p}$ of the Lie algebras $\mathfrak{D}_{0}^{k_j},$ $j=1,\dots,p.$
\end{proposition}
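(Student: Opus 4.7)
The plan is to obtain Proposition \ref{prop:classification-multiple-eigenvalues} as an immediate consequence of the two preceding lemmas (Lemma \ref{lem:multiplerealeigenvalues} and Lemma \ref{lem:uniquerealeigenvalue}), which have already done all the substantive work. No further combinatorial or cohomological input is needed, since both the decomposition and the identification of each summand are already established.

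First, I would invoke Lemma \ref{lem:multiplerealeigenvalues}: under the hypotheses of the proposition, the characteristic polynomial of $M$ factorizes as $\chi_M(X)=\prod_{j=1}^p(X-\lambda_j)^{k_j}$, the factors are pairwise coprime, and the Primary Decomposition Theorem together with Cayley--Hamilton gives $\mathbb R^n=\bigoplus_{j=1}^p \mathcal E_j$ with $\mathcal E_j:=\ker(M-\lambda_j\mathbb I_{\mathbb R^n})^{k_j}$ stable under $M$. Writing $M_j:=M\big|_{\mathcal E_j}$, Lemma \ref{lem:nonderogatorysubmatrices} ensures that $M_j$ is itself a nonderogatory endomorphism of $\mathcal E_j$ with a unique eigenvalue $\lambda_j$ of multiplicity $k_j$, so by the notation of Definition \ref{def:D0n}, $\mathcal G_{M_j}=\mathfrak{D}_{\lambda_j}^{k_j}$. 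The splitting lemma (Lemma \ref{lem:splitting}), applied inductively to the decomposition $\mathbb R^n=\mathcal E_1\oplus\cdots\oplus \mathcal E_p$, then yields the Lie algebra direct sum
\begin{eqnarray*}
\mathcal G_M \;\cong\; \mathfrak{D}_{\lambda_1}^{k_1}\oplus\cdots\oplus \mathfrak{D}_{\lambda_p}^{k_p}.
\end{eqnarray*}

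Second, I would apply Lemma \ref{lem:uniquerealeigenvalue} to each summand: for every $j=1,\dots,p$, the Lie algebra $\mathfrak{D}_{\lambda_j}^{k_j}$ is isomorphic to $\mathfrak{D}_0^{k_j}$. Substituting these isomorphisms into the decomposition above gives the desired conclusion
\begin{eqnarray*}
\mathcal G_M \;\cong\; \mathfrak{D}_0^{k_1}\oplus\cdots\oplus \mathfrak{D}_0^{k_p}.
\end{eqnarray*}

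There is no genuine obstacle here: the only subtlety worth noting explicitly is that Lemma \ref{lem:splitting} is stated for a $2$-fold splitting, so I would either remark that it extends trivially by induction on $p$, or equivalently group the decomposition into $\mathcal E_1$ and $\mathcal E_2\oplus\cdots\oplus \mathcal E_p$, iterating the lemma until all $p$ factors are peeled off. This is the only step that is not a verbatim citation, and it is completely routine given that each $\mathcal E_j$ is $M$-invariant and that the complementary sums remain $M$-invariant at each stage.
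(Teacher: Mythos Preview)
Your proof is correct and follows essentially the same approach as the paper: invoke Lemma~\ref{lem:multiplerealeigenvalues} to obtain $\mathcal G_M\cong\mathfrak{D}_{\lambda_1}^{k_1}\oplus\cdots\oplus\mathfrak{D}_{\lambda_p}^{k_p}$, then apply Lemma~\ref{lem:uniquerealeigenvalue} to replace each $\mathfrak{D}_{\lambda_j}^{k_j}$ by $\mathfrak{D}_0^{k_j}$. Your additional unpacking of the content of Lemma~\ref{lem:multiplerealeigenvalues} and the remark on iterating Lemma~\ref{lem:splitting} are accurate but not strictly needed, since those points are already absorbed into the proof of Lemma~\ref{lem:multiplerealeigenvalues} itself.
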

\begin{proof}Indeed, if $M$ is a nonderogatory  $n\times n$ real matrix  with $p$ real eigenvalues $\lambda_1, \cdots,\lambda_p,$ 
 of respective multiplicity $k_1,\cdots, k_p$, where
$k_1+\cdots+ k_p=n,$ then Lemma \ref{lem:multiplerealeigenvalues} ensures that $\mathcal G_M$ is isomorphic to the direct sum $ \mathfrak{D}_{\lambda_1}^{k_1}\oplus \cdots \oplus \mathfrak{D}_{\lambda_p}^{k_p}$ and Lemma  \ref{lem:uniquerealeigenvalue} further proves that each $\mathfrak{D}_{\lambda_j}^{k_j}$ is isomorphic to $\mathfrak{D}_{0}^{k_j}$, for  $j=1,\dots,p.$ 
\end{proof}
This concludes the proof of Theorem \ref{thm:classification} in the case where $M$ is a nonderogatory  $n\times n$ real matrix  with only real eigenvalues.

\subsection{Nonderogatory matrices all of whose eigenvalues are complex}\label{Sect:complexeigenvalues}

\subsubsection{Real matrices diagonalizable in $\mathbb C$}
Let $M$ be a nonderogatory $n\times n$ real matrix all of whose eigenvalues are complex (and not real),  in this case  $n$ is even. We suppose that $M$ is diagonalizable in $\mathbb C$. Thus, $M$ being nonderogatory imposes that all the eigenvalues are of multiplicity $1$ and hence pair-wise distinct. That is, $M$ has $n$ distinct (complex) eigenvalues $\lambda_j,$ $\bar \lambda_j,$ $j=1,\dots,\frac{n}{2}.$ Let $\lambda =\lambda_R-i\lambda_I$  be an eigenvalue of $M$,  where $\lambda_R,\lambda_I$ are real numbers and $\lambda_I\neq 0$.
 Consider an  eigenvector  $v$ (with complex components) of $M$ with corresponding eigenvalue $\lambda$. Further set $\mathcal E_\lambda:=
\{ z v+\overline{zv}, z\in \mathbb C\}.$  Note that, as $\bar v$ is also an eigenvector of $M$ with eigenvalue $\bar \lambda$, we also have $\mathcal E_\lambda=\mathcal E_{\bar\lambda}.$ Furthermore,  $\mathcal E_\lambda$ is a real 2-dimensional vector subspace of $\mathbb R^n$ 
which is stable by $M$ and 
 the vectors $Re(v)$ and $Im(v)$ form a basis of  $\mathcal E_\lambda$ in which the matrix  of the restriction of $M$ is
 $M_\lambda:=\begin{pmatrix}\lambda_R&-\lambda_I\\
\lambda_I &\lambda_R\end{pmatrix}.$ 
Indeed, any element of  $\mathcal E_\lambda$ is of the form 
$  z v+\overline{zv} = 2Re(z)\; Re(v) -2 Im(z)\; Im(v) $
and taking the real and imaginary parts of both sides of the equation 
\begin{eqnarray}Mv&=&\lambda v = \Big(\lambda_R-i\;\lambda_I\Big)\; \Big(Re(v)+i\;Im(v)\Big) \nonumber\\
&=& \Big(\lambda_R\; Re(v) + \lambda_I\; Im(v)\Big) + i \;\Big(\lambda_R\; Im(v) -\lambda_I \;Re(v)\Big), \end{eqnarray}
 one gets 
\begin{eqnarray} 
 M Re(v)=\lambda_R\; Re(v) + \lambda_I\; Im(v), \nonumber\\
 M Im(v)=- \lambda_I\; Re(v) + \lambda_R\; Im(v). 
\end{eqnarray} 
Note that  $M_\lambda$ is a nonderogatory $2\times 2$ real matrix whose characteristic polynomial is $\chi_\lambda(X)=(X-\lambda_R)^2+\lambda_I^2$ and both $Re(v)$ and $Im(v)$ are in $\ker\chi_\lambda(M) .$  We denote by $\mathcal A_{\lambda}= \mathbb R \mathbb I_{\mathcal E_\lambda}\oplus \mathbb R  M_\lambda$ the 2-dimensional real vector space generated by the endomorphisms $\mathbb  I_{\mathcal E_\lambda}$ and $M_\lambda$ of $\mathcal E_\lambda.$ So $\mathbb R[M_\lambda]=\mathcal A_{\lambda}$ and the $4$-dimensional space $\mathcal A_{\lambda}\oplus \mathcal E_\lambda$, inherits the Lie bracket of the semi-direct sum  $\mathcal  G_{{ M}_{\lambda}}:=\mathcal A_{\lambda}\ltimes \mathcal E_\lambda$. More precisely,  in the basis 
$e_1=\mathbb I_{\mathcal E_\lambda}= M_\lambda^0,$ $e_2=M_\lambda,$ $e_3=Re(v),$ $e_4=Im(v)$,  such a Lie bracket  reads
$[e_1,e_3] = e_3,$ $[e_1,e_4] = e_4,$ $[e_2,e_3] = Re(\lambda ) e_3 +  Im(\lambda) e_4$ and  $[e_2,e_3] = -  Im(\lambda) e_3 +Re(\lambda) e_4.$ 
The following holds.
 \begin{lemma}\label{lem:multiple-complex-eigenvalues}
Let $M$ be a nonderogatory $n\times n$ real matrix all of whose eigenvalues, say $\lambda_j, \bar \lambda_j,$ $j=1,\dots,q,$ are complex. Suppose $M$ is diagonalizable in $\mathbb C$, in which case  the eigenvalues are pairwise distinct and $2q=n.$ For each couple $\lambda_j, \bar \lambda_j  $, let  $\mathcal  G_{{\tilde M}_{\lambda_j}}$ be the 4-dimensional Lie algebra with Lie bracket 
$[e_1,e_3] = e_3,$ $[e_1,e_4] = e_4,$ $[e_2,e_3] = Re(\lambda_j) e_3 +  Im(\lambda_j) e_4$,  $[e_2,e_3] = -  Im(\lambda_j) e_3 +Re(\lambda_j) e_4.$ Then the Lie algebra  $\mathcal  G_{M}$ is isomorphic to the direct sum
 \begin{eqnarray} 
\mathcal  G_{M}= \mathcal  G_{{M}_{\lambda_1}}\oplus\dots\oplus\mathcal  G_{{M}_{\lambda_q}}.
\end{eqnarray} 
 \end{lemma}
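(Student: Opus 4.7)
The plan is to reduce Lemma \ref{lem:multiple-complex-eigenvalues} to a repeated application of Lemma \ref{lem:splitting}, in the spirit of the proof of Proposition \ref{prop:classification-multiple-eigenvalues}, the only difference being that the irreducible factors of the characteristic polynomial are now quadratic instead of linear.

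First, I would use the Factorization Lemma (Lemma \ref{lemma:factorization}) together with the hypothesis that $M$ is diagonalizable over $\mathbb C$, which forces each eigenvalue to have multiplicity one, to write
\[
\chi_M(X) \;=\; \prod_{j=1}^{q} \chi_{\lambda_j}(X), \qquad \chi_{\lambda_j}(X) := (X-\mathrm{Re}(\lambda_j))^2 + \mathrm{Im}(\lambda_j)^2.
\]
The factors $\chi_{\lambda_j}(X)$ are pairwise coprime in $\mathbb R[X]$: a common complex root would force $\lambda_i\in\{\lambda_j,\bar\lambda_j\}$, which is excluded for $i\neq j$. By the Primary Decomposition Theorem and Cayley--Hamilton, $\mathbb R^n$ therefore splits as the $M$-invariant real direct sum
\[
\mathbb R^n \;=\; \mathcal E_{\lambda_1} \oplus \cdots \oplus \mathcal E_{\lambda_q}, \qquad \mathcal E_{\lambda_j} := \ker \chi_{\lambda_j}(M),
\]
and, as was already computed in the paragraph preceding the lemma, the real vectors $\mathrm{Re}(v_j),\mathrm{Im}(v_j)$ (where $v_j$ is a complex eigenvector of $M$ attached to $\lambda_j$) form a basis of $\mathcal E_{\lambda_j}$ in which the restriction $M_j := M|_{\mathcal E_{\lambda_j}}$ has matrix exactly $M_{\lambda_j}$.

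Next, I would apply Lemma \ref{lem:splitting} inductively on $q$. A single application to the decomposition $\mathbb R^n = \mathcal E_{\lambda_1} \oplus (\mathcal E_{\lambda_2}\oplus\cdots\oplus\mathcal E_{\lambda_q})$ peels off the first factor and yields $\mathcal G_M \cong \mathcal G_{M_1} \oplus \mathcal G_{M_1'}$, where $M_1'$ is the restriction of $M$ to the second summand. Iterating this procedure produces
\[
\mathcal G_M \;\cong\; \mathcal G_{M_1} \oplus \mathcal G_{M_2} \oplus \cdots \oplus \mathcal G_{M_q}.
\]

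Finally, I would identify each $\mathcal G_{M_j}$ with the four-dimensional Lie algebra $\mathcal G_{M_{\lambda_j}}$ described in the statement. Since $M_j$ has both minimal and characteristic polynomial equal to $\chi_{\lambda_j}(X)$, it is nonderogatory on $\mathcal E_{\lambda_j}$ and $\mathbb R[M_j] = \mathbb R\,\mathbb I_{\mathcal E_{\lambda_j}} \oplus \mathbb R M_j$. In the basis $(e_1=\mathbb I_{\mathcal E_{\lambda_j}},\; e_2=M_j,\; e_3=\mathrm{Re}(v_j),\; e_4=\mathrm{Im}(v_j))$, a direct computation from the explicit form of $M_{\lambda_j}$ reproduces exactly the brackets displayed in the lemma, so $\mathcal G_{M_j} \cong \mathcal G_{M_{\lambda_j}}$ and the claim follows. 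The only mildly delicate point is the coprimality of the quadratic factors $\chi_{\lambda_j}$, which is precisely where diagonalizability over $\mathbb C$ is used; once this is in hand, everything else is mechanical given Lemma \ref{lem:splitting}.
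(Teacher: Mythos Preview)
Your proposal is correct and follows essentially the same route as the paper: decompose $\mathbb R^n$ into the $M$-invariant planes $\mathcal E_{\lambda_j}$ (the paper invokes the Jordan form directly, you reach them via the Primary Decomposition Theorem applied to the coprime quadratic factors, which amounts to the same thing), then apply Lemma \ref{lem:splitting} to the resulting direct sum. Your write-up is in fact more explicit than the paper's about why the factors are coprime and why each $M_j$ is nonderogatory on its plane, but the underlying argument is identical.
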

\begin{proof}
 As above, consider the 2-dimensional real subspaces $\mathcal E_{\lambda_j}$. Note that, in the basis $(Re(v_j), Im(v_j)),$ $j=1,\dots,q$ of $\mathbb R^n,$ we have  
 the Jordan form of $M$ and $M=P$ diag($M_{\lambda_1},\dots,M_{\lambda_q}$) $P^{-1}$, 
where $P$ is the bloc diagonal matrix $P=diag(P_1,\dots,P_q)$ each $P_j$ being a $2\times 2$ real matrix whose first and second columns are the vectors $Re(v_j)$ and $Im(v_j)$ respectively.
The result follows by applying Lemma \ref{lem:splitting} to the direct sum  $\mathbb R^n=\mathcal E_{\lambda_1}\oplus\dots\oplus \mathcal E_{\lambda_q}$, where all the $\mathcal E_{\lambda_1}$ are stable by $M$ as explained above.
 Thus, $\mathcal G_M$ splits as a direct sum $\mathcal G_M = \mathcal  G_{{M}_{\lambda_1}}\oplus\cdots\oplus \mathcal  G_{{M}_{\lambda_q}},$  of the ideals $\mathcal  G_{{M}_{\lambda_j}}$, $j=1,\dots,q$.
 \end{proof}
Of course, each  Lie ideal $\mathcal G_{{M}_{\lambda_j}}$ is isomorphic to $\mathfrak{aff}(\mathbb C).$  This can be easily deduced from the classification of $4$-dimensional Lie algebras. For self-containedness purposes, we supply here an easy proof. Identifying $\mathcal E_{\lambda_j}$ with $\mathbb R^2$, we have
 \begin{lemma}\label{lem:isomorphism4D} Let $M_\lambda$ be a $2\times 2$ real matrix with complex eigenvalues $\lambda, \;\bar\lambda,$ where $\lambda =\lambda_R-i\lambda_I$  and $\lambda_R, \;\lambda_I$ are real numbers, with $\lambda_I\neq 0$ . Then $\mathcal G_{M_\lambda}$ is isomorphic to $\mathfrak{aff}(\mathbb C),$ as in Example \ref{ex:aff(C)}.\end{lemma}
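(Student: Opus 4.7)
The plan is to reduce the statement to a simple algebraic identity by invoking the classification principle of Lemma \ref{lemma:conjugaison-MASAs}: two $2$-step solvable Lie algebras of the form $\mathfrak{B}_1\ltimes\mathbb R^n$ and $\mathfrak{B}_2\ltimes\mathbb R^n$ are isomorphic if and only if the Abelian matrix algebras $\mathfrak{B}_1$ and $\mathfrak{B}_2$ are conjugate in $\mathcal M(n,\mathbb R)$. Here $\mathfrak{aff}(\mathbb C)=\mathfrak{D}_{0,1}^2=\mathcal G_{M_{-1}}$ with $M_{-1}=E_{2,1}-E_{1,2}$ (Example \ref{ex:aff(C)}), so the task reduces to showing that $\mathbb R[M_\lambda]$ and $\mathbb R[M_{-1}]$ are conjugate subalgebras of $\mathcal M(2,\mathbb R)$.

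First, I would treat the model form $M_\lambda=\begin{pmatrix}\lambda_R&-\lambda_I\\ \lambda_I&\lambda_R\end{pmatrix}$ that was produced in the discussion preceding the lemma. The elementary identity
\begin{eqnarray}
M_\lambda \;=\; \lambda_R\,\mathbb I_{\mathbb R^2}+\lambda_I\, M_{-1}\nonumber
\end{eqnarray}
together with the assumption $\lambda_I\neq 0$ immediately gives the equality of $2$-dimensional subspaces
\begin{eqnarray}
\mathbb R[M_\lambda]=\mathbb R\mathbb I_{\mathbb R^2}\oplus\mathbb R M_\lambda
=\mathbb R\mathbb I_{\mathbb R^2}\oplus\mathbb R M_{-1}=\mathbb R[M_{-1}],\nonumber
\end{eqnarray}
so the two Abelian algebras are not merely conjugate but literally identical. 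Applying Lemma \ref{lemma:conjugaison-MASAs} with $\phi=\mathbb I_{\mathbb R^2}$ and $x_0=0$ then yields an isomorphism $\mathcal G_{M_\lambda}\cong\mathcal G_{M_{-1}}=\mathfrak{aff}(\mathbb C)$.

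For the general statement, where $M_\lambda$ is any $2\times 2$ real matrix with complex conjugate eigenvalues $\lambda,\bar\lambda$ (and $\lambda_I\neq 0$), one invokes the standard fact that such a matrix is similar over $\mathbb R$ to its real canonical form: there exists an invertible $P\in\mathrm{GL}(2,\mathbb R)$ such that $P^{-1}M_\lambda P=\lambda_R\mathbb I_{\mathbb R^2}+\lambda_I M_{-1}$. Conjugating the previous displayed identity by $P$ gives $\mathbb R[M_\lambda]=P\,\mathbb R[M_{-1}]\,P^{-1}$, so $\mathbb R[M_\lambda]$ and $\mathbb R[M_{-1}]$ are conjugate, and Lemma \ref{lemma:conjugaison-MASAs} again delivers the desired isomorphism.

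No step here is a real obstacle: the only substantive input is the existence of the real canonical form (which is classical), and the rest is a one-line matrix computation. If one preferred a self-contained argument bypassing Lemma \ref{lemma:conjugaison-MASAs}, the explicit isomorphism $\psi:\mathcal G_{M_{-1}}\to\mathcal G_{M_\lambda}$ sending $e_1\mapsto e_1$, $e_2\mapsto \lambda_I^{-1}(e_2-\lambda_R e_1)$, $e_3\mapsto P\tilde e_1$, $e_4\mapsto P\tilde e_2$ can be checked directly against the bracket relations stated just before the lemma; this is the only point where a short computation is needed, and it follows immediately from $M_\lambda\circ P=P\circ(\lambda_R\mathbb I+\lambda_I M_{-1})$.
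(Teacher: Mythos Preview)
Your proposal is correct. The route you take is a cleaner, more conceptual variant of the paper's own argument rather than a genuinely different proof. The paper works in the model form $M_\lambda=\begin{pmatrix}\lambda_R&-\lambda_I\\ \lambda_I&\lambda_R\end{pmatrix}$, writes out the brackets of $\mathcal G_{M_\lambda}$ explicitly in the basis $(e_1',e_2',e_3',e_4')=(\mathbb I_{\mathbb R^2},M_\lambda,\tilde e_1,\tilde e_2)$, and then passes to a new basis with $X_1=e_1'$, $X_2=\lambda_I^{-1}(e_2'-\lambda_R e_1')$, $X_3=pe_3'-qe_4'$, $X_4=qe_3'+pe_4'$ in which the brackets become exactly those of $\mathfrak{aff}(\mathbb C)$. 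Your identity $M_\lambda=\lambda_R\mathbb I_{\mathbb R^2}+\lambda_I M_{-1}$ is precisely the paper's substitution $X_2=\lambda_I^{-1}(e_2'-\lambda_R e_1')$ read at the level of matrices, and your conclusion $\mathbb R[M_\lambda]=\mathbb R[M_{-1}]$ makes the explicit bracket verification unnecessary: the two semidirect products are then literally the same Lie algebra, not merely isomorphic. Invoking Lemma~\ref{lemma:conjugaison-MASAs} is actually overkill here for the model form, since equality of the Abelian factors gives equality of the Lie algebras directly; but it is the right tool for the general $2\times 2$ case after passing to real canonical form, exactly as you say. Your approach also mirrors what the paper itself does in the analogous Lemma~\ref{lem:uniquerealeigenvalue} and in Section~\ref{sect:nondiagonalizableinC}, so it is well aligned with the paper's overall methodology.
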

\begin{proof}
Without loss of generality, we consider $M_\lambda=\begin{pmatrix}\lambda_R&-\lambda_I\\
\lambda_I &\lambda_R\end{pmatrix}$ in the canonical basis $(\tilde e_1\;,\; \tilde e_2)$ of $\mathbb R^2.$ In the basis $e_1' =\mathbb I_{\mathbb R^2},\; e_2'= M_\lambda$, $e_3'=\tilde e_1,\; e_4'=\tilde e_2$, the nonzero  Lie brackets of $\mathcal G_{M_\lambda}$ are
 $[e_1',e_3'] = e_3'$, $[e_1',e_4'] = e_4'$, $[e_2',e_3'] = \lambda_Re_3'+\lambda_I e_4'$,  $[e_2',e_4'] =- \lambda_Ie_3'+\lambda_R e_4'$. Now in the new  basis $X_1:=e_1'$, $X_2:=-\frac{\lambda_R}{ \lambda_I}e_1' + \frac{1}{ \lambda_I}e_2'$, $X_3:=pe_3' -qe_4'$, $X_4:=qe_3' + pe_4'$, with $p^2+q^2\neq 0$, the Lie bracket reads
$[X_1,X_3]=X_3$, $[X_1,X_4]=X_4$, $[X_2,X_3]=X_4$, $[X_2,X_4]=-X_3$ as for that of  $\mathfrak{aff}(\mathbb C).$ In other words, the invertible linear map $\phi : \mathfrak{aff}(\mathbb C) \to  \mathcal G_{M_\lambda}$, $\phi(e_j)=X_j$, $j=1,2,3,4,$ is an isomorphism between the Lie algebras $\mathfrak{aff}(\mathbb C)$ and $  \mathcal G_{M_\lambda}$.
  \end{proof}

\begin{proposition}\label{prop:classification-multiple-complexeigenvalues}
Suppose a nonderogatory  $n\times n$ real matrix $M$ has $2q=n$  distinct complex eigenvalues $\lambda_j,\bar\lambda_j,$  $j=1\dots, q.$ Then the Lie algebra $\mathcal G_M$ is isomorphic to the direct sum $\mathfrak{aff}(\mathbb C)\oplus \cdots \oplus \mathfrak{aff}(\mathbb C)$ of  $q$ copies of the Lie algebra $\mathfrak{aff}(\mathbb C)$.
\end{proposition}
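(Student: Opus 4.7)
The plan is to assemble the proof directly from the two preceding lemmas that have already done the heavy lifting. First I would invoke Lemma \ref{lem:multiple-complex-eigenvalues}: since $M$ is nonderogatory and diagonalizable over $\mathbb C$ (as forced by the fact that all $2q=n$ complex eigenvalues are distinct and occur with multiplicity $1$), that lemma supplies the decomposition $\mathbb R^n=\mathcal E_{\lambda_1}\oplus\cdots\oplus\mathcal E_{\lambda_q}$ into $M$-invariant real $2$-dimensional subspaces, and via Lemma \ref{lem:splitting} yields the Lie algebra splitting
\begin{equation}
\mathcal G_M\;\cong\;\mathcal G_{M_{\lambda_1}}\oplus\cdots\oplus\mathcal G_{M_{\lambda_q}},
\end{equation}
where each $M_{\lambda_j}$ is the restriction of $M$ to $\mathcal E_{\lambda_j}$.

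Second, I would apply Lemma \ref{lem:isomorphism4D} to each of the $4$-dimensional ideals $\mathcal G_{M_{\lambda_j}}$. Since $M_{\lambda_j}$ is a $2\times 2$ real matrix whose eigenvalues $\lambda_j,\bar\lambda_j$ are genuinely complex ($\operatorname{Im}(\lambda_j)\neq 0$, as they are not real), the lemma guarantees that $\mathcal G_{M_{\lambda_j}}\cong\mathfrak{aff}(\mathbb C)$ for every $j=1,\dots,q$.

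Combining these two observations gives
\begin{equation}
\mathcal G_M\;\cong\;\underbrace{\mathfrak{aff}(\mathbb C)\oplus\cdots\oplus\mathfrak{aff}(\mathbb C)}_{q\text{ copies}},
\end{equation}
which is the statement of Proposition \ref{prop:classification-multiple-complexeigenvalues}. There is no genuine obstacle in this step since both the decomposition and the identification of each $4$-dimensional factor have already been established; the proposition is essentially a concatenation of Lemmas \ref{lem:multiple-complex-eigenvalues} and \ref{lem:isomorphism4D}. The only point worth emphasizing, to be safe, is that the hypothesis ``$n$ distinct complex eigenvalues'' forces $M$ to be diagonalizable over $\mathbb C$ (each eigenvalue has algebraic multiplicity one, hence equals its geometric multiplicity), so the diagonalizability assumption in Lemma \ref{lem:multiple-complex-eigenvalues} is automatically satisfied and the proof goes through without any further argument.
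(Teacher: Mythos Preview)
Your proof is correct and follows essentially the same route as the paper: invoke Lemma \ref{lem:multiple-complex-eigenvalues} to split $\mathcal G_M$ into the direct sum of the $4$-dimensional ideals $\mathcal G_{M_{\lambda_j}}$, then apply Lemma \ref{lem:isomorphism4D} to identify each factor with $\mathfrak{aff}(\mathbb C)$. Your added remark that the distinctness of the eigenvalues forces diagonalizability over $\mathbb C$ (so that the hypothesis of Lemma \ref{lem:multiple-complex-eigenvalues} is met) is a helpful clarification that the paper leaves implicit.
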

\begin{proof}Suppose $M$ is a nonderogatory  $n\times n$ real matrix  with $2q$ distinct complex (nonreal) eigenvalues $\lambda_j,\bar\lambda_j,$ $j=1\dots, q.$  From Lemma \ref{lem:multiple-complex-eigenvalues}, the Lie algebra $\mathcal G_M$ is isomorphic to the direct sum $\mathcal G_M = \mathcal  G_{{M}_{\lambda_1}}\oplus\cdots\oplus \mathcal  G_{{M}_{\lambda_q}},$  of the ideals $\mathcal  G_{{ M}_{\lambda_j}}$, $j=1,\dots,q$ and according to Lemma  \ref{lem:isomorphism4D}, each  $\mathcal  G_{{M}_{\lambda_j}}$, is isomorphic to $\mathfrak{aff}(\mathbb C)$. 
\end{proof}

\subsubsection{Example: the circular permutation of the  vectors of a basis}\label{Sect:exple-complexeigenvalues}
Here is a typical example of a (real) nonderogatory linear map with $n$ real and complex eigenvalues, hence diagonalizable in $\mathbb C$. 
Let  $\psi:\mathbb R^n\to\mathbb R^n$ be the linear map given by the circular permutation of the canonical basis $\psi(\tilde e_i) = \tilde e_{i+1}$, for $i=1,\dots,n-1$ and $\psi(\tilde e_{n})=\tilde e_1$. The vector $\tilde e_1 $ is such that $(\tilde e_1,\psi(\tilde e_1), \dots, \psi^{n-1}(\tilde e_1))$  coincides with the basis  $(\tilde e_1, \dots, \tilde e_n)$. As a matter of fact, any vector $\tilde e_i$ of this basis is such that   $(\tilde e_i,\psi(\tilde e_i), \dots, \psi^{n-1}(\tilde e_i))$ is again a basis of $\mathbb R^n$. The  map $\psi$ is non-derogatory and its matrix in the above basis reads $[\psi]=E_{1,n}+\displaystyle\sum_{i=1}^{n-1}E_{i+1,i}.$
Its characteristic polynomial is $\chi(X)=X^n-1$, up to a sign. So the eigenvalues are the complex nth roots of $1.$ They are $z_k=e^{ik\frac{2\pi}{n}}$, where $k=1,2,\dots,n.$
When $n=2,$ then it reads  $[\psi]=E_{1,2}+E_{2,1}$ and has the two distinct real eigenvalues $z_1=-1$ and $z_2=1.$  So $\mathcal G_\psi$ is isomorphic to 
$\mathfrak{aff}(\mathbb R)\oplus \mathfrak{aff}(\mathbb R).$ For $n=3,$ the eigenvalues are $z_1=1,$ $ z_2=-e^{i\frac{\pi}{3}}$, 
$ z_3=-e^{-i\frac{\pi}{3}}.$ So  $\mathcal G_\psi$ is isomorphic to $\mathfrak{aff}(\mathbb R)\oplus\mathfrak{aff}(\mathbb C)$. For $n=4,$ the eigenvalues are $z_1=-1,$ $z_2= 1,$ $z_3= i,$ $z_4= -i$, thus  $\mathcal G_\psi$ is isomorphic to $\mathfrak{aff}(\mathbb R)\oplus \mathfrak{aff}(\mathbb R)\oplus\mathfrak{aff}(\mathbb C).$  When $n=5$, there are one real and $4$ complex eigenvalues and hence we get
 $\mathcal G_\psi=\mathfrak{aff}(\mathbb R)\oplus\mathfrak{aff}(\mathbb C)\oplus\mathfrak{aff}(\mathbb C).$
For $n=7$, we have one real  and six complex, namely  $1, -e^{i\frac{\pi}{7}},  -e^{-i\frac{\pi}{7}}, e^{i\frac{2\pi}{7}}, e^{-i\frac{2\pi}{7}},  -e^{i\frac{3\pi}{7}},- e^{-i\frac{3\pi}{7}}.$
Hence we have $\mathcal G_\psi=\mathfrak{aff}(\mathbb R)\oplus\mathfrak{aff}(\mathbb C)\oplus\mathfrak{aff}(\mathbb C) \oplus\mathfrak{aff}(\mathbb C).$


\subsubsection{Real matrices non-diagonalizable in $\mathbb C$}\label{sect:nondiagonalizableinC}

In this section, we discuss the case of $n\times n$ nonderogatory real matrices $M$ all of whose eigenvalues are complex (nonreal), 
but which are not diagonalizable in $\mathbb C.$  In this case, $n$ is even.
Lemma \ref{lem:uniqueeigenvalueJordanform} is a well known result (attributed to Hirsch and Smale).
\begin{lemma}[Jordan form]\label{lem:uniqueeigenvalueJordanform}
Suppose  a nonderogatory  $n\times n$ real matrix $M$ has only two  eigenvalues which are both complex $\lambda=r+is$ and $\bar\lambda.$ If $M$ is not diagonalizable in $\mathbb C$, then there exists a basis $(\tilde e_1, \cdots, \tilde e_n)$ of $\mathbb R^n$ in which $M$ has the form
\begin{eqnarray}\label{jordancomplex}M_\lambda = \begin{pmatrix}M_{r,s}  & \mathbb I_2 & {\mathbf 0} &{\mathbf 0} &\cdots & {\mathbf 0}\\ {\mathbf 0} &M_{r,s}  &\mathbb I_2 & {\mathbf 0} &\cdots& {\mathbf 0}\\
 \vdots & \vdots  &\ddots & \ddots & \ddots& \vdots
\\
{\mathbf 0} & {\mathbf 0}&  \cdots &M_{r,s}  & \mathbb I_2 &0
\\
{\mathbf 0} & {\mathbf 0}&  \cdots &0&M_{r,s}  & \mathbb I_2 
\\
 {\mathbf 0} &{\mathbf 0}& \cdots  & {\mathbf 0} & {\mathbf 0} &M_{r,s}   \end{pmatrix},\end{eqnarray} 
where $M_{r,s}: =\begin{pmatrix} r&-s\\s&r\end{pmatrix}$,  $\mathbb I_2: =\begin{pmatrix} 1&0\\ 0&1\end{pmatrix}$  and ${\mathbf 0}: =\begin{pmatrix} 0&0\\ 0&0\end{pmatrix}$.
\end{lemma}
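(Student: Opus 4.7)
The plan is to descend from the complex Jordan form to a real Jordan form by pairing a Jordan chain for $\lambda$ with its complex conjugate chain for $\bar\lambda$. Since $M$ is nonderogatory with characteristic polynomial $\chi(X) = \bigl((X-r)^2+s^2\bigr)^{n/2} = (X-\lambda)^{n/2}(X-\bar\lambda)^{n/2}$ (Lemma \ref{lemma:factorization}) equal to its minimal polynomial, the complex Jordan form of $M$ must consist of exactly two blocks: one of size $n/2$ associated with $\lambda$ and one of size $n/2$ associated with $\bar\lambda$. That $M$ is not diagonalizable over $\mathbb{C}$ is exactly the condition $n/2 \ge 2$ (i.e.\ there really are nontrivial Jordan chains).

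First I would fix a complex Jordan chain $(v_1,\ldots,v_{n/2})$ for $\lambda$, that is, vectors in $\mathbb{C}^n$ satisfying $(M-\lambda \mathbb{I})v_1 = 0$ and $(M-\lambda \mathbb{I})v_{k+1} = v_k$ for $k = 1,\ldots, n/2 - 1$. Because $M$ has real entries, applying complex conjugation gives a Jordan chain $(\bar v_1,\ldots,\bar v_{n/2})$ for $\bar\lambda$. Since $\lambda \ne \bar\lambda$, the generalized eigenspaces $\ker(M-\lambda \mathbb{I})^{n/2}$ and $\ker(M-\bar\lambda \mathbb{I})^{n/2}$ are in direct sum, so the $n$ vectors $v_1,\ldots,v_{n/2},\bar v_1,\ldots,\bar v_{n/2}$ form a $\mathbb{C}$-basis of $\mathbb{C}^n$.

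Next I would define the real vectors $u_k := \tfrac{1}{2}(v_k + \bar v_k) = \operatorname{Re}(v_k)$ and $w_k := \tfrac{i}{2}(v_k - \bar v_k) = -\operatorname{Im}(v_k)$ for $k = 1,\ldots,n/2$, which gives the equivalent relation $v_k = u_k - i w_k$. The change-of-basis matrix from $\{v_k,\bar v_k\}$ to $\{u_k,w_k\}$ is an invertible block-diagonal matrix with $2\times 2$ blocks $\tfrac{1}{2}\bigl(\begin{smallmatrix}1&1\\ i&-i\end{smallmatrix}\bigr)$, so $(u_1,w_1,\ldots,u_{n/2},w_{n/2})$ is an $\mathbb{R}$-basis of $\mathbb{R}^n$. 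Then by taking real and imaginary parts in the identity $M v_k = \lambda v_k + v_{k-1}$ (with $v_0 = 0$), a direct computation yields
\begin{equation*}
M u_k = r u_k + s w_k + u_{k-1}, \qquad M w_k = -s u_k + r w_k + w_{k-1},
\end{equation*}
which, read in the ordered basis $(u_1,w_1,u_2,w_2,\ldots,u_{n/2},w_{n/2})$, is exactly the block bidiagonal matrix with diagonal blocks $M_{r,s}$ and superdiagonal blocks $\mathbb{I}_2$ as in (\ref{jordancomplex}).

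There is no real obstacle — the main subtlety is bookkeeping of sign conventions (e.g.\ choosing $w_k = -\operatorname{Im}(v_k)$ rather than $+\operatorname{Im}(v_k)$) so that the induced $2\times 2$ diagonal block is $M_{r,s} = \bigl(\begin{smallmatrix}r&-s\\ s&r\end{smallmatrix}\bigr)$ rather than its transpose, and ordering the basis by interleaving the $u_k$'s and $w_k$'s rather than listing all $u$'s then all $w$'s, so that the off-diagonal couplings are $\mathbb{I}_2$ blocks placed on the superdiagonal.
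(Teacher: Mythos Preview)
Your argument is correct and is the standard derivation of the real Jordan form from the complex one: pick a Jordan chain for $\lambda$, conjugate it to get one for $\bar\lambda$, and pass to real and imaginary parts with the sign convention $w_k=-\operatorname{Im}(v_k)$ so that the diagonal $2\times2$ blocks come out as $M_{r,s}$ rather than its transpose. The computation you sketch checks out: with $v_k=u_k-iw_k$ and $\lambda=r+is$, separating $Mv_k=\lambda v_k+v_{k-1}$ into real and imaginary parts gives exactly $Mu_k=ru_k+sw_k+u_{k-1}$ and $Mw_k=-su_k+rw_k+w_{k-1}$, which in the interleaved basis $(u_1,w_1,\dots,u_{n/2},w_{n/2})$ yields the block-bidiagonal matrix (\ref{jordancomplex}).

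There is nothing to compare here: the paper does not prove this lemma at all. It states it as ``a well known result (attributed to Hirsch and Smale)'' and moves on. So your proof is not a variant of the paper's approach but rather a supply of the omitted standard argument, which is fine.
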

One easily sees that the characteristic and the minimal polynomials of $M_\lambda$ are both equal to $\chi(X)=
\Big((r-X)^2+s^2\Big)^{\frac{n}{2}},$ as in the factorization lemma \ref{lemma:factorization}. In the same basis in which $M_\lambda$ is in the form (\ref{jordancomplex}), consider the nonderogatory matrix
\begin{eqnarray}\label{jordancomplexcanonical}M_{0,1} = \begin{pmatrix}\tilde M_{0,1}  & \mathbb I_2 & {\mathbf 0} &{\mathbf 0} &\cdots & {\mathbf 0}\\ {\mathbf 0} &\tilde M_{0,1}  &\mathbb I_2 & {\mathbf 0} &\cdots& {\mathbf 0}\\
 \vdots & \vdots  &\ddots & \ddots & \ddots& \vdots
\\
{\mathbf 0} & {\mathbf 0}&  \cdots &\tilde M_{0,1}  & \mathbb I_2 &{\mathbf 0}
\\
{\mathbf 0} & {\mathbf 0}&  \cdots &{\mathbf 0}&\tilde M_{0,1}  & \mathbb I_2 
\\
 {\mathbf 0} &{\mathbf 0}& \cdots  & {\mathbf 0} & {\mathbf 0} &\tilde M_{0,1}   \end{pmatrix},\end{eqnarray}
where  $\tilde M_{0,1} =\begin{pmatrix} 0&-1\\ 1&0\end{pmatrix}$ and write $M_{0,1}=M_s+M_n$, where
\begin{eqnarray}
M_s=-\displaystyle\sum_{j=0}^{\frac{n}{2}-1}(E_{2j+1,2j+2}-E_{2j+2,2j+1}), \;\; M_n=\displaystyle\sum_{j=1}^{n-2}E_{j,j+2}.
\end{eqnarray}
We note that $M_s$ and $M_n$ commute $[M_s,M_n] =0$, more precisely, we have
\begin{eqnarray}M_sM_n&=& -\displaystyle\sum_{j=0}^{\frac{n}{2}-2}\Big(E_{2j+1,2j+4} - E_{2j+2,2j+3}\Big)= M_nM_s\;,
\end{eqnarray}
and so $[ M_{0,1},M_s]=[M_n ,M_s] = [ M_n,M_{0,1}]=0$. In particular,  $M_s$ and $M_n$ are respectively the 
semisimple and the nilpotent parts of $ M_{0,1}.$ The matrices $M_s$ and $M_n$ are both polynomials in $M_{0,1}$.
 Thus, the matrix  $M_\lambda=r\mathbb I_{\mathbb R^n}+sM_s+M_n$ is also polynomial in 
 $M_{0,1}$. This induces the following  equalities  
\begin{eqnarray}
\mathbb R[ M_{0,1}] = \mathbb R[ M_{\lambda}] \text{  and } \mathcal G_{M_{0,1}} =  \mathcal G_{M_\lambda}.
\end{eqnarray}
 We have thus proved the
\begin{proposition}\label{prop:isomorphismcomplexeigenvaluesnonsemiimple}
Let $M$ be  a real nonderogatory  $n\times n$ matrix. If $M$ has just two eigenvalues which are  complex and conjugate,  and if $M$ is non-diagonalizable in $\mathbb C$, then $  \mathcal G_{M}=\mathcal G_{M_{0,1}} .$ In this case, we write $\mathfrak{D}_{0,1}^n$  instead of $ \mathcal G_{M_{0,1}}.$
\end{proposition}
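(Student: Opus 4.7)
The plan is to reduce the problem to the explicit canonical matrix $M_{0,1}$ of \eqref{jordancomplexcanonical} by conjugation, and then show that $M_\lambda$ (the Jordan form of $M$) lies in the polynomial algebra generated by $M_{0,1}$, so that the two algebras of polynomials coincide. First I would invoke Lemma \ref{lem:uniqueeigenvalueJordanform} to find an invertible $P$ with $P^{-1}MP = M_\lambda$ in the form \eqref{jordancomplex}. By Remark \ref{rmq:nonderogatory}, conjugate nonderogatory matrices yield isomorphic Lie algebras (with the isomorphism explicitly given by $a\mapsto PaP^{-1}$ on $\mathbb R[M]$ and $x\mapsto Px$ on $\mathbb R^n$), so it suffices to prove $\mathcal G_{M_\lambda}=\mathcal G_{M_{0,1}}$.

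Next I would write the two commuting pieces $M_s,M_n$ defined just after \eqref{jordancomplexcanonical} and verify by a direct block computation that $M_sM_n=M_nM_s$; this is the key algebraic identity. Since $M_s^2=-\mathbb{I}_{\mathbb R^n}$, the matrix $M_s$ is semisimple (over $\mathbb C$ it diagonalizes with eigenvalues $\pm i$), $M_n$ is clearly nilpotent ($M_n^{n/2}=0$), and the two commute, so by uniqueness of the additive Jordan decomposition, $M_s$ and $M_n$ are respectively the semisimple and nilpotent parts of $M_{0,1}=M_s+M_n$. Consequently both $M_s$ and $M_n$ belong to $\mathbb R[M_{0,1}]$. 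From the identity $M_\lambda = r\,\mathbb I_{\mathbb R^n}+sM_s+M_n$, visible by inspecting \eqref{jordancomplex} and \eqref{jordancomplexcanonical}, we deduce $M_\lambda\in\mathbb R[M_{0,1}]$ and therefore $\mathbb R[M_\lambda]\subseteq\mathbb R[M_{0,1}]$.

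To get the reverse inclusion, I would note that since $M$ (and hence $M_\lambda$) is nonderogatory, Lemma \ref{nonderogatorymatrices} gives $\dim\mathbb R[M_\lambda]=n$. The matrix $M_{0,1}$ is also nonderogatory (its characteristic and minimal polynomials both equal $(X^2+1)^{n/2}$, as computed in Example \ref{ex:D01n}), so $\dim\mathbb R[M_{0,1}]=n$ as well. Equality of dimensions upgrades the inclusion to $\mathbb R[M_\lambda]=\mathbb R[M_{0,1}]$. Since the Lie algebra $\mathcal G_N=\mathbb R[N]\ltimes\mathbb R^n$ depends on $N$ only through $\mathbb R[N]$ and its tautological action on $\mathbb R^n$, this equality of subalgebras of $\mathcal M(n,\mathbb R)$ immediately yields $\mathcal G_{M_\lambda}=\mathcal G_{M_{0,1}}$, completing the proof in conjunction with the first reduction.

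The main obstacle I anticipate is the bookkeeping for the commutation $[M_s,M_n]=0$ and the verification that $M_\lambda=r\mathbb I+sM_s+M_n$ really holds as written — both are routine block matrix checks, but one must be careful with the $2\times 2$ block convention used in \eqref{jordancomplex}, \eqref{jordancomplexcanonical} (in particular, that $M_{r,s}=r\mathbb I_2+sM_{0,-1}$ where $M_{0,-1}$ is the $2\times 2$ block of $M_s$). Once these identities are laid out, the proposition follows without further difficulty.
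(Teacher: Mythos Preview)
Your proposal is correct and follows essentially the same route as the paper: reduce via Lemma~\ref{lem:uniqueeigenvalueJordanform} to the block form $M_\lambda$, decompose $M_{0,1}=M_s+M_n$ with $[M_s,M_n]=0$, use uniqueness of the Jordan decomposition to get $M_s,M_n\in\mathbb R[M_{0,1}]$, and conclude $M_\lambda=r\mathbb I+sM_s+M_n\in\mathbb R[M_{0,1}]$ so that $\mathbb R[M_\lambda]=\mathbb R[M_{0,1}]$. Your explicit dimension count for the reverse inclusion and your invocation of Remark~\ref{rmq:nonderogatory} for the conjugation step are slightly more detailed than the paper's presentation, but the argument is the same.
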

Finally, we deduce the following.
\begin{proposition}\label{prop:all-complex-eigenvalues-nondiagonalizable}
Let $M$ be  a real nonderogatory  $n\times n$ matrix. If all the eigenvalues $\lambda_1,\bar\lambda_1,\dots,\lambda_p,\bar\lambda_p$ of $M$ are  complex, then $  \mathcal G_{M}$  is  isomorphic to the direct sum $\mathfrak{D}_{0,1}^{k_1}\oplus\dots\oplus \mathfrak{D}_{0,1}^{k_p}$ of copies of $ \mathfrak{D}_{0,1}^{k_j}$, where $k_j$ is the multiplicity of the eigenvalues $\lambda_j$, $\bar \lambda_j$.
\end{proposition}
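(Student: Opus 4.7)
The plan is to mirror the proof template used for real eigenvalues in Section~\ref{Sect:uniqueeigenvalue}, replacing the degree-one factors $(X-\lambda_j)^{k_j}$ by the degree-two real factors provided by the Factorization Lemma~\ref{lemma:factorization}, and then reducing each summand to the case already handled in Proposition~\ref{prop:isomorphismcomplexeigenvaluesnonsemiimple} (and, for multiplicity one, to Lemma~\ref{lem:isomorphism4D}). First I would apply Lemma~\ref{lemma:factorization} to obtain $\chi_M(X)=\prod_{j=1}^{p}\chi_j(X)$, with $\chi_j(X):=\bigl((X-\mathrm{Re}(\lambda_j))^2+\mathrm{Im}(\lambda_j)^2\bigr)^{k_j}$. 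The hypothesis that the pairs $(\lambda_j,\bar\lambda_j)$ are distinct immediately forces the $\chi_j$ to be pairwise coprime, since in $\mathbb C[X]$ any common divisor would have to share one of the complex roots $\lambda_i$ or $\bar\lambda_i$ with one of the other $\chi_j$, which would contradict the distinctness. The Primary Decomposition Theorem combined with Cayley--Hamilton then yields
\begin{equation*}
\mathbb R^n=\ker\chi_M(M)=\bigoplus_{j=1}^{p}\mathcal E_j,\qquad \mathcal E_j:=\ker\chi_j(M),
\end{equation*}
and each $\mathcal E_j$ is $M$-invariant because $M$ commutes with $\chi_j(M)$.

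Next I would iterate Lemma~\ref{lem:splitting} (its extension from two to $p$ pairwise complementary invariant summands is immediate) to obtain the Lie algebra decomposition $\mathcal G_M=\bigoplus_{j=1}^{p}\mathcal G_{M_j}$, where $M_j:=M|_{\mathcal E_j}$. The key remaining step is to prove the analog of Lemma~\ref{lem:nonderogatorysubmatrices} in this complex setting: each $M_j$ is a nonderogatory endomorphism of $\mathcal E_j$ whose only eigenvalues are $\lambda_j,\bar\lambda_j$, both of multiplicity $k_j$. I expect this to be the technical core of the argument, but the proof runs verbatim as in Lemma~\ref{lem:nonderogatorysubmatrices}: the characteristic polynomial of $M_j$ is $\chi_j(X)$, and if its minimal polynomial were a proper divisor $\bigl((X-\mathrm{Re}(\lambda_j))^2+\mathrm{Im}(\lambda_j)^2\bigr)^{r_j}$ with $r_j<k_j$, then the product $T(X)=\chi_1(X)\cdots\chi_{j-1}(X)\bigl((X-\mathrm{Re}(\lambda_j))^2+\mathrm{Im}(\lambda_j)^2\bigr)^{r_j}\chi_{j+1}(X)\cdots\chi_p(X)$ would be of degree strictly less than $n$ and would annihilate $M$, contradicting the fact that $M$ is nonderogatory.

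Finally I would identify each summand $\mathcal G_{M_j}$. Since $M_j$ is nonderogatory on the $(2k_j)$-dimensional real space $\mathcal E_j$ and its only eigenvalues are the complex conjugate pair $\lambda_j,\bar\lambda_j$, one of two situations occurs. If $k_j=1$, then $M_j$ has two distinct complex eigenvalues and is therefore diagonalizable in $\mathbb C$; Lemma~\ref{lem:isomorphism4D} yields $\mathcal G_{M_j}\cong\mathfrak{aff}(\mathbb C)=\mathfrak D_{0,1}^{1}$. If $k_j\ge 2$, then the nonderogatory property forces a single Jordan block per complex eigenvalue, so $M_j$ is non-diagonalizable in $\mathbb C$, and Proposition~\ref{prop:isomorphismcomplexeigenvaluesnonsemiimple} gives $\mathcal G_{M_j}\cong\mathfrak D_{0,1}^{k_j}$. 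Assembling these identifications along the direct sum $\mathcal G_M=\bigoplus_{j=1}^{p}\mathcal G_{M_j}$ yields the announced isomorphism $\mathcal G_M\cong \mathfrak D_{0,1}^{k_1}\oplus\cdots\oplus\mathfrak D_{0,1}^{k_p}$.
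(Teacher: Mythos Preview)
Your proposal is correct and follows essentially the same route as the paper's own (very terse) proof: factor $\chi_M$ via Lemma~\ref{lemma:factorization}, apply the Primary Decomposition Theorem together with Lemma~\ref{lem:splitting} to split $\mathcal G_M$ into the $\mathcal G_{M_j}$, and then invoke Proposition~\ref{prop:isomorphismcomplexeigenvaluesnonsemiimple} on each summand. The paper omits the details you supply---the coprimality argument, the nonderogatory-restriction step paralleling Lemma~\ref{lem:nonderogatorysubmatrices}, and the case split $k_j=1$ versus $k_j\ge 2$---but these are exactly the right gaps to fill and your arguments for them are sound.
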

\begin{proof}
As above, applying the Primary Decomposition Theorem to Lemma \ref{lem:splitting} and further applying Lemma \ref{lem:splitting} together with Proposition \ref{prop:isomorphismcomplexeigenvaluesnonsemiimple}, yield the result.  
\end{proof}

\subsubsection{More on the Lie algebra $\mathfrak{D}_{0,1}^n$}
Without loss of generality (from Proposition \ref{prop:isomorphismcomplexeigenvaluesnonsemiimple}, Lie algebras of the form $\mathcal G_{M_\lambda}$ are all isomorphic, anyway), we suppose that the form (\ref{jordancomplexcanonical})   is in the canonical basis $(\tilde e_1,\dots,\tilde e_n)$ of $\mathbb R^n.$ Note that $\bar x =\tilde e_{n}$ is such that $(\bar x, M_{0,1} \bar x, \dots, M_{0,1}^{n-2} \bar x, M_{0,1}^{n-1}\bar x)$ is a basis of $\mathbb R^n.$ 
The matrices $M_s,$ $M_n$ satisfy $M_s^2=-\mathbb I_{\mathbb R^n}$ , $M_n^{\frac{n}{2}}=0$, $(M_s M_n)^{\frac{n}{2}}=0$ and for any $j=1,\dots, \frac{n}{2}-1,$
\begin{eqnarray}  \Big(M_n\Big)^{j}=\displaystyle\sum_{p=1}^{n-2j} E_{p,p+2j} \text{ and }
 M_s  \Big(M_n\Big)^{j}=-\displaystyle\sum_{p=0}^{\frac{n}{2}-j-1}\Big( E_{2p+1,2p+2j+2}-  E_{2p+2,2p+2j+1}\Big)
 .\nonumber\end{eqnarray} 
Note that each $ M_{0,1}^{p}  = (M_s+M_n)^p=\displaystyle\sum_{j=0}^p \complement_p^j \; (M_s)^j\; (M_n)^{p-j} $ , $p=0,1,2\dots,n-1$, is a linear combination of $\mathbb I_{\mathbb R^n}$, $M_s$, $(M_n)^{j},$ $M_s  (M_n)^{j}$, $j=1,\dots, \frac{n}{2}-1,$  where as above, $\complement_p^j=\frac{p!}{j!(p-j)!}$, $p\ge j$.  Thus $\Big(\mathbb I_{\mathbb R^n}, M_s, (M_n)^{j}, M_s  (M_n)^{j}, \; j=1,\dots, \frac{n}{2}-1
\Big)$ is another basis of the vector space underlying  $\mathbb R[M_{0,1}]$. 
 The codimension $2$ subspace  
$\mathcal N:=span\Big((M_n)^{j}, M_s  (M_n)^{j}, \; j=1,\dots, \frac{n}{2}-1
\Big)\ltimes \mathbb R^n$  is an ideal of $\mathfrak{D}_{0,1}^n$, for it  contains the derived ideal $\mathbb R^n =[\mathfrak{D}_{0,1}^n,\mathfrak{D}_{0,1}^n]$. The equalities $(M_n)^{\frac{n}{2}} = (M_s  M_n)^{\frac{n}{2}} =0$,  show that $\mathcal N$ is $\frac{n}{2}$-step nilpotent. We then deduce that  $\mathcal N$ is the nilradical of $\mathfrak{D}_{0,1}^n$.  Indeed, if we denote by $\mathfrak{F}:=\mathbb R \mathbb I_{\mathbb R^n} \oplus \mathbb R M_{0,1}$, the plane spanned by $ (M_{0,1})^0= \mathbb I_{\mathbb R^n}$ and $ M_{0,1}$, then the vector space underlying  $\mathfrak{D}_{0,1}^n$ splits as the direct sum $\mathfrak{D}_{0,1}^n = \mathfrak{F}\oplus \mathcal N$, so that, any subspace of dimension higher than $n-2$, must meet  $\mathfrak{F}$ non-trivially and hence must not be a nilpotent subalgebra of  $\mathfrak{D}_{0,1}^n$. Thus $\mathcal N$ is the biggest nilpotent ideal of $\mathfrak{D}_{0,1}^n$.
Altogether, $\mathfrak{D}_{0,1}^n$ is a nondecomposable, non-completely solvable (the adjoint of $M_s$ has complex eigenvalues) $2$-step solvable Frobenius Lie algebra with a codimension 2 non-Abelian nilradical $\mathcal N$.
One sees that $\mathfrak{D}_{0,1}^n$ is not isomorphic to any of the Lie algebras of the form $\mathcal G_M$ discussed so far. Indeed $\mathfrak{D}_0^n$ is completely solvable and has a codimension $1$ non-Abelian nilradical (except for $\mathfrak{aff}(\mathbb R)$ whose nilradical is Abelian). As for $\mathfrak{aff}(\mathbb C)$, it has an Abelian nilradical.

\subsection{Derivations and automorphisms of $\mathcal G_M$}
Let $M$ be a nonderogatory  $n\times n$ matrix with coefficients in a field $\mathbb K.$ The following describes the derivations of $\mathcal G_M.$
\begin{proposition}\label{prop:derivations}
Let $\mathfrak{N}$ stand for the normalizer of  $\mathbb K[M]$ in  $\mathcal M(n,\mathbb K),$ that is 
\begin{eqnarray}
\mathfrak{N} :=\{N\in\mathcal M(n,\mathbb K), \text{ such that } [ N,\mathbb K[M]]\subset \mathbb K[M]\},\end{eqnarray}
then the Lie algebra of derivations of   $\mathcal G_M$  is the semi-direct sum of $\mathfrak{N}$ and $\mathbb K^n$,
\begin{eqnarray}
Der(\mathcal G_M)=\mathfrak{N} \ltimes  \mathbb K^n.\end{eqnarray}
More precisely,  $D$ is a  derivation of $\mathcal G_M$ if and only if there exist a vector $x_D\in\mathbb K^n$ and a linear map $h:\mathbb K^n\to \mathbb K^n$ satisfying the condition $[h,M^j]\in \mathbb K[M],$  $ j=1,\dots,n-1,$ such that $D$
 is of the form
$D(a+x) =[h,a]+ ax_D+h(x)$
for every $a\in\mathbb K[M]$ and $x\in\mathbb K^n.$ \end{proposition}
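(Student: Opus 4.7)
The plan is to decompose any derivation $D$ of $\mathcal G_M$ along the direct sum $\mathcal G_M = \mathbb K[M] \oplus \mathbb K^n$, to extract the form of the four resulting components from the derivation identity applied to the three nontrivial types of bracket in $\mathcal G_M$, and then to identify the resulting Lie algebra structure on $Der(\mathcal G_M)$ as the semi-direct sum $\mathfrak N \ltimes \mathbb K^n$.

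Concretely, for a derivation $D$, I would write $D(a) = D_1(a) + D_2(a)$ with $D_1(a) \in \mathbb K[M]$ and $D_2(a) \in \mathbb K^n$ for $a \in \mathbb K[M]$, and $D(x) = D_3(x) + D_4(x)$ with $D_3(x) \in \mathbb K[M]$ and $D_4(x) \in \mathbb K^n$ for $x \in \mathbb K^n$. Applying $D$ to the bracket $[a,b]=0$ with $a,b \in \mathbb K[M]$ and using that $\mathbb K[M]$ is Abelian yields $a\,D_2(b) = b\,D_2(a)$; taking $a = \mathbb I_{\mathbb K^n}$ and setting $x_D := D_2(\mathbb I_{\mathbb K^n})$ then gives $D_2(b) = b\,x_D$ for every $b \in \mathbb K[M]$. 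Next, applying $D$ to $[a,x]=ax$ and separating the $\mathbb K[M]$ and $\mathbb K^n$ components gives $D_3(ax) = 0$ and $D_4(ax) = D_1(a)\,x + a\,D_4(x)$. Specializing $a = \mathbb I_{\mathbb K^n}$ in the first forces $D_3 \equiv 0$, which in turn makes the derivation identity on $[x,y]=0$ automatic. Setting $h := D_4$, the second identity rewrites as $D_1(a) = h\circ a - a\circ h = [h,a]$ viewed as an equation of linear maps on $\mathbb K^n$.

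The constraint that $D_1$ take values in $\mathbb K[M]$ is thus equivalent to $[h,a] \in \mathbb K[M]$ for every $a \in \mathbb K[M]$, i.e.\ to $h \in \mathfrak N$. Since $\mathbb K[M]$ is spanned by $\mathbb I_{\mathbb K^n}, M, \ldots, M^{n-1}$ (by Lemma~\ref{nonderogatorymatrices}), this is in turn equivalent to the finite list of conditions $[h, M^j] \in \mathbb K[M]$ for $j = 1, \ldots, n-1$. Conversely, given any pair $(h, x_D) \in \mathfrak N \times \mathbb K^n$, the linear map $D_{h, x_D}(a+x) := [h,a] + a\,x_D + h(x)$ is readily checked to obey the derivation identity on each of the three brackets $[a,b]$, $[a,x]$, $[x,y]$, so it does belong to $Der(\mathcal G_M)$; this establishes the claimed vector-space description.

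To identify the Lie algebra structure, a direct computation of $[D_{h, x_D}, D_{h', x_D'}]$ -- expanded on $a \in \mathbb K[M]$ and on $x \in \mathbb K^n$ separately, using $h(ax) = a\,h(x) + [h,a]\,x$ to rearrange -- yields $[D_{h,x_D}, D_{h', x_D'}] = D_{[h,h'],\, h(x_D') - h'(x_D)}$, which is exactly the bracket on $\mathfrak N \ltimes \mathbb K^n$ with $\mathfrak N \subset \mathcal M(n,\mathbb K)$ acting on $\mathbb K^n$ by the canonical action of matrices on vectors. The closure $[h,h'] \in \mathfrak N$ of $\mathfrak N$ under the commutator is immediate from the Jacobi identity applied to $(h, h', a)$ with $a \in \mathbb K[M]$. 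The only real obstacle is the bookkeeping of the four components of $D$; once that decomposition is set up, the whole argument reduces to the single identity $D_1 = [h, \cdot\,]$.
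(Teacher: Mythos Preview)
Your proof is correct and follows essentially the same approach as the paper: decompose $D$ along $\mathbb K[M]\oplus\mathbb K^n$ and use the brackets $[a,b]=0$ and $[a,x]=ax$ (specialized at $a=\mathbb I_{\mathbb K^n}$) to force $D_2(a)=ax_D$, $D_3\equiv 0$, and $D_1(a)=[h,a]$. Your write-up is in fact more complete than the paper's, which invokes the invariance of the derived ideal to get $D_3\equiv 0$ immediately and does not spell out the converse verification or the commutator computation justifying the semi-direct sum structure.
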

\begin{proof}
Let D be a derivation of $\mathcal G_M.$
As the derived ideal $\mathbb K^n=[\mathcal G_M,\mathcal G_M]$ of $\mathcal G_M$ is preserved
by $D$, we can write $D(x)=h(x)$ and
$D(a)=D_{1,1}(a)+D_{1,2}(a),$  for every $a\in\mathbb K[M]$, ~ $x\in
\mathbb K^n,$ where $D_{1,1}:   \mathbb K[M]\to  \mathbb K[M]$, $D_{1,2}: \mathbb K[M]\to \mathbb K^n$, $h:\mathbb K^n\to\mathbb K^n$ are some linear maps.
Now, setting  $e_1:=\mathbb I_{\mathbb K^n}$ and  $x_D:=D_{12}(e_{1})$, the equality $D_{12}(a) =ax_D,$ for any $a\in \mathbb K[M]$, follows:
\begin{eqnarray}
0=D([e_{1}, a])=[D(e_{1}),a]+ [e_{1},D(a)] =
 [x_D,a] + [e_{1},D_{12}(a)]
=D_{12}(a)-ax_D\;. \nonumber
\end{eqnarray}
We also have
$h(ax) =  D[a,x]=
[D_{11}(a),x]+[a,h(x)] =D_{11}(a)x+ah(x)$, 
thus entailing $[h,a] = D_{11}(a) \in
\mathbb K[M],$ for any $a\in\mathbb K[M].$
So the equality $ D_{11}(a): = [h,a]$ stands as the definition of $D_{11} (a)$ and this
means that  the commutator $[h,a]$ of the linear map $ h$ and any 
$a$ must be a polynomial in $M.$ 
In conclusion, every derivation of $\mathcal G_M$ is given by a vector $x_D$  and a linear 
map $h: \mathbb K^n\to \mathbb K^n$ satisfying the condition $[ h,\mathbb K[M]]\subset \mathbb K[M].$
Let $\mathfrak{N}$ stand for the normalizer of  $\mathbb K[M]$ in  $\mathcal M(n,\mathbb K),$ that is 
\begin{eqnarray}
\mathfrak{N} :=\{N\in\mathcal M(n,\mathbb R), \text{ such that } [ N,\mathbb K[M]]\subset \mathbb K[M]\},\nonumber\end{eqnarray}
then the above means
$Der(\mathcal G_M)=\mathfrak{N} \ltimes  \mathbb K^n.$
\end{proof}
 Note that in Proposition \ref{prop:derivations}, the inner derivations of $\mathcal G_M$ are those for which the component $h$ is itself  a polynomial in $M$.
\subsubsection{Example : derivations of $\mathfrak{D}_0^n$}\label{ex:derivationsofD0n}
For $n=2,$
the normalizer of  $\mathbb R[E_{1,2}]$ in $\mathfrak{gl}(2,\mathbb R),$ is the 3-dimensional algebra  of matrices spanned by $E_{1,1},E_{1,2},E_{22}$, which is $1$ dimension higher than   $\mathbb R[E_{1,2}]$. For example, $E_{2,2}$ is not in  $\mathbb R[E_{1,2}]$  and will thus act as an outer derivation on $\mathfrak{D}_0^2.$ Considering elements of $\mathbb R^2$ as inner derivations, the space of derivations of 
$\mathfrak{D}_0^2,$ is $5$-dimensional and is spanned by $E_{1,1},E_{1,2},E_{22}, \tilde e_1,\tilde e_2.$ 
For $n=3,$
the normalizer of  $\mathbb R[E_{1,2}+E_{2,3}]$ in $\mathfrak{gl}(3,\mathbb R),$ is the $5$-dimensional algebra  of matrices spanned by $E_{1,1}-E_{3,3},$ $E_{22}+2E_{33}$, $E_{1,2}$, $E_{2,3}$, $E_{1,3}$.  For example,  $E_{1,1}-E_{3,3},$ $E_{22}+2E_{33}$, are not in  $\mathbb R[E_{1,2}+E_{2,3}]$, so they represent outer derivations of $\mathfrak{D}_0^3.$ Counting elements of $\mathbb R^3$ in as inner derivations, the space of derivations of 
$\mathfrak{D}_0^3,$ is $8$-dimensional.
More generally, for a given $n\ge 4$, if as above, we set $M_0:=E_{1,2}+E_{2,3}+\dots+E_{n-1,n}$, then the normalizer of $\mathbb R[M_0]$ in $\mathfrak{gl}(n,\mathbb R),$  is of dimension $2n-1$ and is spanned by 
$D_k:=E_{1,k}-\displaystyle\sum_{j=3}^{n-k+1}(j-2)E_{j,j+k-1}$, $D_{k}':=E_{2,k+1}+ \displaystyle\sum_{j=3}^{n-k+1}(j-1)E_{j,j+k-1}$, $k=1,\dots,n-2$ and $D_{n-1}:=E_{1,n-1}$, $D_{n-1}':=E_{2,n}$, $D_n:=E_{1,n}.$ For example, $D_1$ and $D_1'$ are not in $\mathbb R[M_0]$ and will act as non trivial outer derivations of $\mathfrak{D}_0^n$.  In particular $\mathbb R[M_{0}]$ is not a Cartan subalgebra of $\mathfrak{gl}(n,\mathbb R)$.
Taking into account elements of  $\mathbb R^n$ as inner derivations, the space of derivations of $\mathfrak{D}_0^n$ is thus of dimension $3n-1.$

\subsubsection{Example : on derivations of $\mathfrak{D}_{0,1}^n$}\label{ex:derivationsofD01n}
For $n=4$, consider the $n\times n$ matrix $M_{0,1}=E_{2,1} -E_{1,2}+E_{4,3}-E_{3,4}+E_{1,3} +E_{2,4} $ as in Example \ref{ex:D01n}. The normalizer of $\mathbb R[M_{0,1}]$ in $\mathfrak{gl}(4,\mathbb R),$ is of dimension $6$  and is spanned by the matrices  $E_{1,1}+E_{2,2},$ $E_{3,3}+E_{4,4},$ $E_{1,2}-E_{2,1},$ $E_{3,4}-E_{4,3},$  $E_{1,3}+E_{2,4},$ $E_{2,3}-E_{1,4}$. In particular the two matrices $E_{1,1}+E_{2,2},$ $E_{3,3}+E_{4,4},$ are not elements of $\mathbb R[M_{0,1}]$. So they both represent non-trivial outer derivations of $\mathfrak{D}_{0,1}^4$. More generally, for any $n\ge 4$, if we let again $M_{0,1}$ stand for the $n\times n$ matrix $M_{0,1}$, the normalizer of $\mathbb R[M_{0,1}]$ in $\mathfrak{gl}(n,\mathbb R),$ contains $\mathbb R[M_{0,1}]$ properly.  For example for $n\ge 6$, the $n\times n$ matrix   $
Z_1:=E_{1,1}+E_{2,2}-\displaystyle\sum_{j=2}^{\frac{n}{2}-1}(j-1)(E_{2j+1,2j+1}+E_{2j+2,2j+2})
$
is in the normalizer of $\mathbb R[M_{0,1}]$ in $\mathfrak{gl}(n,\mathbb R)$, but not in $\mathbb R[M_{0,1}]$.

 Indeed, for any $s,$ we have $  [Z_1, \Big(M_n\Big)^{s}] 
= s\displaystyle\displaystyle\sum_{j=1}^{n-2s}E_{j,j+2s} = s \Big(M_n\Big)^{s}$, $[Z_1, M_s] =0$ and  $[Z_1, M_s\Big(M_n\Big)^{s}] = M_s[Z_1, \Big(M_n\Big)^{s}] + [Z_1, M_s] \Big(M_n\Big)^{s}=  sM_s\Big(M_n\Big)^{s}$ . 

So the linear map $M\mapsto [Z_1,M]$ preserves $\mathbb R[M_{0,1}]$ and has the following diagonal matrix diag($0,0,1,2,\dots,\frac{n}{2}-1, 1,2,\dots,\frac{n}{2}-1$) in the following new basis
 $(\mathbb I_{\mathbb R^n}, M_s, M_n,(M_n)^2,\dots,(M_n)^{\frac{n}{2}-1}, M_s M_n, M_s(M_n)^2,\dots,M_s(M_n)^{\frac{n}{2}-1})$ and  $Z_1$ is not an element of  $\mathbb R[M_{0,1}]$, given that $\mathbb R[M_{0,1}]$ is Abelian. In particular $\mathbb R[M_{0,1}]$ is not a Cartan subalgebra of $\mathfrak{gl}(n,\mathbb R)$. Also, $Z_1$ will act as an outer derivation of $\mathfrak{D}_{0,1}^n$.

We have the following.
\begin{theorem}\label{prop:abeliannilrad}Let $M$ be an $n\times n$ nonderogatory real matrix. The following are equivalent. 

(1) Every derivation of $\mathcal G_M$ is an inner derivation.

(2) $\mathbb R[M]$ is a Cartan subalgebra of $\mathcal Gl(n,\mathbb R)$.

(3)  The matrix $M$ has $n$ distinct  (real or  complex) eigenvalues. 

(4) The nilradical of $\mathcal G_M$ is Abelian.

(5)  $\mathcal G_M$ is the direct sum $\mathcal G_M=\mathfrak{aff}(\mathbb R)\oplus\dots\oplus \mathfrak{aff}(\mathbb R)\oplus \mathfrak{aff}(\mathbb C)\oplus\dots\oplus \mathfrak{aff}(\mathbb C)$  

of only copies of $\mathfrak{aff}(\mathbb R)$ and $\mathfrak{aff}(\mathbb C)$.
\end{theorem}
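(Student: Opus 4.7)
The plan is to establish the five equivalences via four bridges: $(1)\Leftrightarrow(2)$, $(3)\Leftrightarrow(5)$, $(5)\Leftrightarrow(4)$, and $(2)\Leftrightarrow(3)$. The first three collapse to bookkeeping on top of what the paper has already built; the last is the main step.

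For $(1)\Leftrightarrow(2)$ I would use Proposition \ref{prop:derivations}: $\mathrm{Der}(\mathcal G_M)=\mathfrak N\ltimes \mathbb R^n$ with $\mathfrak N$ the normalizer of $\mathbb R[M]$ in $\mathcal M(n,\mathbb R)$. The inner derivations form the subspace $\mathbb R[M]\ltimes \mathbb R^n$, since $\mathrm{ad}(a_0+x_0)$ with $a_0\in\mathbb R[M]$ and $x_0\in\mathbb R^n$ corresponds in the $(h,x_D)$ parametrization to $h=a_0$ (because $\mathbb R[M]$ is abelian, so $[a_0,\cdot]$ vanishes on $\mathbb R[M]$ and acts as left matrix multiplication on $\mathbb R^n$) and $x_D=-x_0$. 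Hence $\mathrm{Der}=\mathrm{Inn}$ iff $\mathfrak N=\mathbb R[M]$, and since $\mathbb R[M]$ is abelian (hence nilpotent), this last equality is exactly the Cartan condition for $\mathbb R[M]$ in $\mathfrak{gl}(n,\mathbb R)$. For $(3)\Leftrightarrow(5)\Leftrightarrow(4)$ I would unpack Theorem \ref{thm:classification}: writing $\mathcal G_M\cong \bigoplus_j \mathfrak D_0^{k_j}\oplus \bigoplus_\ell \mathfrak D_{0,1}^{m_\ell}$ and using the identifications $\mathfrak D_0^1=\mathfrak{aff}(\mathbb R)$ and $\mathfrak D_{0,1}^1=\mathfrak{aff}(\mathbb C)$, (5) is equivalent to every $k_j$ and $m_\ell$ being $1$, i.e., to (3). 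For $(4)\Leftrightarrow(5)$ the nilradical of a direct sum of Lie algebras is the direct sum of nilradicals, so it suffices to inspect each indecomposable block: $\mathfrak{aff}(\mathbb R)$ and $\mathfrak{aff}(\mathbb C)$ have abelian nilradicals $\mathbb R$ and $\mathbb R^2$, whereas for $\mathfrak D_0^k$ with $k\ge 2$ the brackets (\ref{LieBracketsD0n}) give $[e_2,e_{k+1}]=e_{k+2}\ne 0$ in the codimension-$1$ nilradical, and for $\mathfrak D_{0,1}^k$ with $k\ge 2$ the paragraph following Proposition \ref{prop:all-complex-eigenvalues-nondiagonalizable} exhibits the codimension-$2$ nilradical as non-abelian.

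The main work is $(2)\Leftrightarrow(3)$. For $(3)\Rightarrow(2)$ I would complexify: if $M$ has $n$ distinct eigenvalues, then $\mathbb C[M]=\mathbb R[M]\otimes_{\mathbb R}\mathbb C$, in a $\mathbb C$-eigenbasis, is the full diagonal subalgebra of $\mathfrak{gl}(n,\mathbb C)$, and the elementary computation $([N,D])_{ij}=(d_j-d_i)N_{ij}$ with pairwise distinct $d_i$ forces any normalizing $N$ to be diagonal, whence $\mathbb C[M]$ is self-normalizing in $\mathfrak{gl}(n,\mathbb C)$. Since $[N,\mathbb R[M]]\subset \mathbb R[M]$ extends $\mathbb C$-linearly to $[N,\mathbb C[M]]\subset \mathbb C[M]$, any real $N\in\mathfrak N$ lies in $\mathbb C[M]\cap \mathcal M(n,\mathbb R)=\mathbb R[M]$, giving (2). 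For $(2)\Rightarrow(3)$ I would invoke the standard fact that every element of a Cartan subalgebra of a reductive Lie algebra is semisimple, forcing $M\in\mathbb R[M]$ to be diagonalizable over $\mathbb C$; combined with the nonderogatory hypothesis, all eigenvalues are then simple, i.e., (3). The main obstacle is precisely this direction, since it rests on that nontrivial reductive-algebra fact; a self-contained alternative in the spirit of the paper is the contrapositive via Theorem \ref{thm:classification}, where a repeated eigenvalue produces a summand $\mathfrak D_0^k$ ($k\ge 2$) or $\mathfrak D_{0,1}^k$ ($k\ge 2$), and Examples \ref{ex:derivationsofD0n} and \ref{ex:derivationsofD01n} explicitly exhibit elements of $\mathfrak N\setminus \mathbb R[M]$ in those cases, violating (2).
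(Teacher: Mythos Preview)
Your proof is correct and tracks the paper for $(1)\Leftrightarrow(2)$, $(3)\Leftrightarrow(5)$, and $(4)\Leftrightarrow(5)$. The one genuine difference is how the cycle is closed. The paper proves $(3)\Rightarrow(1)$ by arguing that, since $\mathcal G_M$ is centerless, every derivation preserves each direct summand, and then checks by hand that $\mathfrak{aff}(\mathbb R)$ and $\mathfrak{aff}(\mathbb C)$ have only inner derivations (the latter via the explicit $2\times 2$ computation with $M_{-1}$). You instead prove $(3)\Rightarrow(2)$ by complexifying and computing the normalizer of the diagonal subalgebra: this is shorter and sidesteps the question of why the summands are characteristic. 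For the reverse direction, your contrapositive alternative for $(2)\Rightarrow(3)$ is, via the already-established $(1)\Leftrightarrow(2)$, exactly the paper's argument for $(1)\Rightarrow(5)$; your first option (semisimplicity of elements of a Cartan subalgebra of a reductive algebra) is correct but external, as you note.

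One small point worth a sentence: when you invoke Examples~\ref{ex:derivationsofD0n} and~\ref{ex:derivationsofD01n} in the contrapositive, the exhibited matrices normalize $\mathbb R[M_0]$ (resp.\ $\mathbb R[M_{0,1}]$) inside the \emph{block} $\mathfrak{gl}(k,\mathbb R)$. To conclude $\mathfrak N\supsetneq\mathbb R[M]$ for the full $M$, conjugate $M$ to block-diagonal Jordan form and extend the block matrix by zero on the other blocks; this still normalizes $\mathbb R[M]=\bigoplus_j\mathbb R[M_j]$ and is not in it. The paper is equally brief on this passage, so this is not a gap peculiar to your write-up.
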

\begin{proof}The equivalence between (1) and (2) is a direct consequence of Proposition \ref{prop:derivations}. Indeed, every derivation of $\mathcal G_M$ is inner, if and only if the normalizer $\mathfrak N$ of $\mathbb R[M]$ in  $\mathcal M(n,\mathbb R),$  coincides with $\mathbb R[M]$. 
The equivalence between (3) and (4) and (5) is shown as follows. 
From Theorem \ref{thm:classification}, the matrix $M$ has $n$ distincts eigenvalues, 
say $p$ distinct real eigenvalues and 2q distinct complex eigenvalues, with $p+2q=n$, if and only if $\mathcal G_M$ is isomorphic to the direct sum $\mathcal G_M=\mathfrak{aff}(\mathbb R)\oplus\dots\oplus \mathfrak{aff}(\mathbb R)\oplus \mathfrak{aff}(\mathbb C)\oplus\dots\oplus \mathfrak{aff}(\mathbb C)$  of $p$ copies of $\mathfrak{aff}(\mathbb R)$ and $q$ copies of $\mathfrak{aff}(\mathbb C)$.Thus (3) and (5) are equivalent.
Let us remind here that, by a direct sum, we mean that  each copy of either  $\mathfrak{aff}(\mathbb R)$ or $\mathfrak{aff}(\mathbb C)$, is an ideal of the Lie algebra $\mathcal G_M$. But of course, as  both $\mathfrak{aff}(\mathbb R)$ and  $\mathfrak{aff}(\mathbb C)$ have Abelian nilradical, then so does $\mathcal G_M.$ So (3) implies (4). Conversely, from the classification Theorem \ref{thm:classification}, the only Lie algebras of the form $\mathcal G_M$ that have an Abelian nilradical, with $M$ a nonderogatory matrix, are $\mathfrak{aff}(\mathbb R)$,  $\mathfrak{aff}(\mathbb C)$ and all Lie algebras made of direct sums of copies of them. Hence (4) implies (5) and thus (3) and (4) are equivelent, too.
Let us prove that (3) implies (1). As $\mathcal G_M$ has no center, each copy of either  $\mathfrak{aff}(\mathbb R)$ or $\mathfrak{aff}(\mathbb C)$, is preserved by every derivation of $\mathcal G_M$. So a derivation of $\mathcal G_M$ is an inner derivation if and only if its restriction to each copy of either  $\mathfrak{aff}(\mathbb R)$ or $\mathfrak{aff}(\mathbb C)$ is inner.
From \cite{diatta-manga}, for any $m\ge 1,$  the Lie algebra  $\mathfrak{aff}(m,\mathbb R)$ only has inner derivations.
On the other hand, as above, the $2\times 2$ nonderogatory matrix $M_{-1}=E_{2,1}-E_{1,2}$ satisfies $\mathcal G_{M_{-1}} = \mathfrak{aff}(\mathbb C) $. For a $2\times 2$ matrix $M$ with coefficients $m_{i,j}$ , the matrix $[M,M_{-1}] =(m_{2,2}-m_{1,1})(E_{1,2}+E_{2,1}) +(m_{1,2}+m_{2,1}) (E_{1,1}-E_{2,2})$ is in $\mathbb R [M_{-1}]$ if and only if $m_{1,2}=- m_{2,1}$ and  $m_{1,1}= m_{2,2}$, that is, if and only if $M$ itself lies in  $\mathbb R [M_{-1}]$. Hence all derivations of $ \mathfrak{aff}(\mathbb C)$ are inner derivations. In order to complete the proof, we need to show that (1) implies (5). First, recall that $\mathcal D_0^n$  and $\mathcal D_{0,1}^n$  both possess outer derivations, for any $n\ge 2,$ as seen in Examples \ref{ex:derivationsofD0n} and \ref{ex:derivationsofD01n}. So if  $M$ is a real nonderogatory matrix, in order for the Lie algebra $\mathcal G_M$ to only have inner derivations, it must not be isomorphic to $\mathcal D_0^n$ or  $\mathcal D_{0,1}^n$ if it is indecomposable and in the case where it is decomposable, it must not contain a copy of  $\mathcal D_0^p$ or $\mathcal D_{0,1}^p$, for any $p\ge 2$, as a component of its decomposition into a direct sum of ideals. Hence the matrix $M$ has $n$ distinct eigenvalues. Thus (1) implies (5).
\end{proof}
Note that Proposition \ref{prop:abeliannilrad} is in agreement with the classification of Cartan subalgebras of $\mathfrak{gl}(n,\mathbb R)$ supplied by Kostant \cite{kostant} and Sugiura \cite{sugiura}.

\begin{proposition}\label{automorphisms} Let $M$ be a nonderogatory real $n\times n$ matrix. Any Lie algebra automorphism of $\mathcal G_M$ is of the form
\begin{eqnarray}
\psi (a,x) =  (\phi\circ a \circ \phi^{-1}, \phi\circ a \circ \phi^{-1}(x_0) +\phi(x)  )\end{eqnarray}
 for any $(a,x)\in \mathbb R[M]\ltimes \mathbb R^n =:\mathcal G_M$,
for some $x_0\in \mathbb R^n$ and some invertible linear map
 $\phi :\; \mathbb R^n\to\mathbb R^n$ such that $\phi\circ M \circ \phi^{-1}\in\mathbb R[M].$ 
The inverse of $\psi$ is given by
\begin{eqnarray}
\psi^{-1}(a,x)  = \Big(\phi^{-1}\circ a\circ\phi \; , \; \phi^{-1}(x - ax_0) \Big).\end{eqnarray}
\end{proposition}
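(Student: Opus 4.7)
The plan is to derive this as a direct specialization of Lemma \ref{lemma:conjugaison-MASAs} to the case $\mathcal G_1=\mathcal G_2=\mathcal G_M$, i.e.\ $\mathfrak B_1=\mathfrak B_2=\mathbb R[M]$. That earlier lemma already classified every Lie-algebra isomorphism between two 2-step solvable Lie algebras of the form $\mathfrak B_i\ltimes\mathbb K^n$: such an isomorphism is built from an invertible linear map $\phi:\mathbb K^n\to\mathbb K^n$ conjugating $\mathfrak B_1$ onto $\mathfrak B_2$, together with a translation vector $x_0\in\mathbb K^n$. Applying this verbatim with $\mathbb K=\mathbb R$ and $\mathfrak B_1=\mathfrak B_2=\mathbb R[M]$ immediately yields the claimed formula, provided we show the condition $\phi\,\mathbb R[M]\,\phi^{-1}=\mathbb R[M]$ reduces to $\phi\circ M\circ\phi^{-1}\in\mathbb R[M]$.

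To carry out that reduction, I would argue as follows. One direction is trivial, since $M\in\mathbb R[M]$. For the converse, note that $\phi\,\mathbb R[M]\,\phi^{-1}=\mathbb R[\phi M\phi^{-1}]$, and that $\phi M\phi^{-1}$, being conjugate to $M$, shares the characteristic and minimal polynomials of $M$; hence $\phi M\phi^{-1}$ is again nonderogatory and, by Lemma \ref{nonderogatorymatrices}, $\mathbb R[\phi M\phi^{-1}]$ has dimension $n$. If $\phi M\phi^{-1}\in\mathbb R[M]$, then $\mathbb R[\phi M\phi^{-1}]\subset\mathbb R[M]$, and a dimension count forces equality. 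This step is really where the hypothesis of nonderogatoricity on $M$ is used.

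For the inverse, I would just solve $\psi(b,y)=(a,x)$ directly. The first component reads $\phi\circ b\circ\phi^{-1}=a$, giving $b=\phi^{-1}\circ a\circ\phi$; plugging this into the second component, $\phi\circ b\circ\phi^{-1}(x_0)+\phi(y)=x$ becomes $a(x_0)+\phi(y)=x$, and hence $y=\phi^{-1}(x-a x_0)$. This reproduces the stated formula for $\psi^{-1}$ and, as a sanity check, one can verify that the resulting map is still of the same form (with data $\phi^{-1}$ and $-\phi^{-1}(x_0)$, up to a sign adjustment by $a$).

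The only nontrivial point is the equivalence $\phi\,\mathbb R[M]\,\phi^{-1}=\mathbb R[M]\iff\phi M\phi^{-1}\in\mathbb R[M]$; everything else is bookkeeping that follows at once from Lemma \ref{lemma:conjugaison-MASAs}. I do not expect any genuine obstacle.
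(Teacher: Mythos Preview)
Your proposal is correct and follows essentially the same route as the paper: the paper's proof is a direct re-derivation of the computation in Lemma~\ref{lemma:conjugaison-MASAs} specialized to $\mathfrak B_1=\mathfrak B_2=\mathbb R[M]$, which is exactly what you obtain by citing that lemma. Your explicit dimension-count justification for the equivalence $\phi\,\mathbb R[M]\,\phi^{-1}=\mathbb R[M]\iff\phi M\phi^{-1}\in\mathbb R[M]$ via nonderogatoricity is a nice addition that the paper merely asserts, and your derivation of $\psi^{-1}$ by solving componentwise is fine (the paper does not spell this out).
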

\begin{proof}As the derived ideal $[\mathcal G_M,\mathcal G_M]=\mathbb R^n$ is preserved by any automorphism $\psi$, we set $\psi(a)=\psi_{1,1}(a) +\psi_{1,2} (a)$ and $\psi(x)=\phi(x)$ for any $a\in\mathbb R[M]$ and $x\in\mathbb R^n$, where $\psi_{1,1}:\mathbb R[M]\to\mathbb R[M]$,  $\psi_{1,2}:\mathbb R[M]\to\mathbb R^n$,  $\phi:\mathbb R^n \to\mathbb R^n$ are linear maps, with $\psi_{1,1}$ and $\phi$ invertible.  From the equality $\phi(ax)=\psi([a,x])= [\psi_{1,1} (a)+\psi_{1,2}(a),\phi(x)] = \Big(\psi_{1,1} (a)\circ\phi\Big)(x)$, we deduce $\psi_{1,1}(a)=\phi \circ a \circ \phi^{-1}$. In particular $\phi \circ a \circ \phi^{-1}\in\mathbb R[M]$, for any $a\in\mathbb R[M]$, which is equivalent to  $\phi \circ M \circ \phi^{-1}\in\mathbb R[M]$. Taking $b=e_1$ and $x_0:=\psi_{1,2}(e_1)$ in the equality
 $0=\psi([a,b])= [\psi_{1,1} (a)+\psi_{1,2}(a),\psi_{1,1} (b)+\psi_{1,2}(b)]=\psi_{1,1} (a)\psi_{1,2}(b)- \psi_{1,1} (b) \psi_{1,2}(a)$ yields $\psi_{1,2} (a)=\phi\circ a\circ\phi^{-1}x_0$, for any $a\in\mathbb R[M].$
\end{proof}
\subsection{Proof of Theorem \ref{thm:cartansubalgebras}}\label{proofofCartan-subalgebras-sl(n,R)} 
From Proposition \ref{prop:abeliannilrad}, for a given $n$, the number of isomorphism classes of $2$-step solvable Frobenius Lie algebras 
of dimension $2n$  of the form $\mathcal G_M :=\mathbb R[M]\ltimes \mathbb R^n$, where $\mathbb R[M]$ is a Cartan subalgebra of $\mathfrak{gl}(n,\mathbb R),$  is
 exactly $[\frac{n}{2}]+1$. Indeed, one can look at $[\frac{n}{2}]+1$ as the number (counting from zero)
of possible copies of $\mathfrak{aff}(\mathbb C)$ that one can count in a decomposable Lie algebra 
containing only copies of either $\mathfrak{aff}(\mathbb C)$ or $\mathfrak{aff}(\mathbb R)$.
On the other hand, from e.g. \cite{sugiura}, there are exactly $[\frac{n}{2}]+1$ non-conjugate Cartan subalgebras of 
$\mathfrak{gl}(n,\mathbb R).$ So we have derived Theorem \ref{thm:cartansubalgebras}, in a simple and direct way. More precisely we prove it as follows.
\begin{proof} Let $\mathfrak{h}$ be a Cartan subalgebra of
 $\mathfrak{sl}(n,\mathbb R)$ with a $k$-dimensional toroidal part. From \cite{sugiura}, up to conjugacy under an element of the Weyl group, there is a basis ($\tilde e_1,\dots, \tilde e_n$), considered here as the canonical basis of $\mathbb R^n$, in which   $\mathfrak{h}$ is of the form
$\mathfrak{h}=\Big\{\begin{pmatrix} D_1&-D_2&{\mathbf 0}\\
D_2 &D_1&{\mathbf 0}\\
{\mathbf 0}&{\mathbf 0}&D_3
\end{pmatrix}\Big\}$, where $D_1=\text{diag}(h_1,\dots,h_{k})$, 
$D_2=\text{diag}(h_{k+1},\dots,h_{2k})$
, $D_3=\text{diag}(h_{2k+1},\dots,h_{n})$ with $ h_j\in\mathbb R, j=1,\dots,n$, so that $\mathfrak{h}$ is conjugate to the algebra 
$\mathfrak{h}'=\Big\{\text{diag}(D_1',\dots,D_k', D_3),$  with  
$D_j'=\begin{pmatrix} { h_j} & -h_{k+j} \\
h_{k+j} &{ h_j}
\end{pmatrix}$ and $ D_3=\text{diag}(h_{2k+1},\dots,h_{n}), 
 h_j\in\mathbb R, j=1,\dots,n\;\Big\}$,
 obtained by reordering the canonical basis of 
$\mathbb R^n$ into
 $(\tilde e_1, \tilde e_{k+1}, \tilde e_2, \tilde e_{k+2}, \dots,\tilde e_k,
 \tilde e_{2k},\tilde e_{2k+1},\tilde e_{2k+2},\dots,\tilde e_{n}).$ A matrix in $\mathfrak{h}'$ is
 nonderogatory if and only if $(h_i,h_{k+i}) \neq (h_j,h_{k+j})$, whenever $i\neq j$ and $h_{2k+s}\neq h_{2k+l}$ whenever $s\neq l.$ But the existence of a matrix (a regular element) satisfying these
conditions are guaranteed by the fact that $\mathfrak{h}$ is a Cartan subalgebra. Hence, there exists a nonderogatory matrix $M$ with the $n$ distinct eigenvalues $h_1+ih_{k+1}, h_1-ih_{k+1},\dots , h_k+ih_{2k}, h_k-ih_{2k},$ $h_{2k+1},\dots, h_{n}$, such that , up to a conjugation, $\mathbb I_{\mathbb R^n}\oplus\mathfrak{h}=\mathbb R[M]=\mathfrak{B}_{\mathfrak{h}}$. So (a) implies (b).   Furtheremore, Theorem \ref{thm:classification} ensures that $\mathfrak{B}_{\mathfrak{h}}\ltimes \mathbb R^n$ is isomorphic to the direct sum $\mathfrak{aff}(\mathbb C)\oplus\dots\oplus\mathfrak{aff}(\mathbb C)\oplus\mathfrak{aff}(\mathbb R)\oplus\dots\oplus\mathfrak{aff}(\mathbb R)$ of $k$ copies of $\mathfrak{aff}(\mathbb C)$ and $(n-k)$ of $\mathfrak{aff}(\mathbb R)$. In fact, the equivalence between (b) and (c) has already been proved  by Theorem \ref{thm:classification}. 
Conversely, suppose $M$ is a $n\times n$ nonderogatory  real matrix with $n$ distinct eigenvalues, 2k of which are complex. From Proposition \ref{prop:abeliannilrad}, $\mathbb R[M]$ is a Cartan subalgebra of $\mathfrak{gl} (n,\mathbb R)$ and so (b) implies (a). Furthermore, using the Primary Decomposition Theorem and results from Section \ref{Sect:complexeigenvalues}, then $\mathbb R[M]$ can be put in the form $\mathfrak{h}'$.
\end{proof}

\section{On low dimensional 2-step solvable Frobenius Lie algebras}\label{sect:6D}
Applying Theorem \ref{thm:classification} to low dimensions, we obtain the following. In dimension 2, there is a unique  such Lie algebra, namely $\mathfrak{D}_0^1=\mathfrak{aff}({\mathbb R})$.  In dimension $4,$ the  Lie algebras of the form $\mathcal G_M$ are $\mathfrak{D}_0^2$, $\mathfrak{aff}({\mathbb C})$
and $\mathfrak{aff}({\mathbb R})\oplus \mathfrak{aff}({\mathbb R})$. They correspond to the family PHC7 of \cite{4d-para-hypercomplex} 
and to S11, S8 and S10 respectively in \cite{snow}.
In dimension 6, they are  
$\mathfrak{D}_0^3$, $\mathfrak{D}_0^2 \oplus\mathfrak{aff}({\mathbb R})$, 
$\mathfrak{aff}({\mathbb C}) \oplus \mathfrak{aff}({\mathbb R})$
and 
$\mathfrak{aff}({\mathbb R})\oplus \mathfrak{aff}({\mathbb R}) \oplus \mathfrak{aff}({\mathbb R})$. 
Theorem \ref{thm:classification-D6} below provides, up to isomorphism, a complete list of all 2-step solvable Frobenius Lie algebras of dimension $4$ or $6.$

\begin{theorem} \label{thm:classification-D6} A 2-step solvable Frobenius Lie algebra of dimension $2n\leq 6$ is either of the form  $\mathcal G_\psi =\mathbb R[\psi]\ltimes \mathbb R^n$  for some nonderogatory endomorphism $\psi$ of $\mathbb R^n,$ or isomorphic to $\mathcal G_{3,1}$ as in Example \ref{example1-3D}. 
Consequently, a 2-step solvable Frobenius Lie algebra of dimension less or equal to $6$ is either isomorphic to
 $\mathfrak{aff}({\mathbb R})$, $\mathfrak{D}_0^2$, $\mathfrak{aff}({\mathbb C})$, $\mathfrak{D}_0^3$, $\mathcal G_{3,1}$, or to one of the direct sums 
$\mathfrak{aff}({\mathbb R})\oplus \mathfrak{aff}({\mathbb R})$,  
 $\mathfrak{D}_0^2 \oplus\mathfrak{aff}({\mathbb R})$, 
$\mathfrak{aff}({\mathbb C}) \oplus \mathfrak{aff}({\mathbb R})$, 
$\mathfrak{aff}({\mathbb R})\oplus \mathfrak{aff}({\mathbb R}) \oplus \mathfrak{aff}({\mathbb R})$ of their copies. 
\end{theorem}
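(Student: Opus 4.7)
The plan is to apply Theorem \ref{thm:structure-2-step-Frobenius} to realize any $2n$-dimensional $2$-step solvable Frobenius Lie algebra as $\mathfrak{B}\ltimes\mathbb R^n$ for some $n$-dimensional Abelian subalgebra $\mathfrak{B}\subset\mathfrak{gl}(n,\mathbb R)$ whose contragredient action has an open orbit on $(\mathbb R^n)^*$. By Theorem \ref{theorem:MASAs} such a $\mathfrak{B}$ is a MASA of $\mathfrak{gl}(n,\mathbb R)$ (and hence contains $\mathbb I_{\mathbb R^n}$), and by Lemma \ref{lemma:conjugaison-MASAs} two such Frobenius Lie algebras are isomorphic iff the underlying MASAs are $GL(n,\mathbb R)$-conjugate. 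The statement splits into three cases, $n=1,2,3$, with the last one carrying all the difficulty.

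The cases $n=1,2$ are immediate. For $n=1$ the unique such Abelian subalgebra is $\mathbb R\,\mathbb I_{\mathbb R}=\mathbb R[\mathbb I_{\mathbb R}]$, giving $\mathfrak{aff}(\mathbb R)=\mathfrak{D}_0^1$. For $n=2$, writing $\mathfrak{B}=\mathbb R\,\mathbb I_{\mathbb R^2}\oplus L$ with $\dim L=1$, any nonzero $M\in L$ is nonscalar, hence nonderogatory (its minimal polynomial has degree $2$), so $\mathfrak{B}=\mathbb R[M]$ and Theorem \ref{thm:classification} yields the three classes $\mathfrak{aff}(\mathbb R)\oplus\mathfrak{aff}(\mathbb R)$, $\mathfrak{D}_0^2$ and $\mathfrak{aff}(\mathbb C)$ according to whether $M$ has two distinct real eigenvalues, a single real eigenvalue, or a complex conjugate pair. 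For $n=3$, if $\mathfrak{B}$ contains a nonderogatory matrix $M$, then Lemma \ref{nonderogatorymatrices} gives $\dim C(M)=3$, so $\mathfrak{B}\subseteq C(M)=\mathbb R[M]$ forces $\mathfrak{B}=\mathbb R[M]$, and Theorem \ref{thm:classification} produces the four types $\mathfrak{D}_0^3$, $\mathfrak{D}_0^2\oplus\mathfrak{aff}(\mathbb R)$, $\mathfrak{aff}(\mathbb C)\oplus\mathfrak{aff}(\mathbb R)$ and $\mathfrak{aff}(\mathbb R)\oplus\mathfrak{aff}(\mathbb R)\oplus\mathfrak{aff}(\mathbb R)$ listed in the statement.

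The hard part is to show that, when $n=3$, a MASA $\mathfrak{B}=\mathbb R\,\mathbb I_{\mathbb R^3}\oplus L$ with $L\subset\mathfrak{sl}(3,\mathbb R)$ containing no nonderogatory matrix is conjugate to $\mathfrak{B}_{3,1}$. First, any derogatory matrix in $\mathfrak{gl}(3,\mathbb R)$ has at most two distinct eigenvalues, which moreover must all be real (a complex conjugate pair would give a third distinct eigenvalue, forcing nonderogatoriness). If some $M\in L$ had two distinct real eigenvalues, then up to conjugation $M=\mu\,\mathrm{diag}(1,1,-2)$ with $\mu\ne 0$ and $C(M)=\mathfrak{gl}(2,\mathbb R)\oplus\mathfrak{gl}(1,\mathbb R)$; a direct inspection of the three conjugacy classes of $3$-dimensional MASAs of this centralizer (diagonal, upper triangular, rotation) shows that each contains a nonderogatory matrix (for instance $\mathrm{diag}(2,2,1)+E_{1,2}$ in the upper triangular case), contradicting the hypothesis. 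Hence every $M\in L$ is nilpotent. Since a nilpotent $3\times 3$ matrix is nonderogatory iff its square is nonzero, the hypothesis further forces $M^2=0$ for every $M\in L$.

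By Engel's theorem we may then conjugate $L$ inside the strictly upper triangular subalgebra $\mathfrak{n}_3$, whose generic element reads $A=aE_{1,2}+bE_{2,3}+cE_{1,3}$ and satisfies $[A,A']=(ab'-a'b)E_{1,3}$. The commutativity of $L$ together with $\dim L=2$ force $E_{1,3}\in L$ and $L=\mathbb R(a_0E_{1,2}+b_0E_{2,3})\oplus\mathbb R E_{1,3}$ for some $(a_0,b_0)\ne (0,0)$. The condition $M^2=0$ for every $M\in L$ reduces, via $(a_0E_{1,2}+b_0E_{2,3})^2=a_0b_0 E_{1,3}$, to $a_0b_0=0$. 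The subcase $b_0=0$ gives exactly $\mathfrak{B}=\mathfrak{B}_{3,1}$, while the subcase $a_0=0$ gives the MASA $\mathfrak{B}_3$ of Remark \ref{rem:masa-no-frobenius}, whose orbit calculation there shows it has no open contragredient orbit and is therefore excluded. This proves the first assertion, and the enumeration in the second assertion follows by combining the nonderogatory classes produced by Theorem \ref{thm:classification} in dimensions $2,4,6$ with the extra algebra $\mathcal G_{3,1}$ in dimension $6$.
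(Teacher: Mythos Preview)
Your proof is correct and takes a genuinely different route from the paper's in the $n=3$ case. The paper quotes the Winternitz classification of MASAs of $\mathfrak{sl}(3,\mathbb R)$ into six conjugacy classes $L_{2,1},\dots,L_{2,6}$ and then checks, class by class, that $L_{2,1},L_{2,2},L_{2,3},L_{2,6}$ are each contained in $\mathbb R[S]$ for an explicit nonderogatory $S$, that $L_{2,5}$ gives $\mathfrak B_{3,1}$, and that $L_{2,4}$ has no Frobenius functional. You instead give a self-contained structural argument: if $\mathfrak B$ contains a nonderogatory element you are done by Lemma~\ref{nonderogatorymatrices} and Theorem~\ref{thm:classification}; otherwise you force every element of $L$ to be nilpotent with square zero (ruling out two distinct real eigenvalues via the centralizer argument, and ruling out complex eigenvalues by a degree count), then use Engel's theorem to land in $\mathfrak n_3$ and read off $L=\mathrm{span}(E_{1,2},E_{1,3})$ or $\mathrm{span}(E_{2,3},E_{1,3})$, the latter being excluded by Remark~\ref{rem:masa-no-frobenius}. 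Your approach has the advantage of not relying on an external MASA classification and of making transparent exactly why $\mathcal G_{3,1}$ is the unique non-nonderogatory case; the paper's approach, in exchange, gives an explicit dictionary between the Winternitz list and the Frobenius Lie algebras, and verifies the exclusion of $L_{2,4}$ by a direct $(\partial\alpha)^3=0$ computation rather than by invoking the orbit calculation of Remark~\ref{rem:masa-no-frobenius}.
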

\begin{proof} The case $n=1$ is trivial, as every 2-dimensional non-Abelian Lie algebra is isomorphic to $\mathfrak{aff}(\mathbb R) =\mathcal G_\psi$, where $\psi=\mathbb I_{\mathbb R}$, as in Example \ref{ex:aff(R)}.
 Let $\mathcal G $ be a 2-step solvable Lie algebra of dimension $2n$, with $2\leq n\leq 3.$ Write $\mathcal G=\mathcal A\ltimes \mathbb R^n$ where $\mathcal A$ is an $n$-dimensional Abelian subalgebra  of $\mathfrak{gl}(n,\mathbb R)$. Note that, when $n\le 3,$ then $[\frac{n^2}{4}]+1=n$, so that from Jacobson's theorem  \cite{jacobson-schur}, every $n$-dimensional Abelian subalgebra of $\mathfrak{gl}(n,\mathbb R)$ is a MASA. Furthermore, as $\mathcal A$ contains the identity matrix $\mathbb I_{\mathbb R^n}$, we use the decomposition $\mathcal A= \mathbb R \mathbb I_{\mathbb R^n} \oplus L$ of $\mathcal A$ into a direct sum of the line 
$\mathbb R \mathbb I_{\mathbb R^n} $ and a MASA  $L$ of $\mathfrak{sl}(n,\mathbb R)$. 
When $n=2,$ then $L$ is 1-dimensional, namely $L$ is the line $L=\mathbb R M$, for some nonzero element $M$ of $\mathfrak{sl} (2,\mathbb R)$. But every nonzero element of  $\mathfrak{sl} (2,\mathbb R)$ is nonderogatory. 
Thus, $\mathcal A=\mathbb R \mathbb I_{\mathbb R^2} \oplus \mathbb R M= \mathbb R [M]$ and  $\mathcal A\ltimes\mathbb R ^2 = \mathcal G_M$.
According to Theorem \ref{thm:classification}, a 4-dimensional Frobenius Lie algebra $\mathcal G_M$ given by a nonderogatory matrix $M$, is isomorphic to $\mathfrak{D}_0^2$, $\mathfrak{aff} (\mathbb C)$, or $\mathfrak{aff}(\mathbb R)\oplus\mathfrak{aff}(\mathbb R)$.
 In the case where $n=3,$ referring  to the classification list quoted in \cite{winternitz}, there are  six classes of non-mutually conjugate MASAs   of $\mathfrak{sl}(3,\mathbb R)$. With the same notation as in  \cite{winternitz}, we denote them by  $L_{2,i}$, $i=1,\dots,6.$
The first one is the Lie algebra $L_{2,1}$  of diagonal matrices of the form  diag($k_1+k_2, -k_1+k_2,-2k_2)$, where $k_1,k_2\in\mathbb R.$  Any matrix in  $L_{2,1}$ is a polynomial in the nonderogatory matrix   $S_{2,1}=$diag$(1,0,-1).$ More precisely  diag($k_1+k_2, -k_1+k_2,-2k_2)=(k_2-k_1)S_{2,1}^0 +\frac{1}{2}(k_1+3k_2) S_{2,1} +\frac{3}{2}(k_1-k_2) S_{2,1}^2$.  So $L_{2,1}$ is a 2-dimensional subalgebra of the algebra $\mathbb R[ S_{2,1}]$ of polynomials in $S_{2,1}$
and we have the equality $\mathcal A_{2,1}= \mathbb R \mathbb I_{\mathbb R^n} \oplus L_{2,1}= \mathbb R[ S_{2,1}].$ 
 Theorem \ref{thm:classification} asserts that, since  $S_{2,1}$ has three distinct eigenvalues,  the corresponding Lie algebra $\mathcal A_{2,1}\ltimes \mathbb R^3= \mathcal G_{ S_{2,1} }$ is isomorphic to the direct sum $ \mathfrak{aff}(\mathbb R)\oplus \mathfrak{aff}(\mathbb R)\oplus \mathfrak{aff}(\mathbb R).$
Every element of the second class $L_{2,2}:=\Big\{k_1(E_{1,1}+E_{2,2}-2E_{3,3}) +k_2(E_{1,2}-E_{2,1}), k_1, k_2\in\mathbb R \Big\},$
  is a nonderogatory matrix with one real eigenvalue, namely the real $-2k_1$ and two complex conjugate eigenvalues $k_1+ik_2$ and $k_1-ik_2,$ except when $k_2=0.$ 
For instance, let $ S_{2,2}:=E_{1,2}-E_{2,1}$be  the nonderogatory matrix in $L_{2,2}$ corresponding to $k_1=0, k_2=1$; Every matrix in  $L_{2,2}$ is of the form 
$-2k_1  S_{2,2}^0+ k_2  S_{2,2}-3k_1  S_{2,2}^2.$ So  $L_{2,2}$  is a 2-dimensional subalgebra of $\mathbb R[ S_{2,2}]$ and the space $\mathcal A_{2,2} = \mathbb R \mathbb I_{\mathbb R^3} \oplus L_{2,2} $ is equal to $\mathbb R[ S_{2,2}]$ and the Lie algebra $\mathcal A_{2,2}\ltimes \mathbb R^3$ is isomorphic to $\mathcal G_{ S_{2,2} }.$ 
Theorem \ref{thm:classification} ensures that such a Lie algebra is isomorphic to the direct sum $ \mathfrak{aff}(\mathbb C)\oplus \mathfrak{aff}(\mathbb R).$
Next, we note that every element of $L_{2,3}:=\Big\{k_1(E_{1,1}+E_{2,2}-2E_{3,3})+k_2E_{1,2}, k_1,k_2\in\mathbb R \Big\}$
  is of the form $k_1S_{2,3}^0+ k_2 S_{2,3} - (3k_1+k_2)S_{2,3}^2$ where $S_{2,3}$ stands for the nonderogatory matrix
$S_{2,3}:=E_{1,2}+E_{3,3}$ 
with the double eigenvalue $0$ and the simple eigenvalue 1. Hence, the algebra $L_{2,3}$ is a 2-dimensional subalgebra of 
 $\mathbb R[S_{2,3}]$ and thus, the equality $\mathcal A_{2,3}= \mathbb R \mathbb I_{\mathbb R^n} \oplus L_{2,3}=\mathbb R[ S_{2,3}]$ entails the needed equality 
 $\mathcal A_{2,3}\ltimes \mathbb R^3 = \mathcal G_{ S_{2,3} }.$ According to Theorem \ref{thm:classification}, the  latter Lie algebra is isomorphic to $\mathfrak{aff}(\mathbb R)\oplus \mathfrak{D}_0^2.$ 
As regards  $L_{2,5}:=\Big\{k_1E_{1,2}+k_2E_{1,3}, k_1,k_2\in\mathbb R\Big\}$, 
the space $\mathcal A_{2,5}=\mathbb R\mathbb I_{3}\oplus L_{2,5}$
 corresponds to $\mathfrak{B}_{3,1}$ in Examples \ref{example1-3D}, \ref{example-general-nD}, it 
contains no nonderogatory matrix and  the Lie algebra  $\mathcal A_{2,5}\ltimes\mathbb R^3=\mathfrak{B}_{3,1}\ltimes \mathbb R^3=\mathcal G_{3,1}$ is nondecomposable. 
Every element of the algebra  $L_{2,6}:=\Big\{k_1(E_{1,2}+E_{2,3})+k_2E_{1,3}, k_1,k_2\in\mathbb R\Big\}$  
is a polynomial in the nonderogatory matrix  $M_{0}=E_{1,2}+E_{2,3}$ 
 with  $0$  as a unique eigenvalue of multiplicity 3. Thus, as above, we conclude that $\mathcal A_{2,6}=\mathbb R[M_0]$, so that  $\mathcal A_{2,6}\ltimes \mathbb R^3 = \mathcal G_{ M_{0} } =\mathfrak{D}_0^3$  (see Theorem \ref{thm:classification}).  
It is easy to see that, for the remaining algebra  
 $L_{2,4}:=\Big\{k_{1,3}E_{1,3}+k_{2,3}E_{2,3},k _{1,3},k_{2,3}\in\mathbb R\Big\}$
  in the list in  \cite{winternitz},  if we set  $\mathcal A_{2,4}= \mathbb R \mathbb I_{\mathbb R^n} \oplus L_{2,4}$, then there is no Frobenius functional on  the Lie algebra $\mathcal A_{2,4} \ltimes \mathbb R^3$, as  every linear form $\alpha$ satisfies $(\partial\alpha)^3=0.$  
Indeed, in the basis $e_1:=\mathbb I_{\mathbb R^3}$, $e_2:=E_{1,3}$, $e_{3}:=E_{2,3}$, $e_{3+j} =\tilde e_j$, $j=1,2,3,$ the Lie bracket reads

\noindent
$[e_1,e_{3+j}]=e_{3+j}$\;, $j=1,2,3,$
$[e_2,e_{6}]=e_{4}$\,, $[e_3,e_{6}]=e_{5}$\;, \; 
so $\partial e_1^* = \partial e_2^* = \partial e_3^* = 0$\;,\; 
$\partial e_4^* = - e_1^*\wedge e_4^*  - e_2^*\wedge e_6^* $\; , \;   $\partial e_5^* = - e_1^*\wedge e_5^*-  e_3^*\wedge e_6^* $\; ,\; 
 $\partial e_6^* = - e_1^*\wedge e_6^* $   and 
for any $s_j\in\mathbb R$, $j=1,\dots,6$, setting $\alpha=s_1e_1^*+\dots+s_6e_6^*$, we have
\begin{eqnarray}\partial \alpha&=& \partial (s_4e_4^*+s_5 e_5^*+s_6 e_6^*) = -s_4( e_1^*\wedge e_4^*  + e_2^*\wedge e_6^*)-s_5( e_1^*\wedge e_5^*-  e_3^*\wedge e_6^*)\nonumber\\
&&-s_6e_1^*\wedge e_6^* =
- e_1^*\wedge (s_4 e_4^*  + s_5 e_5^* + s_6 e_6^*)   
-(s_4e_2^* + s_5 e_3^*) \wedge e_6^*\;.
\end{eqnarray}
This is of the form $\partial \alpha=e_1^*\wedge \beta+\gamma\wedge e_6^*$. Hence $(\partial \alpha)^3=0.$ See also Remark \ref{rem:masa-no-frobenius}.
 \end{proof}

\section{Some concluding remarks}\label{sect:remarks}
Our discussions in this paper relates to three problems,  the classification of $2$-step solvable Frobenius Lie algebras, the classification of maximal Abelian subalgebras (MASAs) of  the  Lie algebra of square matrices and the Gerstenhaber's Theorem. We have proposed a condition under which Gerstenhaber's Theorem is valid for any number of commuting matrices. In particular, under such a  condition, Gerstenhaber's Theorem is valid for a set of 3 commuting matrices. We have also shown that every 2-step solvable Frobenius  Lie algebra $\mathcal G$  is a semidirect sum  $\mathcal G=\mathfrak B\ltimes \mathbb R^n$ of two Abelian Lagrangian subalgebras $\mathfrak B$ and $\mathbb R^n,$ where 
 $ \mathbb R^n$   is the derived ideal  $\mathbb R^n=[\mathcal G,\mathcal G]$ and $\mathfrak B$ is an n-dimensional MASA of  $\mathfrak{gl}(n,\mathbb R)$, such that the action $\mathfrak B\times (\mathbb R^n)^* \to (\mathbb R^n)^*,$ $(a,f)\mapsto -f\circ a$,  has an open orbit. 
Yet interestingly, we have also shown that if an $n$-dimensional Abelian subalgebra $\mathfrak{B}$ of $n\times n$ matrices over $\mathbb K$, has an open orbit on   $(\mathbb K^n)^*,$ for the above action, then $\mathfrak{B}$ is a MASA of  $\mathfrak{gl}(n,\mathbb K).$ 
Moreover,  $\mathcal A\ltimes \mathbb R^n$  and  $\mathfrak B\ltimes \mathbb R^n$  are isomorphic if and only if   $\mathcal A$ and  $\mathfrak B$ are conjugate. That is, $\mathfrak B=\phi \mathcal A \phi^{-1} $, for some invertible linear map $\phi: \mathbb R^n\to\mathbb R^n$. So the classification of 2-step solvable Frobenius Lie algebras is equivalent to that of  n-dimensional MASAs of $\mathfrak{gl}(n,\mathbb R)$, for which the above action has some open orbit. We have also given a complete characterization of the corresponding LSA. New characterizations of Cartan subalgebras of $\mathfrak{sl}(n,\mathbb R)$ are also given.
In any dimension, we have performed a complete classification of all $2$-step solvable Frobenius Lie algebras corresponding to nonderogatory  real matrices. 
We have also discussed examples corresponding to $n$ non-isomorphic $2$-step solvable Frobenius Lie algebras which are not given by nonderogatory matrices, in any dimension $n$. 
In low dimensions, we have classified all $2$-step solvable Frobenius Lie algebras up to dimension $6$.

\end{document}